\documentclass[3p]{elsarticle}

\makeatletter
\def\ps@pprintTitle{%
    \let\@oddhead\@empty
    \let\@evenhead\@empty
    \let\@evenfoot\@oddfoot
    }
\makeatother
\usepackage{tabularx}
\usepackage{amsmath}
\usepackage{amssymb}
\usepackage{amsthm}
\usepackage{mathrsfs}
\usepackage{bm}
\usepackage{graphicx}
\usepackage[final]{hyperref}
\usepackage[ruled,vlined]{algorithm2e}
\usepackage{enumitem}
\usepackage{array}
\usepackage{multirow}
\usepackage{makecell}
\usepackage[title]{appendix}
\usepackage{empheq}
\usepackage{cleveref}
\usepackage{cases}
\usepackage{textcomp}
\usepackage{gensymb}
\usepackage{subcaption}
\hypersetup{
	colorlinks=true,       
	linkcolor=blue,        
	citecolor=blue,        
	filecolor=magenta,     
	urlcolor=blue         
}
\usepackage{float}
\usepackage{multirow}

\usepackage{regexpatch}

\usepackage{tcolorbox} 
\usepackage{xcolor} 
\newcommand{\vect}[1]{\boldsymbol{\mathbf{#1}}}
 
\newcommand{\R}{\mathbb{R}}

\newcommand{\norm}[1]{\left\lVert #1 \right\rVert}

\newcommand{\grad}{\nabla}

\DeclareMathOperator*{\esssup}{ess\,sup}

\newtheorem{theorem}{Theorem}

\newtheorem{corollary}[theorem]{Corollary}
\newtheorem{remark}{Remark}

\newcommand{\noise}{\boldsymbol{N}}

\newcommand{\bdmc}[1]{\boldsymbol{\mathcal{#1}}}
\newcommand{\as}{\text{a.s.}}
\newcommand{\expect}[2]{\mathbb{E}_{#1}\left[#2 \right]}


\newcommand{\dd}{\,\textup{d}}

\newcommand{\bzero}{\bm{0}}
\newcommand{\bcolon}{\bm{:}}

\newcommand{\be}{\bm{e}}

\newcommand{\bn}{\bm{n}}

\newcommand{\bt}{\bm{t}}
\newcommand{\bu}{\bm{u}}
\newcommand{\bv}{\bm{v}}
\newcommand{\bw}{\bm{w}}
\newcommand{\bx}{\bm{x}}
\newcommand{\by}{\bm{y}}

\newcommand{\bC}{\bm{C}}

\newcommand{\bF}{\bm{F}}

\newcommand{\bI}{\bm{I}}

\newcommand{\bN}{\bm{N}}

\newcommand{\bS}{\bm{S}}

\newcommand{\bX}{\bm{X}}
\newcommand{\bY}{\bm{Y}}

\newcommand{\bchi}{{\bm{\chi}}}


\newcommand{\calM}{{\mathcal{M}}}


\newcommand{\operator}{\widetilde{\mathcal{F}}}

\newcommand{\dualDot}[2]{\langle {#1}, {#2}\rangle}

\newcommand{\post}[1]{\nu_{M|\bY}\left(#1|\by^*\right)}
\newcommand{\postt}[1]{\widetilde{\nu_{M|\bY}}\left(#1|\by^*\right)}
\newcommand*\tcircle[1]{%
  \raisebox{-0.5pt}{%
    \textcircled{\fontsize{7pt}{0}\fontfamily{phv}\selectfont #1}%
  }%
}

\begin{document}
\begin{frontmatter}
\title{Residual-Based Error Correction for Neural Operator Accelerated Infinite-Dimensional Bayesian Inverse Problems}
\author{Lianghao Cao\corref{cor1}}
\ead{lianghao@oden.utexas.edu}
\author{Thomas O'Leary-Roseberry}
\ead{tom.olearyroseberry@utexas.edu}
\author{Prashant K. Jha}
\ead{prashant.jha@austin.utexas.edu}
\author{J. Tinsley Oden}
\ead{oden@oden.utexas.edu}
\author{Omar Ghattas}
\ead{omar@oden.utexas.edu}

\address{Oden Institute for Computational Sciences and Engineering, The University of Texas at Austin, 201 E. 24th Street, C0200, Austin, TX 78712, United States of America.
}

\cortext[cor1]{Corresponding authors}

\begin{abstract}
We explore using neural operators, or neural network representations of nonlinear maps between function spaces, to accelerate infinite-dimensional Bayesian inverse problems (BIPs) with models governed by nonlinear parametric partial differential equations (PDEs). Neural operators have gained significant attention in recent years for their ability to approximate the parameter-to-solution maps defined by PDEs using as training data solutions of PDEs at a limited number of parameter samples. The computational cost of BIPs can be drastically reduced if a large number of PDE solves required for posterior characterization are replaced with evaluations of trained neural operators. However, reducing error in the resulting BIP solutions via reducing the approximation error of the neural operators in training can be challenging and unreliable. We provide an \textit{a priori} error bound result that implies certain BIPs can be ill-conditioned to the approximation error of neural operators, thus leading to inaccessible accuracy requirements in training. To reliably deploy neural operators in BIPs, we consider a strategy for enhancing the performance of neural operators, which is to correct the prediction of a trained neural operator by solving a linear variational problem based on the PDE residual. We show that a trained neural operator with error correction can achieve a quadratic reduction of its approximation error, all while retaining substantial computational speedups of posterior sampling when models are governed by highly nonlinear PDEs. The strategy is applied to two numerical examples of BIPs based on a nonlinear reaction--diffusion problem and deformation of hyperelastic materials. We demonstrate that posterior representations of the two BIPs produced using trained neural operators are greatly and consistently enhanced by error correction.
\end{abstract}

\begin{keyword}
uncertainty quantification, partial differential equations, machine learning, neural networks, operator learning, error analysis
\end{keyword}

\end{frontmatter}

\tableofcontents

\section{Introduction}\label{sec:intro}
Many mathematical models of physical systems are governed by parametric partial differential equations (PDEs), where the \textit{states} of the systems are described by spatially and/or temporally--varying functions of PDE solutions, such as the evolution of temperature fields modeled by the heat equation and material deformation modeled by the nonlinear elasticity equation. The parameters, such as thermal conductivity and Young's modulus, of these models, often characterize properties of the physical systems and cannot be directly determined; one has to solve \textit{inverse problems} for that purpose, where the parameters are inferred from discrete and noisy observations of the states. To account for uncertainties in observations and our prior knowledge of the parameters, represented by \textit{prior probability distributions}, in the solutions of inverse problems, they are often formulated via Bayes' rule, or as \textit{Bayesian inverse problems}, for which solutions are probability distributions of the parameters conditioned on the observations, or \textit{posterior probability distributions}. In some scenarios, our prior knowledge of the parameters requires them to be treated as functions, leading to \textit{infinite-dimensional Bayesian inverse problems}. These scenarios arise, for example, when the parameters are possibly spatially varying with uncertain spatial structures. Bayesian inverse problems are fundamental to constructing predictive models~\cite{Jaynes2003, Biegler2010, Oden2017, Ghattas2021}, and the need for inferring parameters as functions can be found in many areas of engineering, sciences, and medicine~\cite{wang2005using, Isaac2015, Zhu2016,  Alghamdi2020, Chen2021, liang2022bayesian}.

For models governed by large-scale highly nonlinear parametric PDEs, numerical simulations are computationally expensive as it involves solving high-dimensional linear systems in an iterative manner many times to obtain solutions with desired accuracy~\cite{Kelley1995}. In these cases, solving infinite-dimensional Bayesian inverse problems can be intractable, as numerically approximating infinite-dimensional posterior distributions with complex structures requires an untenable number of numerical solutions at different parameters, i.e., these problems suffer from the \textit{curse of dimensionality}. Many mathematical and numerical techniques are developed to mitigate the computational burden of these problems. Examples of these techniques are (i) advanced sampling methods exploiting the intrinsic low-dimensionality~\cite{cui2016dimension, constantine2016accelerating} or derivatives~\cite{stuart2004conditional, martin2012stochastic, bui2014solving} of posterior distributions, (ii) direct posterior construction and statistical computation via Laplace approximation~\cite{Bui2013,schillings2020on}, deterministic quadrature~\cite{schillings2013sparse, gantner2016computational}, or transport maps~\cite{parno2016multiscale, chen2019projected, zech2022sparse, Wang2021}, and (iii) surrogate modeling using polynomial approximation~\cite{marzourk2007stochastic,marzouk2009dimensionality} or model order reduction~\cite{galbally2010nonlinear, lieberman2010parameter,cui2015data} combined with multilevel or multifidelity methods~\cite{Dodwell2015, Teckentrup2015, Peherstorfer2018}.

Neural operators, or neural network representations of nonlinear maps between function spaces, have gained significant interest in recent years for their ability to represent the parameter-to-state maps defined by nonlinear parametric PDEs, and approximate these maps using a limited number of PDE solutions at samples of different parameters \cite{BhattacharyaHosseiniKovachkiEtAl2020,FrescaManzoni2022,KovachkiLiLiuEtAl2021,LiKovachkiAzizzadenesheliEtAl2020a,LiKovachkiAzizzadenesheliEtAl2020b,LuJinKarniadakis2019,OLearyRoseberryVillaChenEtAl2022,OLearyRoseberryDuChaudhuriEtAl2021,RaissiPerdikarisKarniadakis2019,WangWangPerdikaris2021,YuLuMengEtAl2022,Hesthaven2018}. Notable neural operators include POD-NN~\cite{Hesthaven2018}, DeepONet~\cite{LuJinKarniadakis2019}, Fourier neural operator~\cite{LiZhengKovachkiEtAl2021}, and derivative-informed reduced basis neural networks~\cite{OLearyRoseberryVillaChenEtAl2022}. The problem of approximating nonlinear maps is often referred to as the \textit{operator learning problem}, and numerically solving the operator learning problem by optimizing the neural network weights is referred to as \textit{training}. Neural operators are fast-to-evaluate and offer an alternative to the existing surrogate modeling techniques for accelerating the posterior characterization of infinite-dimensional Bayesian inverse problems by replacing the nonlinear PDE solves with evaluations of trained neural operators. We explore this alternative surrogate modeling approach using neural operators in this work.

The direct deployment of trained neural operators as surrogates of the nonlinear PDE-based model transfers most of the computational cost from posterior characterization to the offline generation of training samples and neural network training. Moreover, in contrast to some of the surrogate modeling approaches that approximate the parameter-to-observation or parameter-to-likelihood maps~\cite{marzourk2007stochastic, marko2021parallel}, neural operators approximate the parameter-to-state map, or \textit{learn the physical laws}. As a result, they can be used as surrogates for a class of different Bayesian inverse problems with models governed by the same PDEs but with different types of observations and noise models, thus further amortize the cost of surrogate construction.

While the drastic reduction of computational cost is advantageous, the accuracy of trained neural operators as well as the accuracy of the resulting posterior characterization produced by them needs to be examined. In theory, there are universal approximation results, such as those for DeepONet~\cite{LuJinKarniadakis2019}, Fourier neural operators~\cite{KovachkiLiLiuEtAl2021}, and reduced basis architectures \cite{BhattacharyaHosseiniKovachkiEtAl2020,OLearyRoseberryDuChaudhuriEtAl2021}, that imply the existence of neural operators that approximate a given nonlinear map between function spaces within certain classes arbitrarily well. In practice, however, constructing and training neural operators to satisfy a given accuracy can be challenging and unreliable. One often observes an empirical accuracy ceiling -- enriching training data and enhancing the representation power of network operators via increasing the inner-layer dimensions or the depth of neural networks, as often suggested by universal approximation theories, do not guarantee improved performance. In fact, in certain cases, increasing training data or depth of networks can lead to degraded performance. These behaviors are contrary to some other approximation methods, such as the finite element method with hp-refinement and surrogate modeling using polynomial approximation or model order reduction, for which theoretical results are well-connected to numerical implementation for controlling and reducing approximation errors~\cite{Babuska1994, demkowicz2006computing, ernst2012on, chen2017reduced}. The unreliability of neural operator performance improvement via training is a result of several confounding reasons that are discussed in this work. It is demonstrated via empirical studies in recent work by de Hoop et.\thinspace al~\cite{deHoop2022cost}, where neural operator performance, measured by their cost--accuracy trade-off, for approximating the parameter-to-state maps of various nonlinear parametric PDEs are provided.

The approximation error of a trained neural operator in the operator learning problem propagates to the error in the solutions of Bayesian inverse problems when the trained neural operator is employed as a surrogate. We demonstrate, through deriving an \textit{a priori} bound, that the approximation error of a trained neural operator controls the error in the posterior distributions defined using the trained neural operator. Additionally, the bounding constant shows that Bayesian inverse problems can be ill-conditioned to the approximation error of neural operators in many scenarios, such as when the prior is uninformative, data is high-dimensional, noise corruption is small, or the models are inadequate. Our theoretical result suggests that for many challenging Bayesian inverse problems, posing accuracy requirements on their solutions may lead to significantly tighter accuracy requirements in the training of neural operators that are practically inaccessible due to the limitation of neural operator training.

In this work, we consider a strategy for reliably deploying a trained neural operator as a surrogate in, but not limited to, infinite-dimensional Bayesian inverse problems. This strategy is inspired by a recent work by Jha and Oden~\cite{jha2022goal} on extending the goal-oriented \textit{a posteriori} error estimation techniques~\cite{oden2001goal, oden2002estimation, prudhomme1999goal, prudhomme2003computable, becker2001optimal, giles2002adjoint, pierce2000adjoint} to accelerate Bayesian calibration of high-fidelity models with a calibrated low-fidelity model. Instead of directly using the prediction of the trained neural operator at a given parameter for likelihood evaluation, we first solve a linear \textit{error correction} problem based on the PDE residual evaluated at the neural operator prediction and then use the obtained solution for likelihood evaluation. We show that solving this error-correction problem is equivalent to generating one Newton iteration under some mild conditions, and a trained neural operator with error correction can achieve global, i.e., over the prior distribution, quadratic error reduction when the approximation error of the trained neural operator is relatively small. We expect that the significant accuracy improvement of a trained neural operator from the error correction leads to vital accuracy improvement of the posterior characterization for challenging Bayesian inverse problems. The improvement in the accuracy of posterior characterization is achieved while retaining substantial computational speedups proportional to the expected number of iterative linear solves within a nonlinear PDE solve at parameters sampled from the posterior distribution,

To showcase the utility of the proposed strategy, two numerical examples are provided. In the first example, we consider the inference of an uncertain coefficient field in an equilibrium nonlinear reaction--diffusion problem with a cubic reaction term from discrete observations of the state. The second example concerns the inference of Young's modulus, as a spatially varying field, of a hyperelastic material from discrete observations of its displacement in response to an external force. For both examples, trained neural operators, despite reaching their empirical accuracy ceilings, fail to recover all distinctive features of the posterior predictive means, whereas the error-corrected neural operators are consistently successful in such tasks.

\subsection{Related works}

We next discuss some of the related works on error correction in surrogate modeling approaches for Bayesian inverse problems. To the best of our knowledge, the existing works mainly focus on building data-driven models of the approximation error of surrogate parameter-to-observation maps. The sampling-based techniques for error correction presented in these works are different from the residual-based approach proposed in this work. The term model error correction sometime refers to numerical methods for representing model inadequacy, which is beyond the scope of this work.

In the context of model order reduction, Arridge et.\thinspace al~\cite{arridge2006approximation} proposed an offline sampling approach for constructing a normal approximation for the joint probability distribution of the error in surrogate-predicted observations and the parameter over the prior distribution. The probability distribution of the error conditioned on the parameter can then be directly used for correcting likelihood evaluations defined using an additive Gaussian noise model. This approach simplifies the conditional dependence of the error on the parameter, leading to unreliable performance as pointed out by Manzoni et.\thinspace al~\cite{manzoni2016accurate}, who proposed two alternative error models: one based on radial basis interpolation and the other on linear regression models. Cui et.\thinspace al~\cite{cui2019posteriori} presented two methods for adaptively constructing error models during posterior sampling using delayed-acceptance Metropolis--Hastings: one is similar to that of Arridge et.\thinspace al but with posterior samples, and the other is a zeroth order error correction using the error evaluated at the current Markov chain position. 

Additionally, correcting errors in neural network surrogates is explored by Yan and Zhou~\cite{yan2019adaptive} for large-scale Bayesian inverse problems. They propose a strategy based on a predictor--corrector scheme using two neural networks. The predictor is a deep neural network surrogate of the parameter-to-observable map constructed offline. The corrector is a shallow neural network that takes the prediction of the surrogate as input and produces a corrected prediction. The corrector is trained using a few model simulations produced during posterior characterization.

\subsection{Layout of the paper}
The layout of the paper is as follows. In Section 2, infinite-dimensional Bayesian inverse problems and their numerical solutions are introduced in an abstract Hilbert space setting. In Section 3, the operator learning problem associated with neural operator approximation of nonlinear mappings in function spaces is introduced. The sources and reduction of approximation errors in neural network training are discussed. A result on \textit{a priori} bound of the error in the posterior distributions of the Bayesian inverse problem using the operator learning error is provided and interpreted. In Section 4, we introduce the residual-based error correction problem and discuss its conditional equivalency to a Newton-step problem. Then the error-corrected neural operator is proposed, and computational cost analysis for its use as a surrogate for posterior sampling is provided. Connections of the error-correction problem to goal-oriented \textit{a posteriori} error estimation techniques are also taken up in the same section. In Section 5, the physical, mathematical, and numerical settings for the two numerical examples of infinite-dimensional Bayesian inverse problems are provided. The empirical accuracy of neural operators and error-corrected neural operators at different sizes of training data is presented. Posterior mean estimates generated by the model, trained neural operators, and neural operators with error correction are visualized and examined to understand the accuracy of posterior sampling. The results of empirical and asymptotic cost analysis for the posterior sampling are also showcased. The concluding remarks are given in Section 6.
\section{Preliminaries}\label{sec:prelim}
In this section, we introduce infinite-dimensional Bayesian inverse problems in an abstract Hilbert space setting. We refer to~\cite{Stuart2010, Bui2013, Petra2014} and references therein for a more detailed analysis and numerical implementation of infinite-dimensional Bayesian inverse problems. For general treatments of Bayesian inference problems, see~\cite{gelman2014bayesian, robert2004monte}. For a reference on the theory of probability in infinite-dimensional Hilbert spaces, see~\cite{Prato2006}.

\subsection{Models governed by parametric partial differential equations}

Consider a mathematical model that predicts the state $u\in\mathcal{U}$ of a physical system given a parameter $m\in\mathcal{M}$. We assume that the model is governed by partial differential equations (PDEs), and $\mathcal{U}$ and $\mathcal{M}$ are infinite-dimensional separable real Hilbert spaces endowed with inner products $(\cdot,\cdot)_{\mathcal{U}}$ and $(\cdot,\cdot)_{\mathcal{M}}$, respectively. The state space $\mathcal{U}$ is a Sobolev space defined over a bounded, open, and sufficiently regular spatial domain $\Omega_u\subset\R^{3}$. It is either consists of functions with ranges in a vector space of dimension $d_s\leq 3$, such as $H^1(\Omega_u;\R^{d_s})$, or time-evolving functions, such as $L^2(0, T;H^1(\Omega_u; \R^{d_s}))$ with $T>0$. The former is appropriate for boundary value problems (BVPs), while the latter is appropriate for initial and boundary value problems (IBVPs). We assume $\mathcal{M}$ consists of spatially-varying scalar-valued functions defined over a set $\Omega_m\subset \overline{\Omega_u}$. The parameter $m$ may appear in boundary conditions, initial conditions, forcing terms, or coefficients of the PDEs.

We specify the model as an abstract nonlinear variational problem as follows. Let $\mathcal{U}_0\subseteq\mathcal{U}$ be a closed subspace that satisfies the homogenized strongly-enforced boundary and initial conditions of the PDEs. Let the solution set $\mathcal{V}_u\subseteq\mathcal{U}$ be an affine space of $\mathcal{U}_0$ that satisfies the strongly-enforced boundary conditions and initial conditions that possibly depend on $m$. The abstract nonlinear variational problem can be written as,
\begin{equation}\label{eq:residual}
    \text{Given } m\in\mathcal{M},\text{ find } u\in \mathcal{V}_u\text{ such that }\quad \mathcal{R}(u,m) = 0\quad \in \mathcal{U}_0^*\,,
\end{equation}
where $\mathcal{R} = \mathcal{R}(u, m)$ is a residual operator associated with the variational form, and $\mathcal{U}_0^*$ is the dual space of the space of test functions $\mathcal{U}_0$.
We assume that the residual operator is possibly nonlinear with respect to both the parameter and state, and the nonlinear variational problem has a unique solution for any $m\in\mathcal{\mathcal{M}}$. As a result, we can define a solution operator $\mathcal{F}:\mathcal{M}\to\mathcal{V}_u$, or the \textit{forward operator}, of the model, i.e.,
\begin{equation}
    \mathcal{R}(\mathcal{F}(m), m) \equiv 0\quad\forall m\in\mathcal{M}\,.
\end{equation}

\subsection{Infinite-dimensional Bayesian inverse problems}
Let $\by\in\R^{n_y}$ denote a set of discrete and noisy observations of the physical system described by the state $u\in\mathcal{U}$. We assume that the state $u$ and observations $\by$ are connected via a possibly nonlinear observation operator $\bdmc{B}:\mathcal{U}\to\R^{n_y}$ and a linear additive noise model\footnotemark\footnotetext{Alternatives to the linear additive noise model, such as a multiplicative noise model or a mixture of both, do not affect the error correction approach introduced in this work. See, e.g.,~\cite{Dunlop2019} for investigations of alternative noise models.},
\begin{equation}
    \by = \bdmc{B}(u) + \boldsymbol{n}\,,
\end{equation}
where $\boldsymbol{n}$ is an unknown noise vector that corrupts the observed data. We assume it is a realization of a random vector $\boldsymbol{N}$ with a probability distribution of $\nu_{\noise}$ and density of $\pi_{\noise}$.

Under the Bayesian framework of inverse problems, the model parameter is considered epistemically uncertain and a $\mathcal{M}$-valued random function denoted by $M$. Its probability distribution $\nu_M$, called \textit{the prior distribution}, incorporates our prior knowledge of the parameter. This leads to the following data model,
\begin{equation}\label{eq:data_model}
    \bY = (\bdmc{B}\circ\mathcal{F})(M) + \boldsymbol{N}\,,\quad M\sim\nu_M,\quad \noise\sim\nu_{\noise}\,,
\end{equation}
where the data is considered a random vector $\bY$ due to the influence of the measurement uncertainty and parameter uncertainty, represented by $\noise$ and $M$, respectively. In the context of the Bayesian inverse problem, the forward operator $\mathcal{F}$ is also referred to as the \textit{parameter-to-state map}.

Given a particular set of observed data $\by^*$, the goal of the Bayesian inverse problem is to construct or sample from the distribution of the parameter conditioned on the observed data $\by^*$, or the \textit{posterior distribution}, denoted by $\nu_{M|\bY}(\cdot|\by^*)$. The posterior and prior are related via the \textit{likelihood function}, $\mathcal{L}(\cdot;\by^*):\mathcal{M}\to\R_+$, according to \textit{Bayes' rule},
\begin{equation}\label{eq:bayes_rule}
    \frac{d\nu_{M|\bY}(\cdot|\by^*)}{d\nu_{M}}(m) = \frac{1}{Z(\by^*)}\underbrace{\pi_{\noise}\left(\by^*-(\bdmc{B}\circ\mathcal{F})(m)\right)}_{\eqqcolon\mathcal{L}(m;\boldsymbol{y}^*)}\quad\as\,,
\end{equation}
where $ d\nu_{M|\bY}(\cdot|\by^*)/d\nu_{M}$ is the Radon–Nikodym derivative of the posterior distribution with respect to the prior distribution, and $Z(\by^*) \coloneqq \expect{M\sim \nu_M}{\mathcal{L}(M;\by^*)}$ is the marginal likelihood or model evidence. The likelihood function evaluated at $m\in\mathcal{M}$ returns the probability of observing $\by^*$ at $m$ under the assumptions of the data model in~\eqref{eq:data_model}. Each evaluation of the likelihood function requires solving the model, i.e., evaluating the forward operator. The resulting posterior distribution encodes the additional knowledge of the parameter based on the information of the physical system contained in the observed data.

The prior is often defined as a Gaussian measure $\nu_M\coloneqq\mathcal{N}(m_{\text{pr}}, \mathcal{C}_{\text{pr}})$, where $m_{\text{pr}}\in\mathcal{M}$ is the mean and $\mathcal{C}_{\text{pr}}:\mathcal{M}\times\mathcal{M}\to\R$ is a covariance operator. The covariance operator can be defined using an inverse elliptic operator with a Robin boundary condition, which can be expressed in the strong form as
\begin{equation}\label{eq:gaussian_prior}
    \mathcal{C}_{\text{pr}} = \begin{cases}
    (-\alpha\nabla\cdot \boldsymbol{\Theta}\nabla + \beta)^{-d} &\quad \text{ in }\Omega\,, \\
    \boldsymbol{\Theta}n\cdot\grad + \gamma &\quad \text{ on }\partial\Omega\,,
    \end{cases}
\end{equation}
where $n$ is the outward normal vector, and the negative exponent $d$ is chosen sufficiently large to ensure bounded pointwise variance of the covariance operator and the well-posedness of the Bayesian inverse problem. The hyperparameters of the prior, namely, $\alpha, \beta, d, \boldsymbol{\Theta}, \gamma$, control its properties. For $\Omega_m$ with a dimension larger than one, i.e., $M$ is a Gaussian random field, the orthonormal matrix $\boldsymbol{\Theta}$ controls the anisotropy, and if $\boldsymbol{\Theta}$ is the identity matrix, one has isotropic random fields. The parameters $\alpha$ and $\beta$ together control the pointwise variance and correlation length of the random function. Finally, the value of $\gamma\propto\sqrt{\alpha\beta}$ is often chosen to minimize boundary artifacts.

\begin{remark}
Here we make a distinction between the physical parameter, denoted as $p\in\mathcal{M}$, and the model parameters $m\in\mathcal{M}$ that is ignored in the description above. While the physical parameter is often the target for inversion, they may have additional constraints so that the solution operator defined by the map between the physical parameter and model solution is only well-defined in a subset $\mathcal{D}\subset\mathcal{M}$. If the uncertain physical parameter $P\sim\nu_P = \mathcal{N}(m_p,\mathcal{C}_p)$ follows a Gaussian distribution and is used as the prior distribution in~\eqref{eq:bayes_rule}, then the condition $\nu_P(\mathcal{M}\setminus\mathcal{D})=0$ for the well-posedness of Bayesian inverse problems might not be satisfied. If this is the case, such constraints can be enforced by introducing a deterministic coupling of the physical parameter with the model parameter using a smooth function $\psi$ such that $\psi({\mathcal{M}})\subseteq\mathcal{D}$ and
\begin{equation}
    P = \psi(M)\,,\quad M\sim\mathcal{N}(m_{\text{pr}}, \mathcal{C}_{\text{pr}})\,.
\end{equation}
For example, if $P$ is strictly positive, such as the thermal conductivity field and Young's modulus in Section 5, then $\psi(\cdot) := \exp(\cdot)$ is typically used. Under this assumption, we can formulate Bayesian inverse problems based on a Gaussian prior distribution that is well-understood and relatively easy to implement. On the other hand, it is often the case that one has prior knowledge on $P$ but not $M$, thus this assumption requires designing suitable $\psi$, $m_{\text{pr}}$, and $\mathcal{C}_{\text{pr}}$ to reflect our uncertainty in $P$.
\end{remark}

\subsection{Numerical solutions of Bayesian inverse problems}

We consider a finite-dimensional approximation of the abstract Bayesian inverse problem introduced above. We start from a finite-dimensional approximation of the function spaces $\mathcal{U}$ and $\mathcal{M}$, denoted by $\mathcal{U}^h\subset \mathcal{U}$ and $\mathcal{M}^h\subset\mathcal{M}$, respectively. The finite-dimensional approximation of $\mathcal{M}$ is accomplished by a Galerkin approximation using a set of basis functions $\{\psi_j\in\mathcal{M}\}_{j=1}^{d_m}$. For the state $u$ defined over $\Omega_u$, e.g., $H^1(\Omega_u;\R^{d_s})$, we assume a similar approximation using a set of basis functions $\{\phi_j\in\mathcal{U}\}_{j=1}^{d_g}$ with typically a consistent discretization. For a time-evolving Sobolev space, additionally, a time discretization $0 = t_1< t_2< \dots < t_{d_t}=T$ is required and a discrete-time integration rule needs to be specified. We use $d_u = d_gd_s$ for BVPs and $d_u = d_gd_td_s$ for IBVPs to represent the total number of degrees-of-freedom of $\mathcal{U}^h$.

The residual $\mathcal{R}(u, m)$ can be estimated using numerical techniques such as the finite element method, with particular choices of the basis, test space discretization, as well as functional evaluations. The forward operator evaluations can then be numerically computed using iterative methods for nonlinear equations, such as fixed point iteration. We denote the forward operator associated with a computer simulation of the model as $\mathcal{F}^h:\mathcal{M}^h\to\mathcal{U}^h$.

The nonlinear Bayesian inverse problem can be solved numerically with a combination of the approximated forward operator $\mathcal{F}^h$, the prior $\nu_M^h$ represented in the finite-dimensional bases, i.e., its samples are in $\calM^h$, and a method for sampling from the posterior distribution $\nu^h_{M|\boldsymbol{Y}}(|\boldsymbol{y}^*)$ represented in the finite-dimensional bases. For generating numerical results later in Section 5, we restrict ourselves to sampling methods that require the evaluation of the likelihood function but not the gradient and Hessian estimates of the posterior that often yield increased computational efficiency\footnotemark. Here we consider a version of the Markov chain Monte Carlo (MCMC) method called the \textit{preconditioned Crank--Nicolson} (pCN) algorithm~\cite{Cotter2013}, a dimension-independent method that is applicable when the likelihood function is of the form:
\footnotetext{We acknowledge that this is a strong restriction for infinite-dimensional Bayesian inverse problems. The restriction is due to the limitation of neural operators in a general setting. In most cases, the loss function of neural operator training is uninformed of the gradient of forward operator, leading to poor performances in gradient estimations. See~\cite{OLearyRoseberryChenVillaEtAl2022} for how to incorporate high-dimensional derivative information in neural operator training.}
\begin{equation}\label{eq:pCN_likelihood}
    \mathcal{L}(m;\boldsymbol{y}^*) \propto \exp(-\Phi(m))\,,
\end{equation}
where $\Phi(\cdot):\mathcal{M}\to\R_+$ is referred to as the \textit{potential}. One of the examples of such a likelihood function is that of normally distributed noise with $\nu_{\boldsymbol{N}} = \mathcal{N}(\vect{0}, \boldsymbol{C}_{\noise})$ in which case we have
\begin{equation}\label{eq:gaussian_noise}
    \Phi(m) = \frac{1}{2}\norm{\boldsymbol{y}^* - (\bdmc{B}\circ\mathcal{F})(m)}^2_{\boldsymbol{C}_{\noise}^{-1}}\,,
\end{equation}
where $\norm{\cdot}_{\boldsymbol{C}_{\noise}^{-1}}$ denotes the discrete $l^2$-norm weighted by the inverse of the noise covariance matrix $\boldsymbol{C}_{\noise}^{-1}$. The pCN algorithm is described in Algorithm~\ref{al:mcmc}.

\begin{algorithm}
\SetAlgoLined
\KwResult{A Markov chain $\{m_j\in\mathcal{M}\}_{j=1}^{n_\text{chain}}$ with an invariant measure of $\nu_{M|\boldsymbol{Y}}(\cdot|\boldsymbol{y}^*)$ defined by~\eqref{eq:bayes_rule}.}
\SetKwInOut{Input}{Input}
\Input{(1) an initial guess $m_0\in \nu_{M}$, (2) a mixing parameter $\beta_{\text{pCN}}\in (0,1)$}
$k = 0$\;
$\Phi_0 = \Phi(m_0;\boldsymbol{y}^*)$\;
\While{$k<n_{\text{chain}}$}{
Sample from prior, $\hat{m}\sim \nu_{M}$\;
Generate the proposal parameter, $m_{\text{p}} = {\sqrt {1-\beta_{\text{pCN}}^{2}}}m_{k}+\beta_{\text{pCN}}\hat{m}$\;
Evaluate the potential, $\Phi_p = \Phi(m_p)$\;
\uIf{$\exp(\Phi_k-\Phi_p)\geq r\sim \text{Uniform}([0,1])$}
{
$m_{k+1} = m_{\text{p}}$\;
$\Phi_{k+1} = \Phi_{\text{p}}$\;
}
\Else{
$m_{k+1} = m_k$\;
$\Phi_{k+1} = \Phi_k$\;
}
$k \leftarrow k+1$\;
}
\caption{The preconditioned Crank--Nicolson (pCN) algorithm.
}
\label{al:mcmc}
\end{algorithm}

MCMC algorithms such as pCN generate Markov chains $\{m_j\in\mathcal{M}^h\}_{j=1}^{n_{\text{chain}}}$ that are used for further analysis. A large portion of the chains must be discarded, or ``burned", as they are influenced by initial samples of the chains that might be far from the support of the posterior. Second, the rest of the samples along each chain are correlated to certain degrees, leading to a much smaller effective sample size, i.e., the number of independently and identically distributed (i.i.d.) samples from the posterior distribution. They are typically much smaller than the actual sample size in the burned Markov chains. For problems with highly localized posterior and highly nonlinear PDEs, the computational cost, measured by the total number of iterative solves for forward operator evaluations, for generating a given number of effective samples from the posterior can be intractably high if performed without introducing advanced algorithmic or numerical techniques.
\section{Neural operators and approximation errors}\label{sec:neural_operators}

The high computational cost of infinite-dimensional Bayesian inverse problems motivates the development of \textit{surrogates} of the forward operator $\mathcal{F}$ that are fast-to-evaluate and constructed offline, i.e., prior to receiving observation data and posterior sampling. Employing surrogate forward operators may lead to a significant speedup of posterior sampling, yet typically leads to a trade-off in the accuracy of posterior representations.

In what follows, we introduce the operator learning problems associated with the learning of nonlinear maps between function spaces $\mathcal{M}$ and $\mathcal{U}$ using neural networks, or \textit{neural operators}, which is appropriate for models based on PDEs introduced in Section 2. We review some theoretical and practical aspects of neural operators and their training, focusing on sources and reduction of approximation errors of neural operators in the operator learning setting as well as the propagation of the approximation error in Bayesian inverse problems.

\subsection{Operator learning with neural networks}
We consider a general class of \textit{operator learning problems}, where we wish to determine a nonlinear map $\operator_{\boldsymbol{w}}:\mathcal{M}\to\mathcal{U}$ parameterized by $\boldsymbol{w}\in\mathcal{W}\subseteq\R^{d_w}$ such that $\operator_{\boldsymbol{w}}$ is \textit{closest} to a forward operator $\mathcal{F}$ via the following optimization problem:
\begin{equation}
    \inf_{\boldsymbol{w}\in\mathcal{W}} \mathcal{J}(\boldsymbol{w}) \coloneqq \norm{ \mathcal{F}- \operator_{\boldsymbol{w}}}_{\mathcal{T}}\,,
\end{equation}
where $\mathcal{T}$ is the suitable Bochner space for the forward operator $\mathcal{F}$. It is often defined as
\begin{equation}
    \mathcal{T}\coloneqq  L^p(\mathcal{M},\nu_M;\mathcal{U})\,,\quad \norm{\mathcal{G}}_{L^p(\mathcal{M},\nu_M;\mathcal{U})} = 
    \begin{cases}
        \left(\mathbb{E}_{M\sim\nu_M} \left[ \norm{\mathcal{G}(M)}^p_{\mathcal{U}}\right]\right)^{1/p}\,, & p\in[1,\infty)\,,\\
        \esssup_{M\sim\nu_M} \norm{\mathcal{G}(M)}_{\mathcal{U}}\,, & p=\infty\,,
    \end{cases}
\end{equation}
where the choice of $p$ depends on the regularity of the forward operator $\mathcal{F}$. The choice $p=2$ is often taken in practice, and additional learning of derivatives may be included via generalizing to $\mathcal{T} = W^{1,p}(\mathcal{M},\nu_M;\mathcal{U})$, as in~\cite{OLearyRoseberryChenVillaEtAl2022}. 

A neural operator approximates a nonlinear map, such as the forward operator $\mathcal{F}$ for PDEs, by ``learning" a neural network representation of the map. The neural network takes as its inputs a finite-dimensional representation of the parameter $m$, i.e., the degrees-of-freedom of $\mathcal{M}^h$, and produces output as a finite-dimensional representation of $\operator_{\boldsymbol{w}}(m)\in\mathcal{U}^h$ through a sequence of compositions of nonlinear functions, i.e., activation functions, acting on affine operations. The coefficient arrays in the affine operations correspond to the neural network weights $\boldsymbol{w}$, which are found by solving the operator learning problem, or \textit{training}, to be specified later. The choice of affine layers and activation functions decides the architecture of the neural operator. Many different architectures are used in practice; typical choices for the affine operations include fully-connected dense layers, residual neural networks (ResNets), and convolution layers, while typical activation functions include ReLU, sigmoid, tanh, softplus, etc. Neural operators can include additional operations such as Fourier transforms \cite{LiKovachkiAzizzadenesheliEtAl2020a}.

Many classes of neural networks are known to be universal approximators of different classes of functions. Universal approximation results for neural operators in \cite{BhattacharyaHosseiniKovachkiEtAl2020,KovachkiLiLiuEtAl2021,LuJinKarniadakis2019,OLearyRoseberryDuChaudhuriEtAl2021} establish the following result that we state in a formal and generalized way. For a nonlinear mapping $\mathcal{G}$ belong to some subset of a Bochner space $(\mathcal{T},\norm{\cdot}_{\mathcal{T}})$ and any desired error tolerance $\epsilon>0$, there exists a neural network architecture (e.g. number of layers, breadth, etc.) with weights $\boldsymbol{w}^\dagger$ that defines an operator $\operator_{\boldsymbol{w}^{\dagger}}$ such that 
\begin{equation}\label{:approximator}
    \norm{\mathcal{G} - \operator_{\boldsymbol{w}^\dagger}}_{\mathcal{T}} < \epsilon\,.
\end{equation}
Such results establish that complex high-dimensional parametric maps can, \textit{in theory}, be learned directly via neural networks arbitrarily well.

For a candidate neural network architecture, the loss function $\mathcal{J}$ can be numerically estimated via sample average approximation, leading to the following empirical risk minimization problem, or \textit{neural operator training problem}, for finite $p$:
\begin{equation}\label{eq:emp_risk_min}
    \min_{\boldsymbol{w}\in\mathcal{W}} \widetilde{\mathcal{J}}(\boldsymbol{w};\{m_j\}_{j=1}^{n_\text{train}}) \coloneqq \frac{1}{n_{\text{train}}}\sum_{j=1}^{n_{\text{train}}}\norm{u_j - \operator_{\boldsymbol{w}}(m_j;\boldsymbol{w})}_{\mathcal{U}}^p\,,\quad \left\{\left(m_j, u_j = \mathcal{F}^h(m_j)\right)|m_j\sim \nu_{M}^h\right\}_{j =1}^{n_{\text{train}}}\,,
\end{equation}
where samples of training data are generated prior to the training. In some settings, one may be able to incorporate additional information via physics constraints for the optimization problem, perhaps making the optimization problem more difficult \cite{LiZhengKovachkiEtAl2021,RaissiPerdikarisKarniadakis2019,WangWangPerdikaris2021,YuLuMengEtAl2022}, but more informed.

For many models for multiscale complex systems, fine discretization may be required to resolve necessary physics, which leads to large costs in evaluating the forward operator. In this case, a fundamental issue in neural operator learning arises: one is faced with learning very high-dimensional nonlinear maps from limited training data. This issue makes neural operator learning fundamentally different from typical machine learning due to the limitations of sparse training data. In order to address this issue, the input and output spaces of neural operators are often restricted to some finite-dimensional reduced bases of $\mathcal{M}$ and $\mathcal{U}$. Different architectures of neural operator incorporate different classical reduced basis representations such as proper orthogonal decomposition (POD) \cite{BhattacharyaHosseiniKovachkiEtAl2020,FrescaManzoni2022,OLearyRoseberryDuChaudhuriEtAl2021,OLearyRoseberryVillaChenEtAl2022}, Fourier representation \cite{LiKovachkiAzizzadenesheliEtAl2020a}, multipole graph representations \cite{LiKovachkiAzizzadenesheliEtAl2020b}, and derivative sensitivity bases \cite{OLearyRoseberryChenVillaEtAl2022,OLearyRoseberryDuChaudhuriEtAl2021,OLearyRoseberryVillaChenEtAl2022} among others. These reduced basis representations offer a scalable means of learning structured maps between infinite-dimensional spaces, by taking advantage of their compact representations in specific bases, if such representations exist. From a numerical point of view, this also mitigates direct dependence on the possibly enormous degrees-of-freedom $d_u$ and $d_m$ of the discretized state and parameter.

In the case that a sufficiently accurate network operator $\operator_{\boldsymbol{w}^\dagger}$ (i.e. specific architecture and weights) in a well-designed reduced basis representation is found, one can substitute this surrogate for the forward operator in the likelihood function,
\begin{equation}
    \mathcal{L}(m;\boldsymbol{y}^*)\mapsto\widetilde{\mathcal{L}}(m;\boldsymbol{y}^*)\coloneqq \pi_{\boldsymbol{N}}\left(\boldsymbol{y}^* - (\bdmc{B}\circ\operator_{\boldsymbol{w}^{\dagger}})(m)\right)\,,
\end{equation}
resulting in significant computational speedups for infinite-dimensional Bayesian inverse problems with models governed by nonlinear PDEs. We note that the same neural operator can also be deployed to accelerate Bayesian inverse problems governed by the same model but defined by possibly a variety of different noise models, observation operators, or data, which leads to additional computational cost reduction via the amortization of the model deployment in many different problems. Neural operators have also observed success in their deployment as surrogates for accelerating so-called ``outer-loop'' problems, such as inverse problems \cite{LiKovachkiAzizzadenesheliEtAl2020a}, Bayesian optimal experimental design \cite{Wu2022}, PDE-constrained optimization \cite{wang2021fast}, etc., where models governed by PDEs need to be solved repeatedly at different samples of input variables.

The hope with neural operators is that the ability to learn very high-dimensional complex nonlinear parametric maps via fast-to-evaluate surrogates provides an alternative approach for making the infinite-dimensional outer-loop problems governed by nonlinear parametric PDEs tractable, due to the significant reduction in per-iteration costs. In many settings, however, sufficient accuracy may be out of reach due to some mathematical and numerical issues inherent to neural network construction and training.

\subsection{Sources and reduction of the approximation errors}

While approximation theories posit the existence of arbitrarily accurate neural networks for operator learning problems, reliably realizing them in practice still remains a major obstacle in machine learning research. In particular, one often observes empirical ceilings in the approximation accuracy of neural operators, typically measured by numerically estimating a relative accuracy percentage metric given by
\begin{equation}\label{eq:t_accuracy}
    100\left(1 - \mathbb{E}_{M\sim \nu_M}\left[\frac{\norm{\mathcal{F}(M) - \operator_{\boldsymbol{w}}(M)}_{\mathcal{U}}}{\norm{\mathcal{F}(M)}_{\mathcal{U}}}\right]\right)\,.
\end{equation}
The accuracy metric above is often referred to as the \textit{generalization accuracy}. The empirical accuracy ceiling is often persistently observed in the regime where both the error in estimating the loss function, such as statistical errors due to finite samples, and truncation error due to reduced bases representation are asymptotically small. This is due to many confounding issues that we discuss here in brief.

The universal approximation theoretic understanding is typically disconnected from the way neural operators are constructed in practice. Many universal approximation results hinge on density arguments, i.e., certain classes of neural networks are capable of approximating, e.g., polynomials or simple functions to arbitrary accuracy. These density arguments are then used to tie error bounds to well-known approximation results for the class of functions that neural networks are dense in. The theoretical construction of the neural networks used in density arguments may lead to infinitely broad or deep networks in the limit \cite{Cybenko1989,Hornik1991,LiLinShen2019,LinJegelka2018,LuPuWangEtAl2017}.

In practice, however, neural network performance eventually degrades as the network gets too broad or deep, and thus neural operators are not constructed in this way. Instead one has to employ a combination of physical intuition, architecture search, and model selection techniques to produce a neural network that will work suitably for the target setting; see, e.g., \cite{anders1999model,ren2021comprehensive}. The resulting neural network is then calibrated via an empirical risk minimization on finite sample data using an optimizer that searches for local minimizers. The empirical risk minimization problem is typically nonconvex, in which case finding global minimizers is NP-hard. The empirical risk minimization problem over finite samples in \eqref{eq:emp_risk_min} introduces both statistical sampling error, as well as optimization error since the local minimizer may be significantly worse than the global minimizer for that particular neural network. 

Altogether these different errors lead to a challenging situation where, while neural operators show significant promise in learning complex parametric maps from training data up to a certain accuracy, eventually one cannot continue to improve the empirical accuracy reliably, contrary to other approximation methods such as the finite element method with hp-refinement~\cite{Babuska1994, demkowicz1989toward, demkowicz2006computing} or surrogate modeling approaches such as polynomial approximation, model order reduction, and Gaussian process approximation ~\cite{ernst2012on, chen2017reduced, teckentrup2020convergence}. In fact, at a certain point adding more representation power can make the approximation worse, and more training data does not improve the generalization accuracy. For a relatively comprehensive empirical comparison of some neural operators that demonstrates the diminishing marginal returns described above, see recent work by de Hoop et.\thinspace al in~\cite{deHoop2022cost}.

\subsection{Propagation of the approximation errors in Bayesian inverse problems}

If one employs a trained neural operator in place of the forward operator in a Bayesian inverse problem, the approximation error of the neural operator will lead to errors in the resulting posterior distribution. Depending on the conditioning of the particular Bayesian inverse problem of interest, the empirical accuracy ceiling of neural operator training might not be sufficient for reaching the accuracy requirement of the Bayesian inverse problem. 

Let $\mathcal{E}\in L^p(\mathcal{M},\nu_M;\mathcal{U})$ be the approximation error of the operator learning problem,
\begin{equation}
    \mathcal{E}(m) \coloneqq \mathcal{F}(m) -\operator_{\boldsymbol{w}}(m)\quad \nu_M\text{-a.e.} \, .
\end{equation}
The goal of the following analysis is to find \textit{a priori} bound on the error in the posterior distribution using the error in the operator learning problem, i.e., $\norm{\mathcal{E}}_{L^p(\mathcal{M},\nu_M;\mathcal{U})}$, and obtain an understanding of the bounding constant, i.e., the conditioning of Bayesian inverse problems with respect to the operator learning error. The error in the posterior distribution referred to as $\mathcal{E}_{\text{post}}$ can be represented by the Kullback--Leibler (KL) divergence  as follows
\begin{equation}
    \mathcal{E}_{\text{post}} \coloneqq D_{KL}(\widetilde{\nu_{M|\boldsymbol{Y}}}(\cdot|\boldsymbol{y}^*)||\nu_{M|\boldsymbol{Y}}(\cdot|\boldsymbol{y}^*)) = \mathbb{E}_{M\sim\postt{\cdot}}\left[\ln\left(\frac{\dd \postt{\cdot}}{\dd\post{\cdot}}(M)\right)\right]\,,
\end{equation}
where $\widetilde{\nu_{M|\boldsymbol{Y}}}(\cdot|\boldsymbol{y}^*)$ is the posterior distribution defined by Bayes' rule with the likelihood function $\widetilde{\mathcal{L}}$ based on an approximate mapping $\operator_{\boldsymbol{w}}$.

The following theorem suggests that for Bayesian inverse problems with regular observation operators and normally distributed additive noise, the error in the operator learning problem controls the error of Bayesian inverse problems in an abstract Hilbert space setting introduced in Section 2 and Section 3.1.
\begin{theorem}[Operator learning errors in Bayesian inverse problems]\label{theorem:bip_error}
Assume $\mathcal{F},\operator_{\boldsymbol{w}}\in L^p(\mathcal{M},\nu_M;\mathcal{U})$, $p\in[2,\infty]$. Assume $\bdmc{B}$ satisfies
\begin{equation*}
\begin{gathered}
    \norm{(\bdmc{B}\circ\mathcal{F})(m)}_2\leq c_B\norm{\mathcal{F}(m)}_{\mathcal{U}}\,,\quad \norm{(\bdmc{B}\circ\operator_{\bw})(m)}_2\leq \widetilde{c_B}\norm{\mathcal{F}(m)}_{\mathcal{U}}\,\quad\nu_M\text{-a.e.},\\
    \norm{(\bdmc{B}\circ\mathcal{F})(m) - (\bdmc{B}\circ\operator_{\bw})(m)}_2\leq c_L\norm{\mathcal{E}(m)}_{\mathcal{U}}\,\quad\nu_M\text{-a.e.},
\end{gathered}
\end{equation*}
for constants $c_B,\widetilde{c_B}, c_L>0$. Assume $\nu_{\boldsymbol{N}} = \mathcal{N}(\boldsymbol{0},\boldsymbol{C}_{\boldsymbol{N}})$ is normally distributed and $\widetilde{\nu_{M|\boldsymbol{Y}}}(\cdot|\boldsymbol{y}^*)$, $\nu_{M|\boldsymbol{Y}}(\cdot|\boldsymbol{y}^*)$, $\nu_M$ are mutually absolutely continuous. For a given set of observation data $\boldsymbol{y}^*\in\R^{n_y}$, we have
\begin{equation*}
    \mathcal{E}_{\text{post}}\leq c_{\text{BIP}}\norm{\mathcal{E}}_{L^p(\mathcal{M},\nu_M;\mathcal{U})}\,,\quad c_{\text{BIP}} = c_1\left(c_2(1) + c_3(p)\right)c_L\,,
\end{equation*}
where the constants $c_1, c_2, c_3>0$ are defined by
\begin{align}\label{eq:const_apriori_bayesian_error_estimate}
    c_1 &= \frac{1}{2}\norm{\bC_{\bN}^{-1}\left((\bdmc{B}\circ\mathcal{F})(\cdot) + (\bdmc{B}\circ\operator_{\boldsymbol{w}})(\cdot) - 2\by^*\right)}_{L^p(\mathcal{M},\nu_M;\R^{n_y})}\,,\\
    c_2(p) &= \norm{\exp\left(-\frac{1}{2}\norm{\by^*-(\bdmc{B}\circ\operator_{\boldsymbol{w}})(\cdot)}^2_{\bC_{\bN}^{-1}}\right)}^{-1}_{L^p(\mathcal{M},\nu_M)}\,,\\
    c_3(p) &=  \frac{c_2(1)}{c_2(q)}\in[1, c_2(1)]\,,\quad q = \begin{cases}
    \infty\,, &p =2\,;\\
    p/(p-2)\,, &p\in(2,\infty)\,;\\
    1\,, & p = \infty\,.\\
    \end{cases}
\end{align}
\end{theorem}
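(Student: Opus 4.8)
The plan is to bound the KL divergence by first writing $\calE_{\text{post}}$ as an expectation over the approximate posterior of the log-ratio of Radon--Nikodym derivatives with respect to the prior, then controlling that integrand pointwise by the difference of the two potentials, and finally integrating against the prior after absorbing the change-of-measure factor. Concretely, using Bayes' rule~\eqref{eq:bayes_rule} for both $\post{\cdot}$ and $\postt{\cdot}$, the log-ratio $\ln\bigl(\dd\postt{\cdot}/\dd\post{\cdot}\bigr)(m)$ equals $\Phi(m) - \widetilde\Phi(m) + \ln(Z/\widetilde Z)$, where $\Phi(m) = \tfrac12\norm{\by^*-(\bdmc B\circ\calF)(m)}^2_{\bC_\bN^{-1}}$ and $\widetilde\Phi$ is the analogous potential for $\operator_{\bw}$. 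Since $D_{KL}\ge 0$ in general and here we want an upper bound, I would use the standard trick that $\calE_{\text{post}} = \expect{M\sim\postt\cdot}{\Phi(M)-\widetilde\Phi(M)} + \ln(Z/\widetilde Z)$ and that $\ln(Z/\widetilde Z) \le$ (by Jensen applied to $-\ln$ under $\widetilde\nu_{M|\bY}$, or directly to $\ln$) something controlled by the same potential difference; more cleanly, one shows $\calE_{\text{post}} \le \expect{M\sim\postt\cdot}{\bigl|\Phi(M)-\widetilde\Phi(M)\bigr|}$ after checking the sign works out, or bounds each piece separately.

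The key algebraic step is the pointwise identity for the potential difference: by the polarization/difference-of-squares identity for the weighted norm,
\begin{equation*}
  \Phi(m) - \widetilde\Phi(m) = \tfrac12\Bigl(\bC_\bN^{-1}\bigl((\bdmc B\circ\calF)(m)+(\bdmc B\circ\operator_{\bw})(m)-2\by^*\bigr),\ (\bdmc B\circ\operator_{\bw})(m)-(\bdmc B\circ\calF)(m)\Bigr)_2\,,
\end{equation*}
so by Cauchy--Schwarz in the $\bC_\bN^{-1}$-inner product (or plain Cauchy--Schwarz plus the third hypothesis on $\bdmc B$),
\begin{equation*}
  \bigl|\Phi(m)-\widetilde\Phi(m)\bigr| \le \tfrac12\norm{\bC_\bN^{-1}\bigl((\bdmc B\circ\calF)(m)+(\bdmc B\circ\operator_{\bw})(m)-2\by^*\bigr)}_2\, c_L\,\norm{\calE(m)}_{\mathcal U}\,.
\end{equation*}
Then I would change measure from $\postt\cdot$ to $\nu_M$ using $\dd\postt\cdot/\dd\nu_M = \widetilde{\mathcal L}/\widetilde Z$, so that $\calE_{\text{post}} \le \widetilde Z^{-1}\,\expect{M\sim\nu_M}{\widetilde{\mathcal L}(M)\,|\Phi(M)-\widetilde\Phi(M)|}$. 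Recognizing $\widetilde Z^{-1} = c_2(1)$ (since $\widetilde{\mathcal L}(m)=\exp(-\widetilde\Phi(m))$ up to the constant that cancels) and $\widetilde{\mathcal L}(M) \le \exp(-\widetilde\Phi(M))$, I would apply a three-factor Hölder inequality in $L^p(\nu_M)$ with exponents $(r, p, q)$ where the middle factor $\norm{\calE(M)}_{\mathcal U}$ is taken in $L^p$, the factor $\norm{\bC_\bN^{-1}(\cdots)}_2$ in some exponent giving the $L^p$-norm in the definition of $c_1$, and the factor $\exp(-\widetilde\Phi(M))$ in the conjugate exponent $q$, producing exactly the $c_2(q)$ term; the ratio $c_2(1)/c_2(q)$ is then $c_3(p)$, and combining the $\widetilde Z^{-1}=c_2(1)$ prefactor with $c_3(p)$ absorbed this way yields the stated $c_{\text{BIP}} = c_1(c_2(1)+c_3(p))c_L$ — the additive $+c_2(1)$ presumably coming from bounding the evidence-ratio term $\ln(Z/\widetilde Z)$ separately by an analogous computation that contributes the first summand.

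The main obstacle I expect is the bookkeeping around the $\ln(Z/\widetilde Z)$ term and getting the exact constant structure $c_2(1)+c_3(p)$ rather than just some constant: one must bound $|\ln Z - \ln\widetilde Z|$, which by the mean value theorem equals $|Z-\widetilde Z|/\xi$ for some $\xi$ between them, and then $|Z-\widetilde Z| = |\expect{M\sim\nu_M}{\mathcal L(M)-\widetilde{\mathcal L}(M)}| \le \expect{M\sim\nu_M}{|\exp(-\Phi)-\exp(-\widetilde\Phi)|} \le \expect{M\sim\nu_M}{|\Phi-\widetilde\Phi|}$ (using $|e^{-a}-e^{-b}|\le|a-b|$ for $a,b\ge0$); bounding $1/\xi$ from above by $1/\min(Z,\widetilde Z)$ and relating $\min(Z,\widetilde Z)^{-1}$ to $c_2(1)$ requires care, and then the same Hölder split gives the first $c_1 c_2(1) c_L$ piece while the posterior-expectation piece gives $c_1 c_3(p) c_L$. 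Verifying the admissible range of $p$ (that $q$ as defined is the correct Hölder conjugate making all three norms finite, using $\calF,\operator_{\bw}\in L^p$ and the growth bounds on $\bdmc B$) and checking the claimed range $c_3(p)\in[1,c_2(1)]$ via Jensen/monotonicity of $L^p$-norms in $p$ under a probability measure is routine but must be done to justify that $c_{\text{BIP}}$ is finite.
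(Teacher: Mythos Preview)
Your proposal is correct and follows essentially the same route as the paper: the same decomposition $\calE_{\text{post}} = \ln(Z/\widetilde Z) + \widetilde Z^{-1}\,\mathbb E_{\nu_M}[(\Phi-\widetilde\Phi)\widetilde{\mathcal L}]$, the same polarization identity for $\Phi-\widetilde\Phi$, and the same H\"older split producing $c_1$, $c_2$, $c_3$. Two minor notes: your three-factor H\"older with exponents $(p,p,q)$, $q=p/(p-2)$, is exactly the paper's two-step argument (pointwise Cauchy--Schwarz then Cauchy--Schwarz in expectation to get $\|\Phi-\widetilde\Phi\|_{L^{p/2}}\le c_1 c_L\|\calE\|_{L^p}$, followed by H\"older with $(p/2,q)$); and the obstacle you flag with the mean-value-theorem bound $1/\xi\le 1/\min(Z,\widetilde Z)$ is avoided in the paper by instead writing $\ln(Z/\widetilde Z)=\ln(1+(Z-\widetilde Z)/\widetilde Z)\le |Z-\widetilde Z|/\widetilde Z$, which lands directly on $1/\widetilde Z$ and hence on $c_2(1)$ without needing to control $1/Z$.
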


\begin{proof}
We only provide a sketch of proof here for $p\in[2,\infty)$. Detailed proof is provided in Appendix A. The following transformation can be made to $\mathcal{E}_{\text{post}}$
\begin{equation*}
    \mathcal{E}_{\text{post}} = \underbrace{\ln\left(\frac{Z(\by^*)}{\widetilde{Z}(\by^*)}\right)}_{\tcircle{A}} + \underbrace{\frac{1}{\widetilde{Z}(\by^*)}\mathbb{E}_{M\sim\nu_M}\left[\ln\left(\frac{\widetilde{\mathcal{L}}(M;\by^*)}{\mathcal{L}(M;\by^*)}\right)\widetilde{\mathcal{L}}(M;\by^*)\right]}_{\tcircle{B}}\,.
\end{equation*}
We seek to bound the two terms. By applying Cauchy–Schwarz inequality and Minkowski inequality, we have
\begin{gather*}
    \norm{\Phi-\widetilde{\Phi}}_{L^{p^*}(\mathcal{M},\nu_M)}\leq c_1c_L\norm{\mathcal{E}}_{L^p(\mathcal{M},\nu_M;\mathcal{U})}\,,\quad p^*\in[1, p/2]\,,\\
    c_1\leq \frac{1}{2}\norm{\bC_{\bN}^{-1}}_{2}\left(c_B\norm{\mathcal{F}}_{L^p(\mathcal{M},\nu_M;\mathcal{U})} + \widetilde{c_B}\norm{\operator_{\bw}}_{L^p(\mathcal{M},\nu_M;\mathcal{U})} + 2\norm{\by^*}_2\right)< \infty\,,
\end{gather*}
where $\Phi(m)$,$\widetilde{\Phi}(m)$ are the potentials defined with $\mathcal{F}$,$\operator_{\boldsymbol{w}}$ as in~\eqref{eq:gaussian_noise}. Using Jensen's, H\"older's, and Minkowski inequalities, we have
\begin{equation*}
    c_2(1)^{-1}\geq \exp\left(- \frac{1}{2}\norm{\bC_{\bN}^{-1}}_{2}\left(\norm{\by^*}_2 + \widetilde{c_B}\norm{\operator_{\boldsymbol{w}}}_{L^{p}(\mathcal{M},\nu_M;\mathcal{U})}\right)\right) > 0\,,
\end{equation*}
where $c_2 = c_2(p)$ as a function of $p$ is defined in \eqref{eq:const_apriori_bayesian_error_estimate}. The second term in $\mathcal{E}_{\text{post}}$ can be bounded by the following term using Jensen's and H\"older's inequalities:
\begin{equation*}
    \tcircle{B}\leq c_3(p)\norm{\Phi-\widetilde{\Phi}}_{L^{p^*}(\mathcal{M},\nu_M)}\leq c_1c_3(p)c_L\norm{\mathcal{E}}_{L^p(\mathcal{M},\nu_M;\mathcal{U})}\,.
\end{equation*}
The term involving normalization constants can be bounded using inequalities $\log(1 + x)\leq x\,, \forall x\geq0$, $|e^{-x_1} - e^{-x_2}|\leq|x_1-x_2|\,,\forall x_1,x_2\geq0$, and H\"older's inequality,
\begin{align*}
    \tcircle{A} &\leq c_2(1)\left|\mathbb{E}_{M\sim\nu_M}\left[\exp(-\Phi(m))-\exp(-\widetilde{\Phi}(m))\right]\right|\\
    &\leq c_2(1)\norm{\Phi-\widetilde{\Phi}}_{L^{p/2}(\mathcal{M},\nu_M)}
    \leq c_1c_2(1)c_L\norm{\mathcal{E}}_{L^p(\mathcal{M},\nu_M;\mathcal{U})}\,.
\end{align*}
Therefore, the bound $\mathcal{E}_{\text{post}}\leq c_{\text{BIP}}\norm{\mathcal{E}}_{L^p(M,\nu_M;\mathcal{U})}$ holds for $c_{\text{BIP}} = c_1\left(c_2(1) + c_3(p)\right)c_L$.
\end{proof}

We note that the constants $c_1$ and $c_L$ are associated with the error in the evaluation of the potential as defined in~\eqref{eq:gaussian_noise}, while $c_2$ and $c_3$ are associated with the error in the evaluation of the model evidence. The magnitude of the bounding constant is, generally speaking, dictated by the magnitude of the data misfit term $\norm{\boldsymbol{y}^* - \boldsymbol{y}^{\text{pred}}}_{\bC_{\bN}^{-1}}$, with $\boldsymbol{y}^{\text{pred}}$ being the predicted data, over the prior distribution for both the model and the surrogate. This constant is large for challenging Bayesian inverse problems that have uninformative prior, high-dimensional data, small noise corruption, or inadequate models. In these cases, the posterior distributions are typically highly localized and/or much different than the prior distributions. This is somewhat expected as neural operators are trained with data generated from prior distributions, and the performance of neural operators is likely to deteriorate when posterior sampling is concentrated in areas of the parameter space where training samples are relatively sparse. Additionally, Bayesian inverse problems are susceptible to the approximation error originating from overfitting to training samples, regardless of whether they adequately resolve the region of the parameter space with high likelihood values.

This result implies that significant magnification of the approximation error, when propagated through challenging Bayesian inverse problems, is likely. It further implies that the empirically observed accuracy ceiling for neural operator training might not be sufficient for the accuracy requirements of these types of Bayesian inverse problems. We note that this is often not the case for conventional surrogate modeling techniques, for which similar results are derived with the additional assumption that the error in Bayesian inverse problem solutions generated via a surrogate model asymptotically diminishes due to the asymptotic diminishing of the surrogate approximation error as $n_{\text{train}}\to\infty$; see~\cite{Yan2017, Marzouk2009, Stuart2017,teckentrup2020convergence}.
\section{Residual-based error correction of neural operator predictions}

Methods for estimating error, and subsequently correcting the error, in approximations of solutions to the forward problems fall into the category of \textit{a posteriori} error estimation and typically involve computing residuals representing the degree to which approximate solutions fail to satisfy the forward problems; i.e., residuals evaluated at the approximate solutions. In this section, we propose a strategy, following earlier works on \textit{a posteriori} error estimation techniques in \cite{oden2002estimation, jha2022goal}, that enhances the application of neural operators in infinite-dimensional Bayesian inverse problems using the PDE residual.

In particular, we ask, given a neural operator prediction $\operator_{\boldsymbol{w}}(m)\in\mathcal{U}$ at $m\in\mathcal{M}$ that is reasonably close to the true solution $\mathcal{F}(m)\in\mathcal{V}_u$, if it is possible to cheaply compute a \textit{corrected} solution, $u_C\in\mathcal{V}_u$, that has a significantly smaller approximation error, so that the mapping $m\to u_C(m)$ can more reliably achieve the accuracy requirements for deployment in Bayesian inverse problems. It turns out that if the residual operator $\mathcal{R}$ as in~\eqref{eq:residual} has a sufficiently regular derivative, a correction can be computed by solving a linear variational problem based on $\mathcal{R}$ at $\operator_{\boldsymbol{w}}(m)$, and this correction may lead to quadratic error reduction. For highly nonlinear problems, computing one correction step is inexpensive relative to evaluating the forward operator, thus this approach retains substantial speedups for Bayesian inverse problems.

In what follows, we first introduce the linear variational problem associated with the error correction. We show that it is a Newton step under some conditions, and thus the Newton--Kantorovich theorem can be directly applied to understand the error reduction. Then an application of this general procedure for neural operators is discussed. We discuss the possibility of conditions on the approximation error of a trained neural operator that ensures global quadratic error reduction of the correction step. Next, we make a connection between the error correction problem and \textit{a posteriori} error estimation techniques for estimating modeling error. Lastly, an analysis of the computational cost of MCMC sampling of the posterior using an error-corrected neural operator is given.

\subsection{The residual-based error correction problem}
Consider an expansion of the residual operator $\mathcal{R}$ in the state space $\mathcal{U}$, assuming for now that $\mathcal{R}$ is at least twice Gate\^aux-differentiable with respect to the state variable. For an arbitrary pair $(u, m)\in\mathcal{U}\times\mathcal{M}$, the first and second order derivatives with respect to the state variable, $\delta_u\mathcal{R}(u,m):\mathcal{U}\to\mathcal{U}_0^*$ and $\delta^2_u\mathcal{R}(u,m):\mathcal{U}\times\mathcal{U}\to\mathcal{U}_0^*$, are given by,
\begin{equation}
\begin{split}
    \delta_u\mathcal{R}(u,m)v &= \lim_{\theta\to 0}\frac{1}{\theta}\left(\mathcal{R}(u+\theta v, m)-\mathcal{R}(u,m)\right)\,,\quad\forall v\in\mathcal{U}\,,\\
    \delta_u^2 \mathcal{R}(u,m)(v, q) &= \lim_{\theta\to 0}\frac{1}{\theta}\left(\delta_u\mathcal{R}(u+\theta q,m)v - \delta_u\mathcal{R}(u,m)v\right)\,, \quad\forall v,q\in\mathcal{U}\,.
\end{split}
\end{equation} 
The Taylor series expansion of $\mathcal{R}(\mathcal{F}(m), m)$ in terms of $u$ involving up to second-order derivative of $\mathcal{R}$ is given by
\begin{equation}\label{eq:residual_expansion}
\begin{split}
    \underbrace{\mathcal{R}(\mathcal{F}(m), m)}_{=0} &= \mathcal{R}(u,m) +  \delta_u\mathcal{R}(u,m)(\mathcal{F}(m)-u) \\
    &\qquad + \int_0^1 \delta^2_u\mathcal{R}(u + s (\mathcal{F}(m)-u), m)(\mathcal{F}(m)-u, \mathcal{F}(m)-u) (1 - s) \dd s.
\end{split}
\end{equation}
In the regime where the higher order terms are small, such as where $\norm{\mathcal{F}(m)-u}_{\mathcal{U}}$ is small, the following approximation can be made
\begin{equation}\label{eq:high-order_approx}
    \begin{aligned}
    \delta_u\mathcal{R}(u,m)\mathcal{F}(m) &= -\mathcal{R}(u,m) + \delta_u\mathcal{R}(u,m)u + O(\norm{\mathcal{F}(m) -u}_{\mathcal{U}}^2)\\
    &\approx -\mathcal{R}(u,m) + \delta_u\mathcal{R}(u,m)u\,.
    \end{aligned}
\end{equation}
This leads to the following linear variational problem, which we refer to as the \textit{residual-based error correction problem}, to compute the new estimation $u_C$ of $\mathcal{F}(m)$ given an initial estimation $u\in\mathcal{U}$:
\begin{equation}~\label{eq:error_correction}
    \text{Given } m\in\mathcal{M} \text{ and }u\in\mathcal{U},\text{ find } u_C\in \mathcal{V}_u \text{ such that } \quad    \delta_u\mathcal{R}(u,m)u_C = -\mathcal{R}(u,m) + \delta_u\mathcal{R}(u,m)u\,.
\end{equation}

Assuming the existence of a unique solution to this linear variational problem under Lax--Milgram theorem~\cite[pg. 310]{Ciarlet2013}, solving this problem is equivalent, under some additional conditions, to generate a single Newton iteration for the nonlinear equation \eqref{eq:residual} reformulated as
\begin{equation}\label{eq:newton_fixed_point}
\text{Given } m\in\mathcal{M}, \text{ find } v\in \mathcal{U}_0 \text{ such that}\quad \widetilde{\mathcal{R}}(v,m) = 0\,,\quad\widetilde{\mathcal{R}}(v,m)\coloneqq\mathcal{R}(v + u_L,m)\,,
\end{equation}
for a given $u_L\in\mathcal{V}_u$. The Newton iteration $\{v_j\in\mathcal{U}_0\}_{j=0}^{\infty}$ is given by
\begin{equation}
    v_{j+1} = v_j - \delta_v\widetilde{\mathcal{R}}(v_j,m)^{-1}\widetilde{\mathcal{R}}(v_j,m)\,,\quad j>0\,.
\end{equation}
Consider one Newton step with $u - u_L - u_{\perp}$, where $\mathcal{U}_0^{\perp}$ is the orthogonal complement of $\mathcal{U}_0$, and $u_{\perp}$ is the orthogonal projection of $u-u_L$ to $\mathcal{U}_0^{\perp}$.
\begin{equation*}
    v_C = u - u_L - u_{\perp} - \delta_v \widetilde{\mathcal{R}}(u - u_L  - u_{\perp},m)^{-1}\widetilde{\mathcal{R}}(u - u_L - u_{\perp},m)\,,
\end{equation*}
The equivalency $v_C = u_C - u_L$ can be established when, for example,  (i) $\mathcal{U} = \mathcal{U}_0 = \mathcal{V}_u$, (ii) $u\in\mathcal{V}_u$, i.e., $u$ has the correct strongly-enforced boundary and initial conditions with $u_{\perp}= 0$, or (iii) $\mathcal{R}(u + \mathcal{U}_0^{\perp},m) \equiv \mathcal{R}(u,m)$, i.e., the residual operator is invariant to changes in the strongly-enforced boundary and initial conditions of $u$. 

If the equivalency between the Newton step problem and the error correction problem can be established, the latter can be understood in the setting of the Newton--Kantorovich theorem in Banach spaces; see, \cite{Ortega1968, Ciarlet2013} and references therein. The theorem gives sufficient conditions for the quadratic convergence of a Newton iteration starting at $u$ for solving the fixed point problem associated with~\eqref{eq:newton_fixed_point}. It implies that within a regime in which $u$ is sufficiently close to $\mathcal{F}(m)$, the solution to the error correction problem is guaranteed a quadratic error reduction, i.e.,
\begin{equation}
    \norm{\mathcal{F}(m)-u_C}_{\mathcal{U}}\leq c\norm{\mathcal{F}(m) - u}_{\mathcal{U}}^2
\end{equation}
for some constant $c>0$. Consequently, we expect $u_C$ to have a much smaller error than $u$ in such a regime.

We remark that in the rare cases where the equivalency of the two problems cannot be established, one may still retain the conditional quadratic error reduction property for the error correction problem, but perhaps with stronger conditions than those stated by the Newton--Kantorovich theorem. This situation may arise in practice, for example, when both conditions for equivalency suggested above fail, and the numerical implementation of the orthogonal projection from $\mathcal{\mathcal{U}}$ to $\mathcal{V}_u$ is not straightforward. The stronger conditions for quadratic error reduction should incorporate additional sensitivity factors for when the initial position of a Newton iteration orthogonally deviates from the domain of the Jacobian operator of the nonlinear system.

\subsection{Error correction of neural operator prediction}\label{ss:neural_op_corr}

For well-designed and well-trained neural operators, we expect they are within the regime where the approximation error is small. However, in such a regime, the reduction of the approximation error via neural network construction or training is ineffective, as discussed in Section 3.2. On the other hand, solving a residual-based error correction problem returns a significant reduction in approximation error that may not be reliably achieved in the training phase of neural operators. This reduction of the approximation errors may lead to crucial improvements in the quality of the solutions to Bayesian inverse problems accelerated by neural operators. To this end, we consider the following linear problem that returns a neural operator with error correction $\operator_C:\mathcal{M}\ni m\mapsto u_C\in\mathcal{V}_u$:
\begin{equation}\label{eq:error_correction_neural_operators}
\begin{aligned}
    &\text{Given }m\in\mathcal{M}, \text{ find }u_C\in\mathcal{V}_u\text{ such that }\\
    &\qquad\qquad\delta_u\mathcal{R}(\operator_{\boldsymbol{w}}(m), m)u_C = -\mathcal{R}(\operator_{\boldsymbol{w}}(m), m) + \delta_u\mathcal{R}(\operator_{\boldsymbol{w}}(m), m)\operator_{\boldsymbol{w}}(m) ,
\end{aligned}
\end{equation}
i.e., we let $u = \operator_{\boldsymbol{w}}(m)$ in~\eqref{eq:error_correction}. The mapping $\operator_C$ is then used in likelihood evaluations at $m$ for solving Bayesian inverse problems:
\begin{equation}
    \mathcal{L}(m;\boldsymbol{y})\mapsto \widetilde{\mathcal{L}}_C(m;\boldsymbol{y}^*)\coloneqq \pi_N(\boldsymbol{y}^*-\bdmc{B}(\operator_{C}(m))) .
\end{equation}

Under the framework of the Newton--Kantorovich theorem, if the following two conditions are satisfied at states that are nearby $\operator_{\boldsymbol{w}}(m)$ for all $m\in\mathcal{M}$: (i) the neural operator error correction problems have unique solutions under the Lax--Milgram theorem, and (ii) the residual derivative is locally Lipschitz continuous with respect to the state variables, then for $\mathcal{F}\in L^{\infty}(\mathcal{M},\nu_M;\mathcal{U})$ there exists a global convergence radius upper bound $r_C > 0$ that depends on the properties of $\mathcal{R}$ such that 
\begin{equation}
    \norm{\mathcal{E}}_{L^\infty(\mathcal{M},\nu_M;\mathcal{U})}<r_C\implies \norm{\mathcal{E}_C}_{L^{\infty}(\mathcal{M},\nu_M;\mathcal{U})}\leq c_R\norm{\mathcal{E}}^{2}_{L^{\infty}(\mathcal{M},\nu_M;\mathcal{U})}\,,
\end{equation}
where $\mathcal{E}_C(m)\coloneqq \operator_C(m)-\mathcal{F}(m)$, $\nu_M$-a.e., and $c_R>0$ is a constant. We state the full form of this corollary in Appendix B.

We note that even though the global convergence radius upper bound is enforced in the $L^{\infty}$ Bochner norm, which may not be compatible with the operator learning setting, the global quadratic error reduction is observed in practice for neural operators trained in the $L^2$ Bochner norm with pre-asymptotic accuracy; see, e.g., Figure~\ref{fig:nonlinearpoisson_nns} and Figure~\ref{fig:hyperelasticity_nns} in our numerical examples to be specified in Section 5. We expect that these results are natural consequences of the reduced basis representation built into the neural operator architectures, in which the operator learning problem can be essentially represented in finite-dimensional coefficient spaces, even though the optimization is typically performed in the $L^2(\mathcal{M},\nu_M;\mathcal{U})$ topology. Moreover, in the cases where the global convergence radius upper bound is small, trained neural operators with pre-asymptotic accuracy may not be sufficient for a guaranteed global quadratic error reduction via error correction. In these cases, performance-improving techniques for neural operator construction and training can be crucial. Even though these techniques may not produce neural operators with arbitrary accuracy, they may be able to reduce the approximation error so that its norm is smaller than the global convergence radius upper bound for a guaranteed quadratic error reduction according to the Newton--Kantorovich theorem.

\subsection{Connection to goal-oriented a posteriori error estimation}

The procedure we described to correct the predictions of a trained neural operator is based on the estimation of modeling error using the goal-oriented a posteriori error estimates that provide computationally inexpensive estimates of the error in the quantities of interests (QoIs), see \cite{prudhomme1999goal, prudhomme2003computable, becker2001optimal, oden2002estimation, oden2001goal, jha2022goal, ainsworth1997posteriori, Ainsworth-2000, rannacher1997feed, van2011goal, giles2002adjoint, pierce2000adjoint}. Originally, a goal-oriented error estimation technique was developed \cite{becker2001optimal, rannacher1997feed, ainsworth1997posteriori, Ainsworth-2000, oden2001goal, prudhomme1999goal} to perform mesh adaption based on the specific measure of error -- error in QoIs -- as opposed to the energy norm. Techniques were developed to adapt the mesh to control the error in QoIs which is not necessarily possible by relying on the energy norm. It was soon realized that goal-oriented error estimates can be used to estimate the modeling error -- error in predictions via the fine (high-fidelity) and coarse (low-fidelity) models -- further expanding the utility of the technique; see \cite{oden2002estimation, jha2022goal}. Here we briefly cover the topic of estimation of modeling error to relate to the development in previous subsections. 

Suppose $u\in \mathcal{U}$ is a solution to the variational problem, $\mathcal{R}(u) = 0\, \in \mathcal{U}^\ast$, assuming test and trial spaces are the same for simplicity, and $p\in \mathcal{U}$ is the solution to the dual problem: $-\dualDot{\delta_u \mathcal{R}(u)v}{p} = \dualDot{\delta_u \mathcal{Q}(u)}{v}$ $\forall v\in \mathcal{U}$, where $\langle\cdot,\cdot\rangle$ denotes a duality pairing. Here $\mathcal{Q}:\mathcal{U}\to\R$ is a differentiable QoI functional. For any arbitrary functions $\widetilde{u}, \widetilde{p} \in \mathcal{U}$ the following holds,
\begin{equation}
    \mathcal{Q}(u) - \mathcal{Q}(\widetilde{u}) = \dualDot{\mathcal{R}(\widetilde{u})}{p} + r(u, p, \widetilde{u}, \widetilde{p}) \approx \dualDot{\mathcal{R}(\widetilde{u})}{p},
\end{equation}
where $r$ is the remainder term involving derivatives of residual and errors in forward and dual solutions, $e = u - \widetilde{u}$ and $\varepsilon = p - \widetilde{p}$, respectively. When $e$ and $\varepsilon$ are sufficiently small, $r$ can be ignored. We note that several versions of such estimates as the equation above can be derived; see \cite{oden2002estimation}. While estimates like the above provide an approximation of the QoI error, they still require access to the pair of the forward and dual solutions, $(u,p)$. Because $u$ can be written as $u = \widetilde{u} + e$, if the error $e$ can be approximately computed, referring to the approximation of $e$ by $\hat{e}$, then $u$ can be approximated using $u \approx \widetilde{u} + \hat{e}$. The approximation $\hat{e}$ of the error $e$ can be obtained by solving the following linear variational problem:
\begin{equation}\label{eq:error_approx_goal_oriented}
    \text{Find }\hat{e}\in\mathcal{U}\text{ such that}
    \quad \delta_u \mathcal{R}(\widetilde{u})\hat{e} = -\mathcal{R}(\widetilde{u}) \qquad \in \mathcal{U}^*\,.
\end{equation}
The same steps can be used to also compute $p$ approximately following the discussion in the earlier references. 

The above problem is formally derived via the linearization of the nonlinear variational problem similar to the one presented in Section 4.1. The steps discussed above are somewhat similar to estimating the error $u_C-\operator_{\boldsymbol{w}}(m)\approx \mathcal{F}(m) - \operator_{\boldsymbol{w}}(m)$ given a neural operator prediction $\operator_{\boldsymbol{w}}(m)$ proposed in Section \ref{ss:neural_op_corr}. However, one needs to also consider the error in $\mathcal{U}^{\perp}$ for when $\mathcal{U}_0\subset\mathcal{U}$, as the neural operator prediction $u$ may not satisfy the strongly-enforced boundary or initial conditions. Note that the error correction problem is essentially an extended error estimation problem, where one solves for the error in the affine space $\mathcal{U}_0-\widetilde{u}$, thus similar to an error estimate that includes the error of $\widetilde{u}$ at those boundaries.

\subsection{Discussion of computational costs}

In this subsection, we briefly discuss the computational costs when the error-corrected neural operator is deployed as a surrogate of the forward operator in Bayesian inverse problems, in particular the expected computational speedups in MCMC sampling of the posterior. The neural operator construction and training have offline costs due to training data generation, building or finding appropriate architecture, and training the neural operator. The cost of generating training data is the same as the cost of solving the PDEs and sampling from the prior; here we neglect the cost due to the latter. The additional computation may be required for finding reduced bases that involve expectations, as is done in the network strategies discussed in \cite{OLearyRoseberryVillaChenEtAl2022,OLearyRoseberryDuChaudhuriEtAl2021}, which we use in our numerical examples presented in Section 5. The cost of constructing an appropriate architecture is more abstract, since this procedure may involve repeating many different training runs for different architectural hyperparameters. 

In the case that reduced basis neural operator methods are used, the training cost can be made independent of the discretization dimensions $d_m,d_u$, and instead be made to scale with the dimension of the reduced bases, since in this case the training data can be projected into these bases in a pre-training step. For this reason, the cost of neural network training can be ignored asymptotically for very high-dimensional discretization. We note that neural network training may represent a significant cost in the pre-asymptotic regime, i.e. on the order of the cost of training data generation, depending on the type of architecture used and the number of iterations required to train a sufficiently accurate neural operator.

The cost of posterior sampling to acquire a Markov chain of size $n_\text{chain}$, according to our settings in Section 2.2, is dictated by the cost of solving the governing PDEs for likelihood evaluations. The total cost is a sum of the online posterior sampling cost and the offline cost:
\begin{equation}
    \text{Total Cost } = \left(n_\text{chain}\times \text{Evaluation Cost}\right) + \text{Offline Cost}\,.
\end{equation}
The speedups realized in practice by using a neural operator are measured by the ratio of the total cost of MCMC sampling using the forward operator to the total cost of MCMC sampling using a neural operator,
\begin{equation}
    \text{Speedup}_{NO} = \frac{\left(n_\text{chain}\times \text{Cost}_{PDE}\right) }{\left(n_\text{chain}\times \text{Cost}_{NO}\right) + \text{Offline Cost}}\,.
\end{equation}

In the regime where a large number of likelihood evaluations are required for generating a Markov chain of length $n_{\text{chain}}$, the offline cost of neural operator training is likely negligible in relation to the online cost of posterior sampling. Assuming the acceptance rates are the same for Markov chains generated by the forward operator with a neural operator as its surrogate, the speedup of the neural operator asymptotically becomes the ratio between the averaged evaluation cost for the forward operator and the neural operator over the posterior distribution.
\begin{equation}
    \text{Asymptotic Speedup }_{NO} \approx \frac{\text{Cost}_{PDE}}{\text{Cost}_{NO}}\,.
\end{equation}
In general, the cost ratio of the PDE solves to the neural operator can be very high, for highly nonlinear problems with fine discretization, i.e., large $d_u$ and $d_m$, in particular, if a dimension-independent neural operator strategy is used. 

When a neural operator with error correction is used as a surrogate of the forward operator in posterior sampling, the asymptotic speedup is approximately equal to the number of iterations within a given iterative scheme for nonlinear equations used for evaluating the forward operator, averaged over the posterior distribution. We assume at each iteration a linear problem of a similar size and numerical properties to the error correction problem is solved. Therefore, the asymptotic speedup is given by
\begin{equation}
    \text{Asymptotic Speedup}_{ECNO} = \frac{\text{Cost}_{PDE}}{\text{Cost}_{NO}+ \text{Cost}_{EC}} = \frac{N_\text{nonlinear} \times \text{Cost}_{EC}}{\text{Cost}_{NO}+ \text{Cost}_{EC}} \approx N_\text{nonlinear}\,.
\end{equation}
The proposed method thus yields favorable speedups for models governed by highly nonlinear PDEs with fine discretization, where evaluating the forward operator requires many iterative solves, and the posterior sampling requires many queries of the forward operator.

\section{Numerical Examples}
In this section, we demonstrate through numerical examples how the residual-based error correction both improves the accuracy of posterior representations of challenging Bayesian inverse problems relative to using a trained neural operator alone, and offers a computational advantage over directly using the expensive forward model in likelihood evaluations. The first example is concerned with inferring the uncertain coefficient field in a nonlinear reaction--diffusion problem with a cubic reaction term. The second example involves the inference of Young's modulus as a spatially-varying uncertain field for a hyperelastic neo-Hookean material undergoing deformation. For each of the two examples, we generate samples from the posterior distributions for likelihood functions defined using (i) model predictions, (ii) predictions by three trained neural operators whose measured accuracy is close to the ceiling for a given architecture with an increasing number of training samples, and (iii) error-corrected predictions of these neural operators. We visualize and compare the accuracy of their posterior predictive means, and analyze the computational cost.

In what follows, we first discuss the neural operator architecture and training as well as the software used in our calculations. Then the physical, mathematical, and numerical settings of the two examples are provided. Lastly, we present the visualization of posterior predictive means and cost analysis.

\subsection{Derivative-informed reduced basis neural operator}\label{section:derivate_neural_operator}

We consider a derivative-informed reduced basis neural network~\cite{OLearyRoseberry2020,OLearyRoseberryVillaChenEtAl2022,OLearyRoseberryDuChaudhuriEtAl2021} for constructing neural operators in the two numerical examples. This neural network architecture uses both derivative information and principal information of the solution map to construct appropriate reduced bases for the inputs and outputs. This strategy exploits the compactness of the forward operator, if it exists, in order to construct a parsimonious, mesh-independent neural network approximation, making it a suitable strategy for learning discrete representations of mappings between function spaces. The input reduced basis uses derivative information to detect subspaces of the inputs that the outputs are most sensitive in expectation over $\nu_M$. The output reduced basis is proper orthogonal decomposition (POD) which is an optimal linear basis for learning operators in the $L^2$ norm \cite{ManzoniNegriQuarteroni2016,QuarteroniManzoniNegri2015}. The neural operator learns a nonlinear coefficient mapping between these two subspaces via an adaptively constructed and trained ResNet \cite{OLearyRoseberryDuChaudhuriEtAl2021}. These networks provide a reliable way of detecting appropriate breadth for a map by detecting useful bases directly from the map, instead of during neural network training. The adaptive training adds layers in a way that only marginally perturbs the existing coefficient mapping, allowing one to adaptively detect appropriate nonlinearity (depth) until overfitting is detected. We only consider this network since it gives a good practical performance, outperforming other conventional reduced basis strategies and overparametrized networks, while suffering from the same limitations that are typical of neural operators in general: limitations in achieving high accuracy given more data, and approximation power. It is thus general enough to demonstrate the efficacy of our proposed method. 

For both numerical examples, we use networks with reduced basis dimensions of $50$, and $10$ nonlinear ResNet layers, each with a layer rank of $10$, and softplus activation functions. The networks are trained and constructed adaptively, one ResNet layer at a time, using the Adam optimizer with a learning rate of $10^{-3}$, and gradient batch size of $32$. Each adaptive layer training problem executes $100$ epochs, starting from $2$ layers to $10$, with one final end-to-end training, accounting for a total of $1000$ epochs. The efficacy of this adaptive approach is demonstrated in \cite{OLearyRoseberryDuChaudhuriEtAl2021}. The networks are trained for varying availability of training data, which are reported when appropriate. For both problems a derivative sensitivity basis of rank $50$ is computed via samples using matrix-free randomized algorithms,
\begin{equation}
    \mathbb{E}_{M\sim\nu_M}\left[\delta\mathcal{F}^*(M)\delta\mathcal{F}(M)\right] \approx \frac{1}{n_\text{sample}}\sum_{j=1}^{n_\text{sample}}\delta\mathcal{F}^*(m_j)\delta\mathcal{F}(m_j)\,,
\end{equation}
where $\delta\mathcal{F}^*(\cdot)$ denotes the adjoint of the derivative of $\mathcal{F}$. In both cases $n_\text{sample} = 256$ samples are used, and the additional computational costs are taken into account when computing the offline cost of neural operators. The matrix-free actions of $\delta\mathcal{F}^*(M)\delta\mathcal{F}(M)$ only require solving linearized PDEs similar to that of the error correction problem and can be computed at training data sample points, thus ensuring a reasonable computational efficiency of the matrix-free action evaluations. 

We note that unlike \cite{OLearyRoseberryDuChaudhuriEtAl2021}, we do not employ the additional stochastic Newton optimizer \cite{OLearyRoseberryAlgerGhattas2019,OLearyRoseberryAlgerGhattas2020} since this is not typically used in neural operator learning. However, we observe substantially the performance improvement of neural operators trained with this optimizer. Moreover, unlike \cite{OLearyRoseberryChenVillaEtAl2022} this network does not include derivative approximations during the formulation of the operator learning problem and training, and the derivative information is only used in the architecture. For more information on the neural network architectures employed in the two numerical examples, see \cite{OLearyRoseberryDuChaudhuriEtAl2021,OLearyRoseberryVillaChenEtAl2022}.

\subsection{Software}

The following software is used to implement the numerical examples: FEniCS \cite{alnaes2015fenics} for finite element method, hIPPYlib \cite{VillaPetraGhattas2018} for prior and posterior sampling, and hIPPYflow~\cite{hippyflow} in combination with TensorFlow~\cite{tensorflow2015-whitepaper} for neural operator construction and training. We note that the derivative of the residual operator is implicitly derived and implemented using the automatic differentiation of weak forms supported by FEniCS and Unified Form Language (UFL)~\cite{Alnaes2014}.

\subsection{Inverting a coefficient field of a nonlinear reaction--diffusion problem}\label{ss:poisson}
Here we introduce the setting for a demonstrative Bayesian inverse problem. The model is governed by an equilibrium reaction--diffusion equation with a nonlinear cubic reaction term. The model solutions are driven by the Dirichlet boundary conditions at the top and bottom of the square domain $\Omega = (0,1)^2$ and the coefficient field $\kappa:\Omega\to\R_+$:
\begin{subequations}
\begin{align}
    -\nabla \cdot \kappa(\boldsymbol{x})\nabla u(\boldsymbol{x}) + u(\boldsymbol{x})^3 &= 0, && \boldsymbol{x}\in \Omega\,;\\
    \kappa(\boldsymbol{x})\grad u(\boldsymbol{x}) \cdot\boldsymbol{n} &= 0, && \boldsymbol{x}\in \Gamma_{l}\cup \Gamma_{r}\,;\\
    u(\boldsymbol{x}) &= 1, && \boldsymbol{x}\in \Gamma_{t}\,;\\
    u(\boldsymbol{x}) &= 0, && \boldsymbol{x}\in \Gamma_{b}\,,
\end{align}
\end{subequations}
where $\Gamma_t$, $\Gamma_r$, $\Gamma_b$, and $\Gamma_l$ denote the top, right, bottom, and left boundary of the domain, and $\boldsymbol{n}$ is the outward normal vector. We assume prior knowledge of an epistemically uncertain and spatially-varying coefficient field $ K:\Omega\to\R_+$, following a log-normal prior distribution:
\begin{equation}
    K = \exp(M) \,,\qquad M\sim \nu_M\coloneqq\mathcal{N}(0, \mathcal{C}_{\text{pr}})\,,
\end{equation}
where $M$ is the parameter random variable through which we represent the uncertainty in $\kappa$. The prior distribution $\nu_M$ is specified with a covariance constructed according to~\eqref{eq:gaussian_prior} with hyperparameters of $d = 2$, $\alpha = 0.08$, $\beta =2$, and $\boldsymbol{\Theta} = \boldsymbol{I}$. They correspond to isotropic Gaussian random fields with a pointwise variance of approximately $1$, correlation length of approximately $0.2$, and small boundary artifacts. We visualize several samples of the prior as well as their corresponding coefficient fields and model solutions via the finite element method, to be specified later, in Figure~\ref{fig:poisson_prior}.

\begin{figure}[H]
    \centering
    \addtolength{\tabcolsep}{-6pt} 
    \begin{tabular}{ccccc}
        $M\sim \nu_M$ & $K$ & $\mathcal{F}^h(M)$ & $|\mathcal{F}^h(M) - \operator_{\boldsymbol{w}}(M)|$ & $|\mathcal{F}^h(M) - \operator_{C}^h(M)|$\\
        \includegraphics[width = 0.19\linewidth]{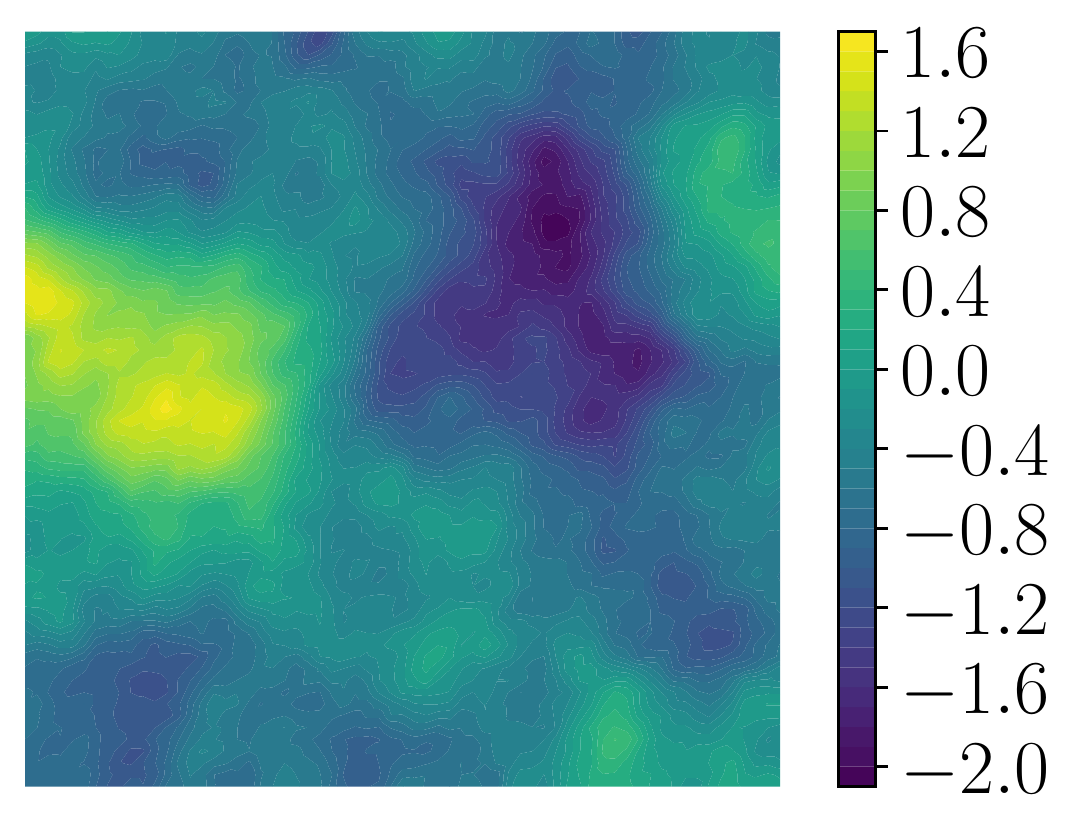}  & \includegraphics[width = 0.19\linewidth]{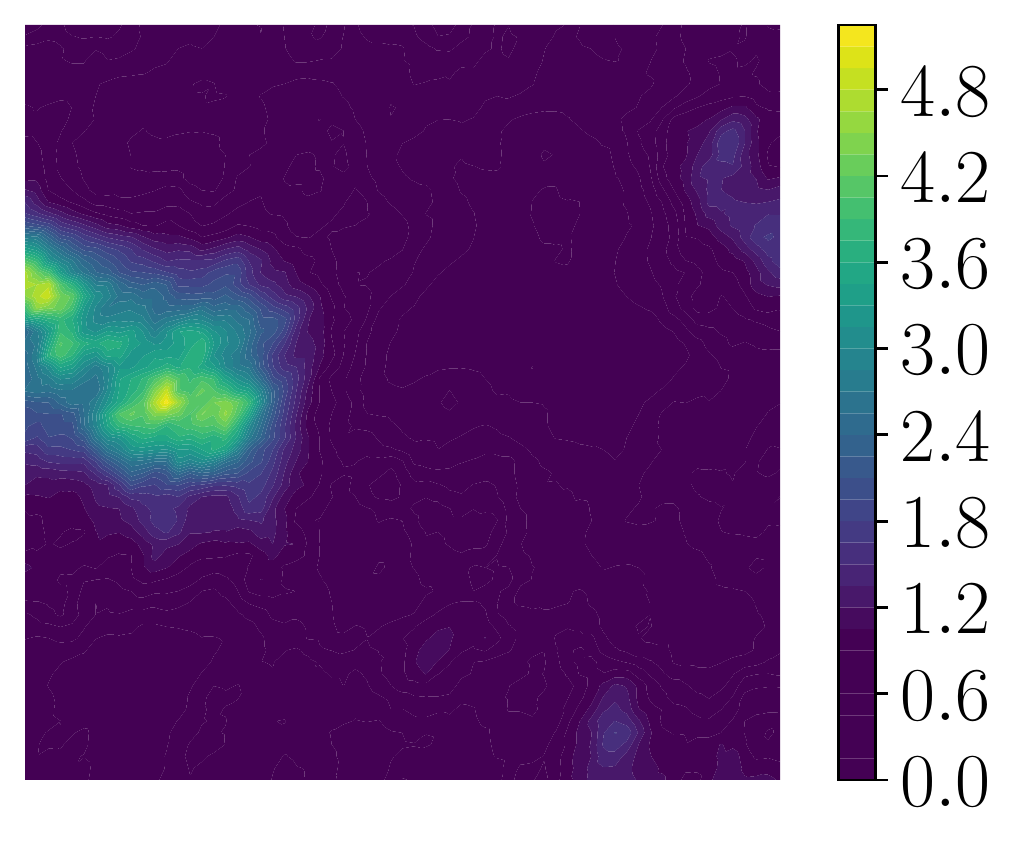} & \includegraphics[width = 0.19\linewidth]{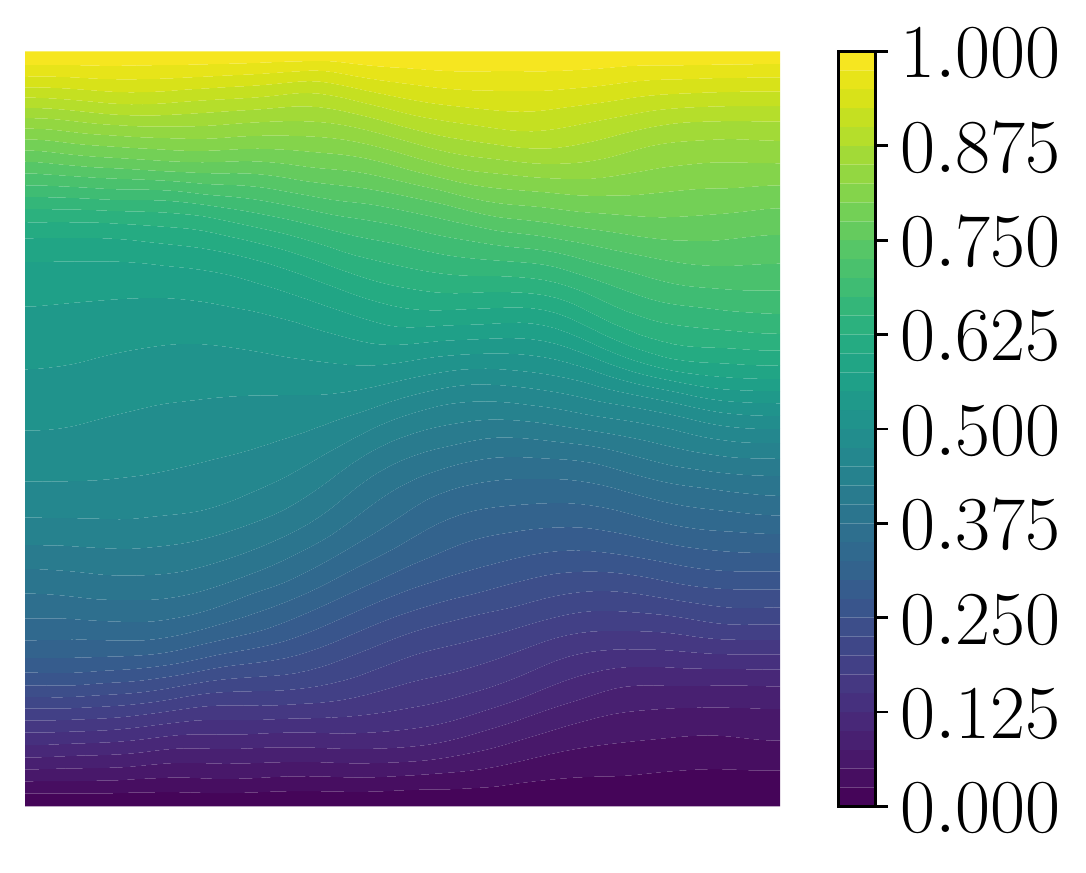} & \includegraphics[width = 0.19\linewidth]{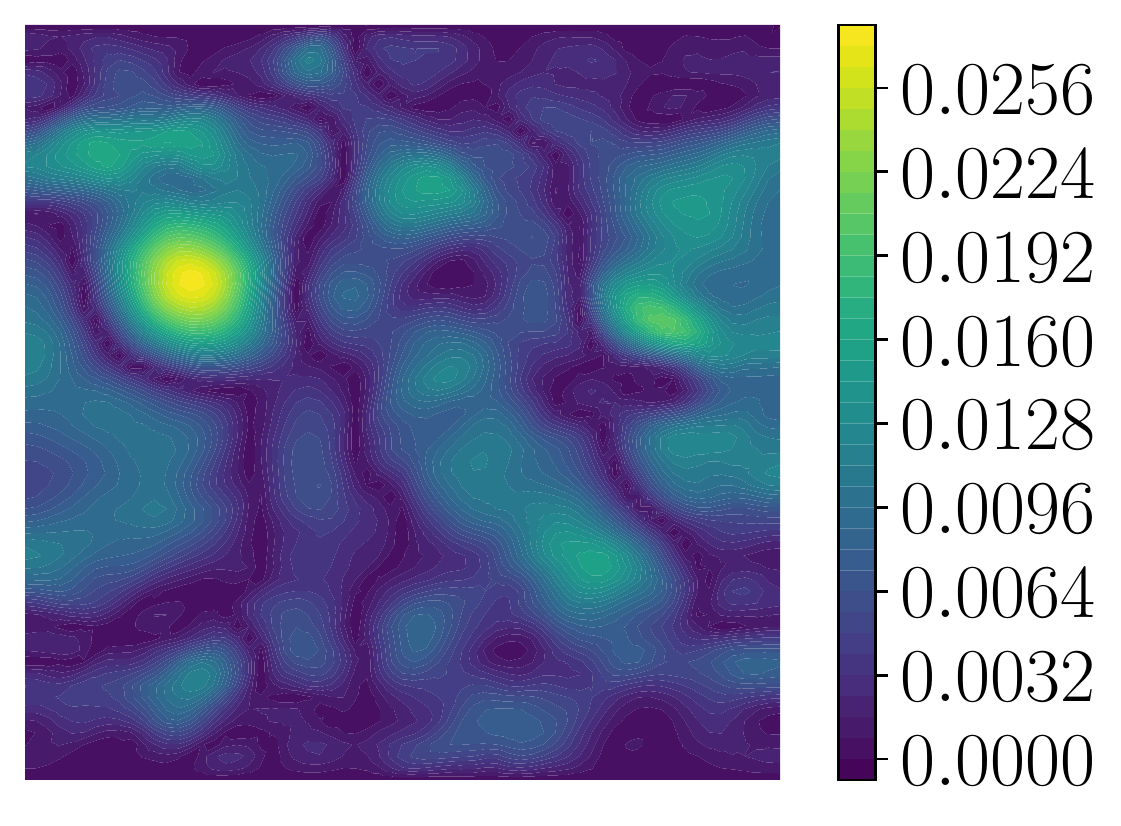} & \includegraphics[width = 0.19\linewidth]{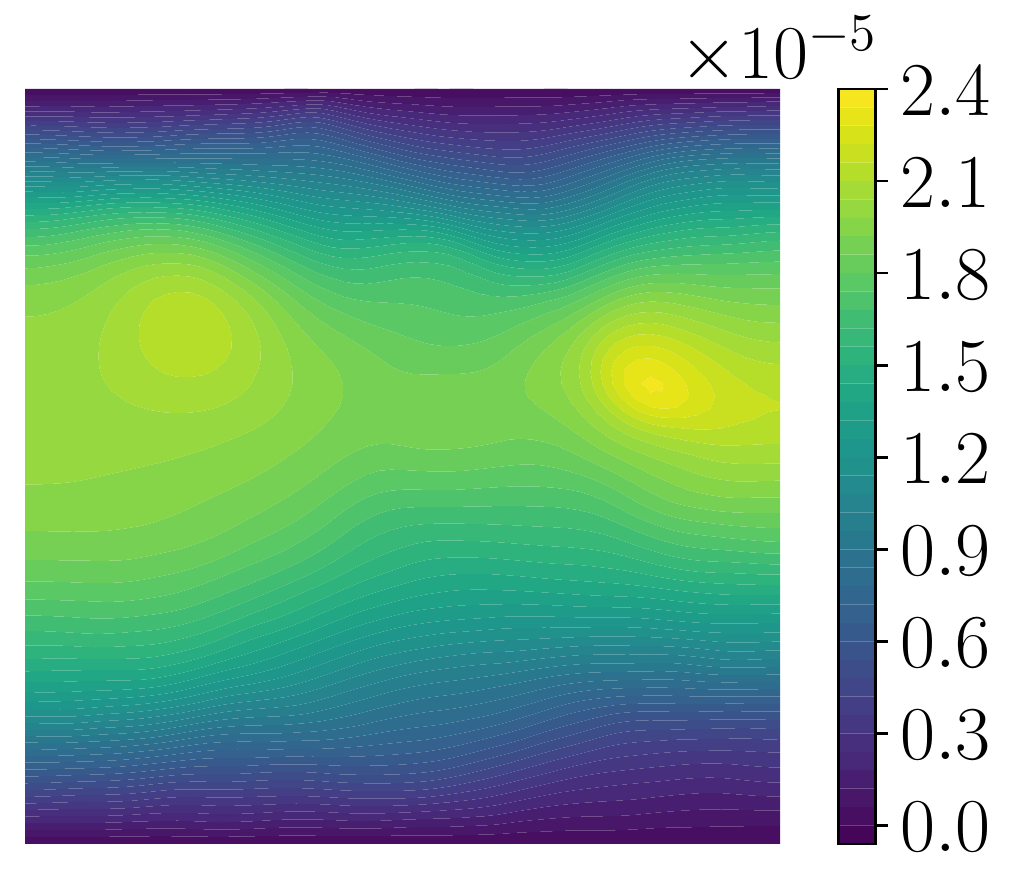}\\
        \includegraphics[width = 0.19\linewidth]{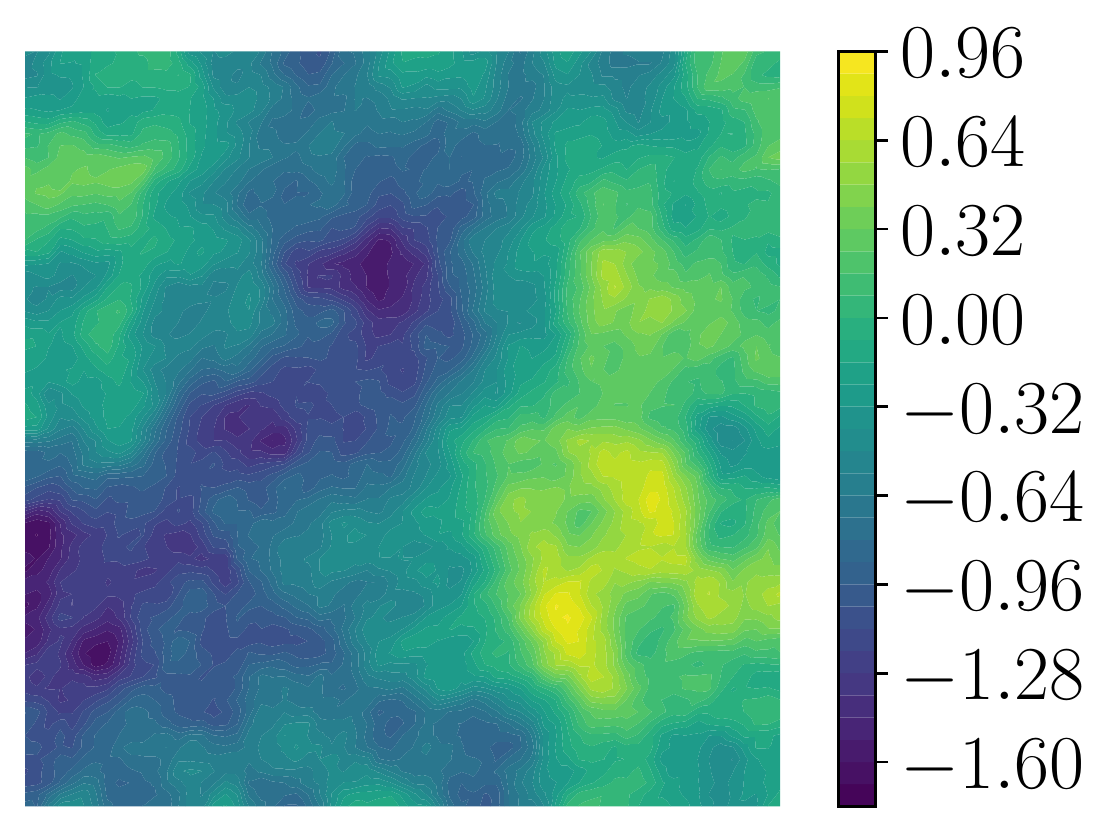} &\includegraphics[width = 0.19\linewidth]{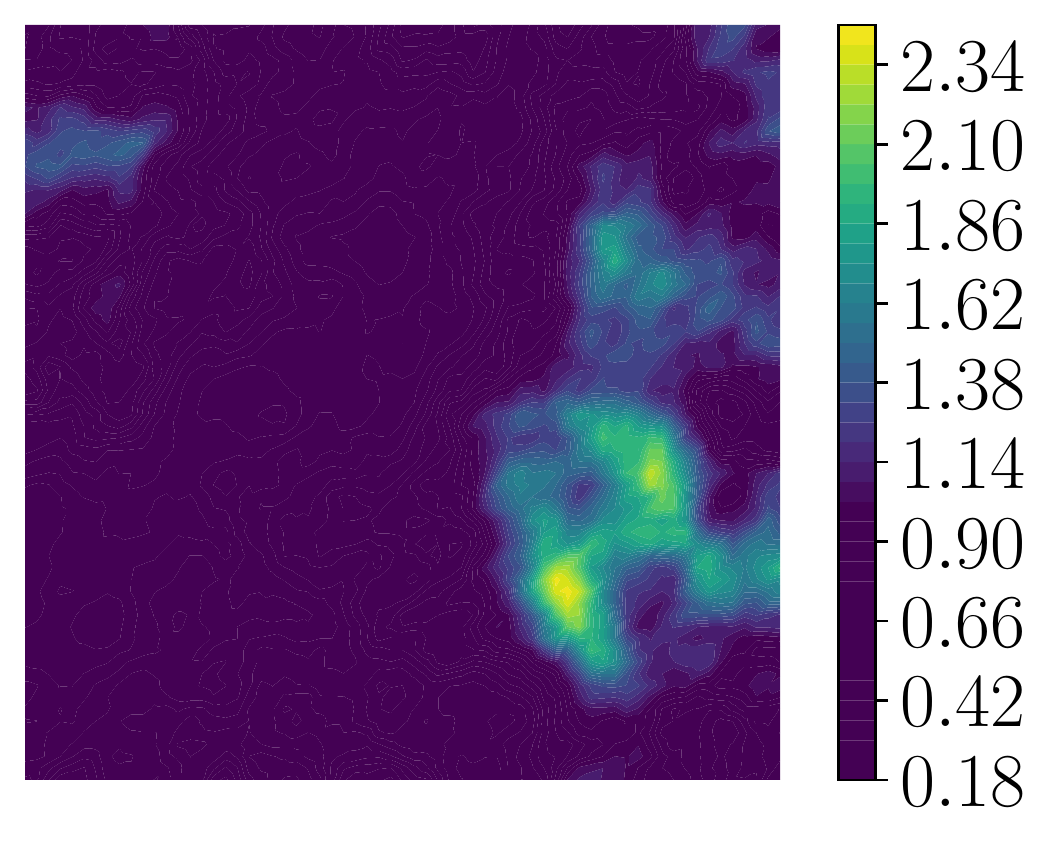} &\includegraphics[width = 0.19\linewidth]{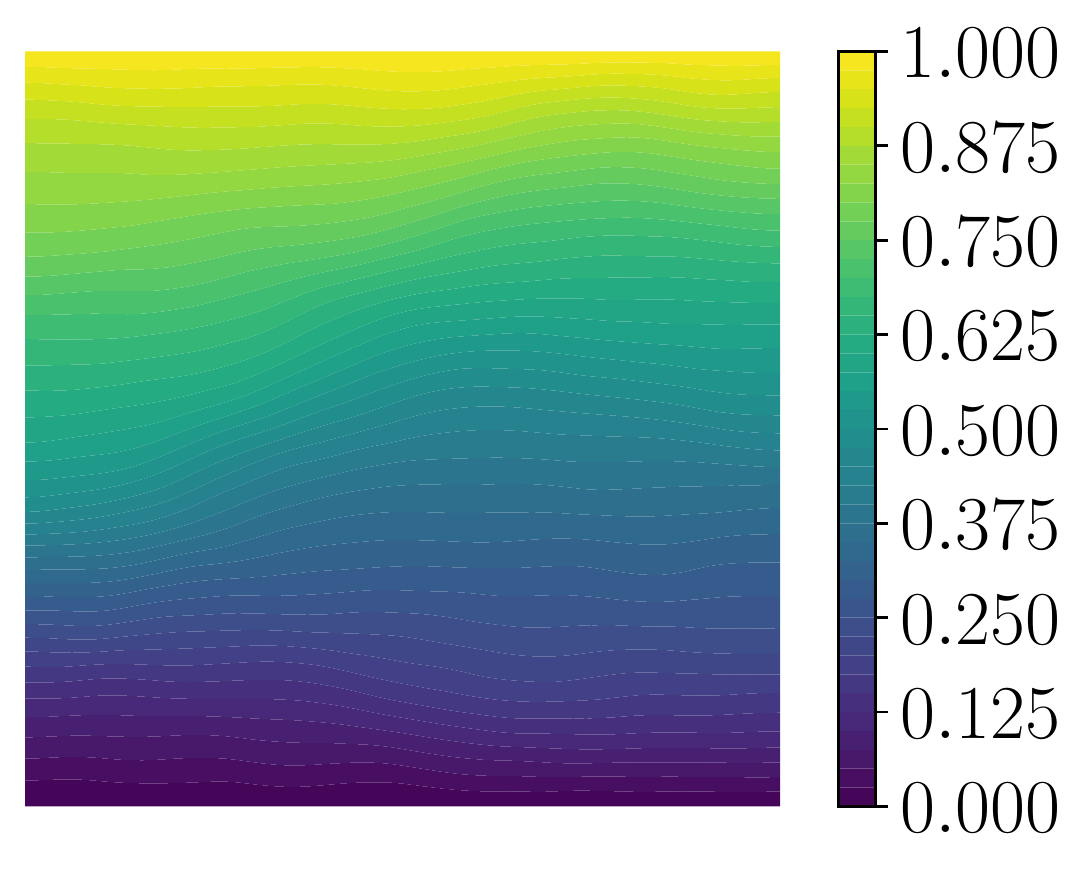} & \includegraphics[width = 0.19\linewidth]{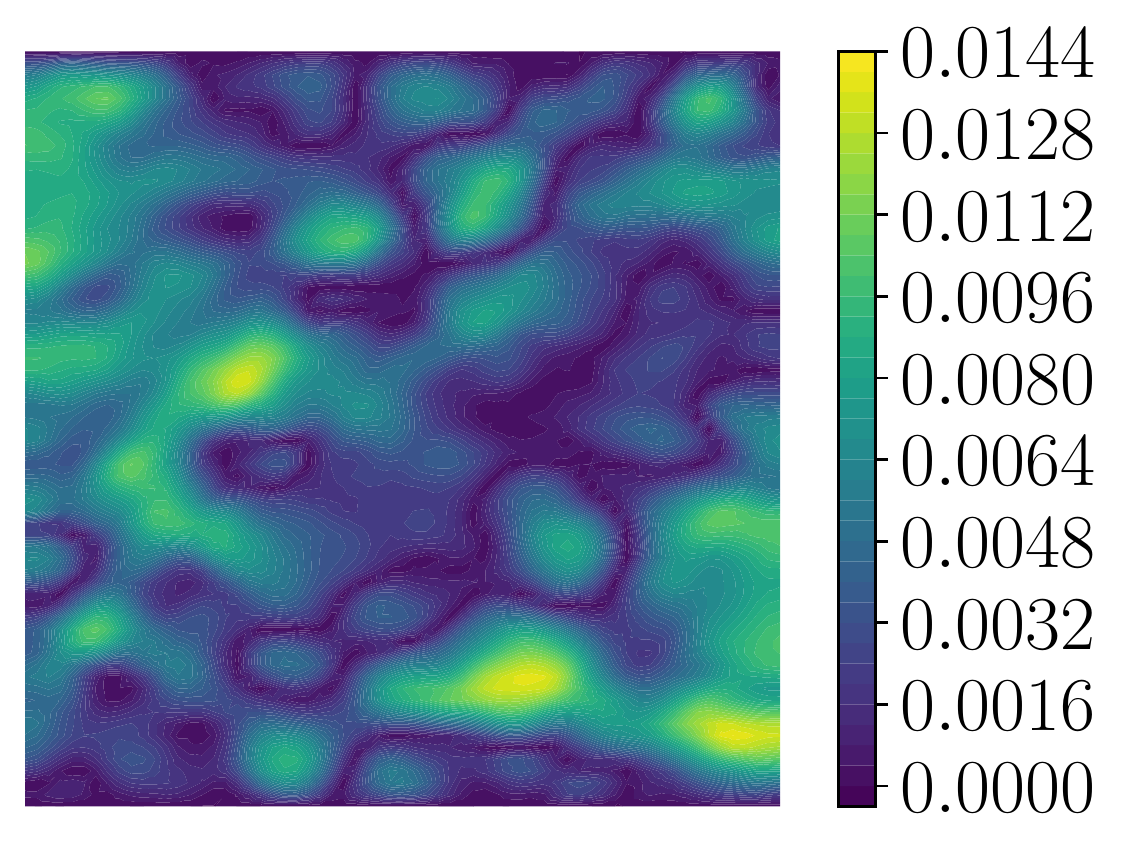} & \includegraphics[width = 0.19\linewidth]{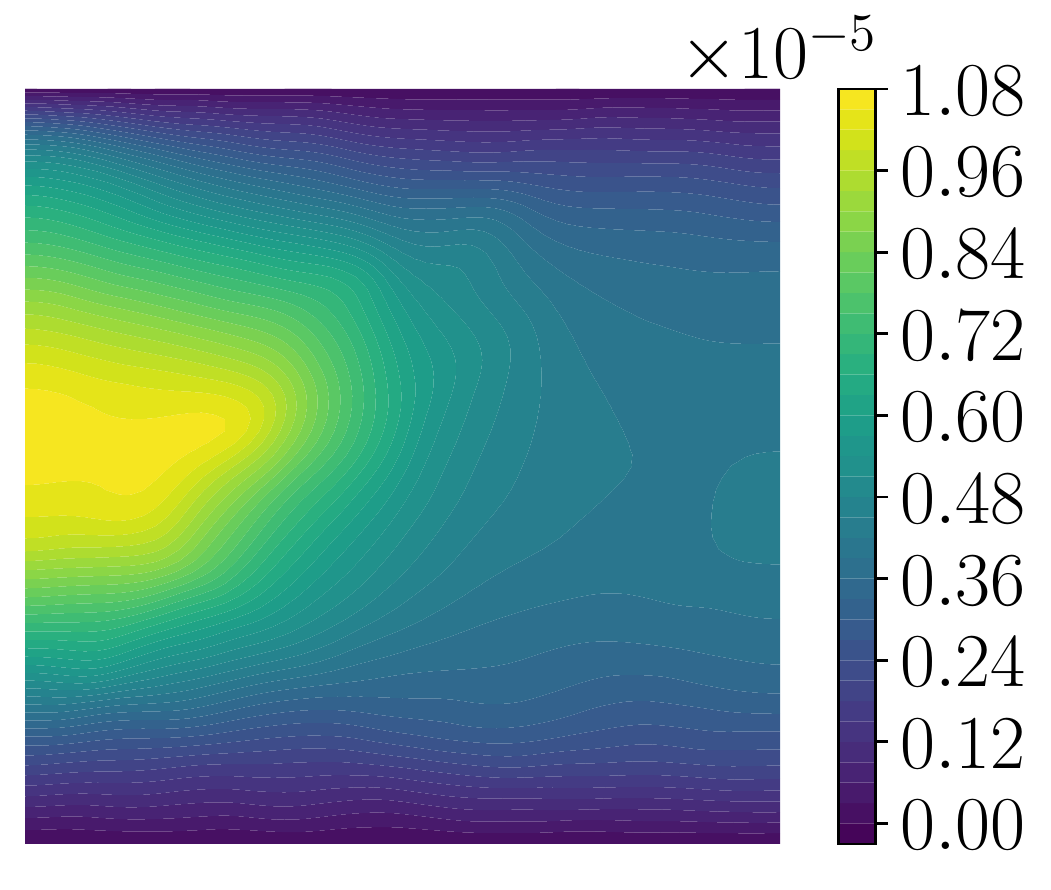}\\
        \includegraphics[width = 0.19\linewidth]{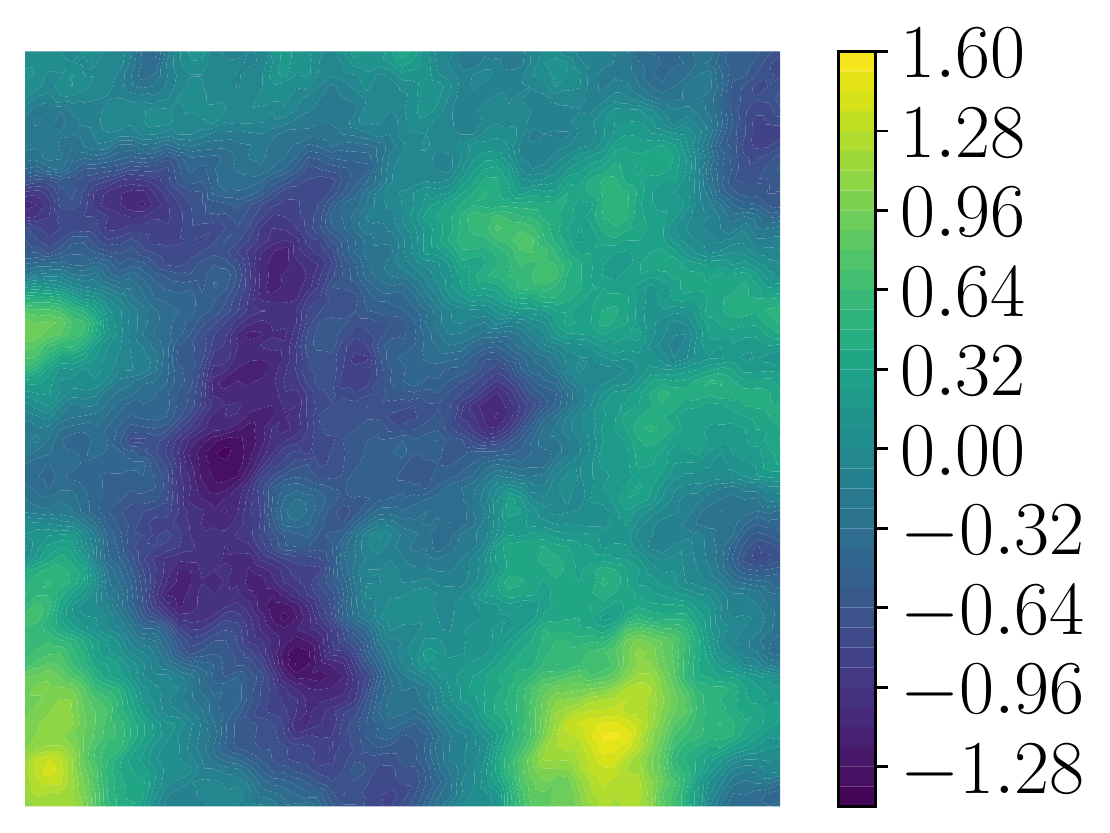} & \includegraphics[width = 0.19\linewidth]{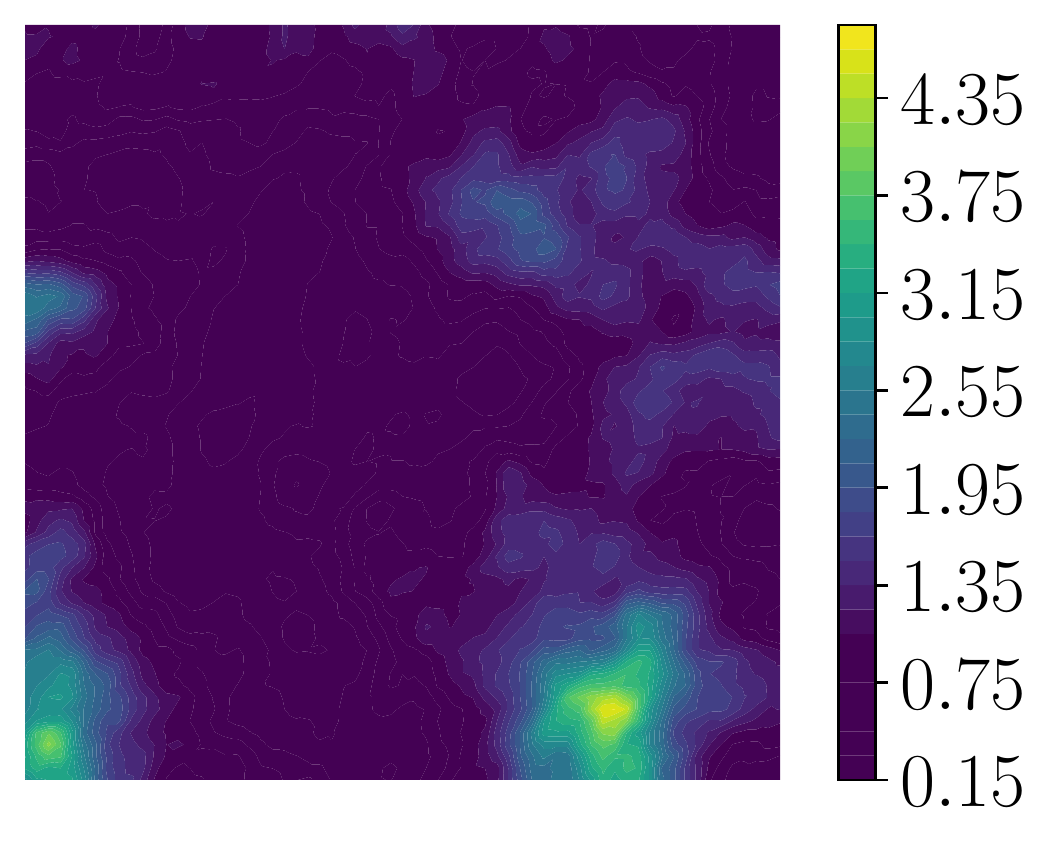} &\includegraphics[width = 0.19\linewidth]{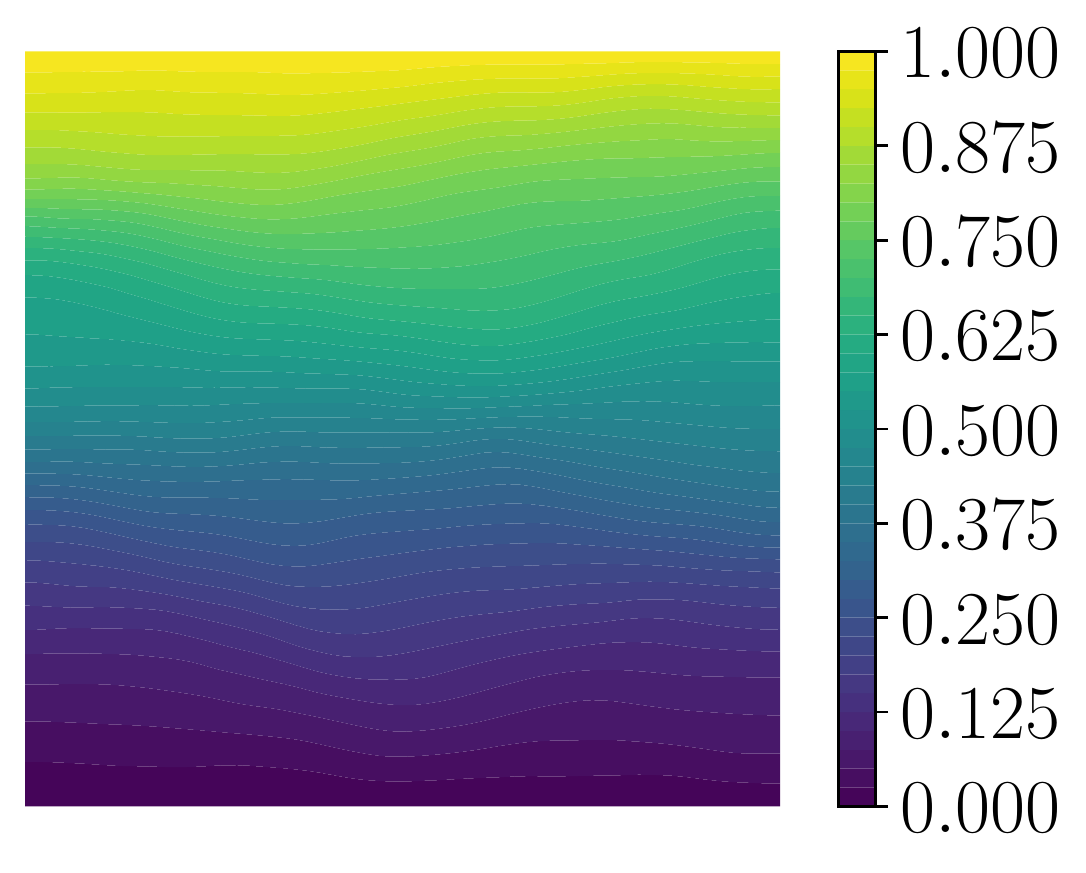} & \includegraphics[width = 0.19\linewidth]{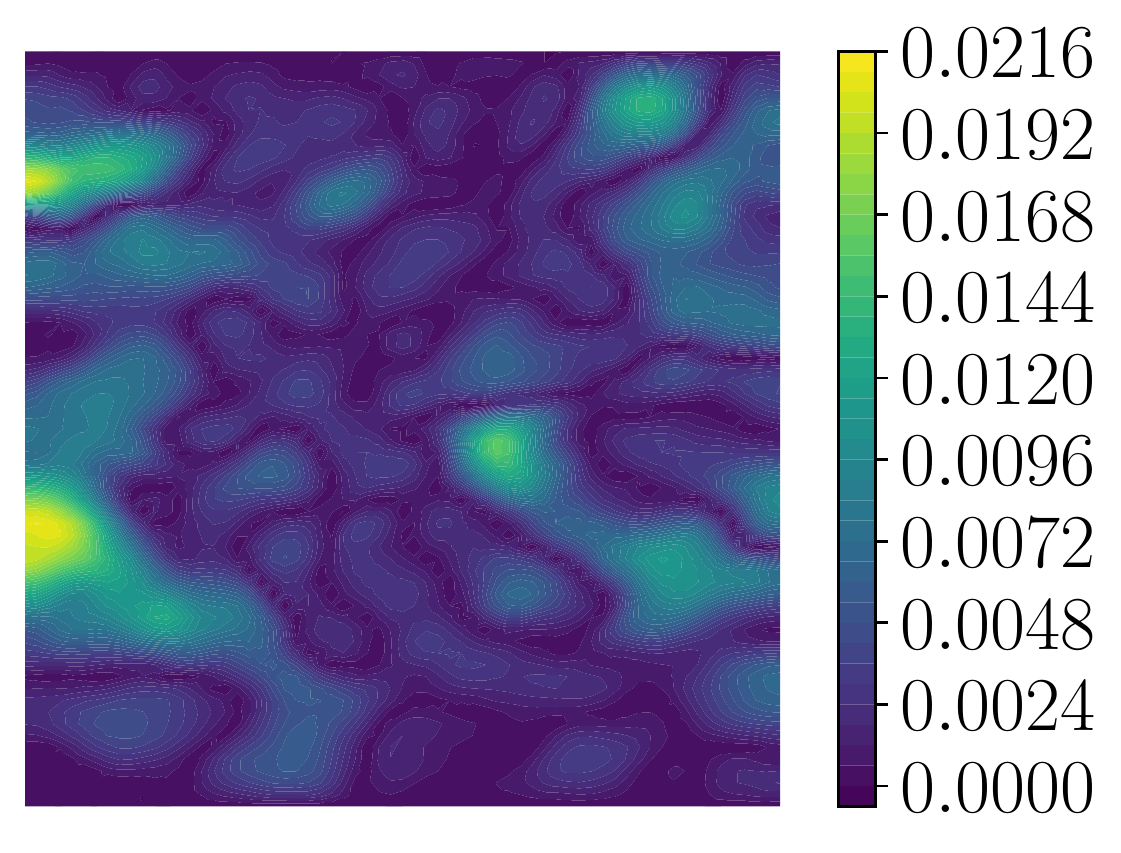} & \includegraphics[width = 0.19\linewidth]{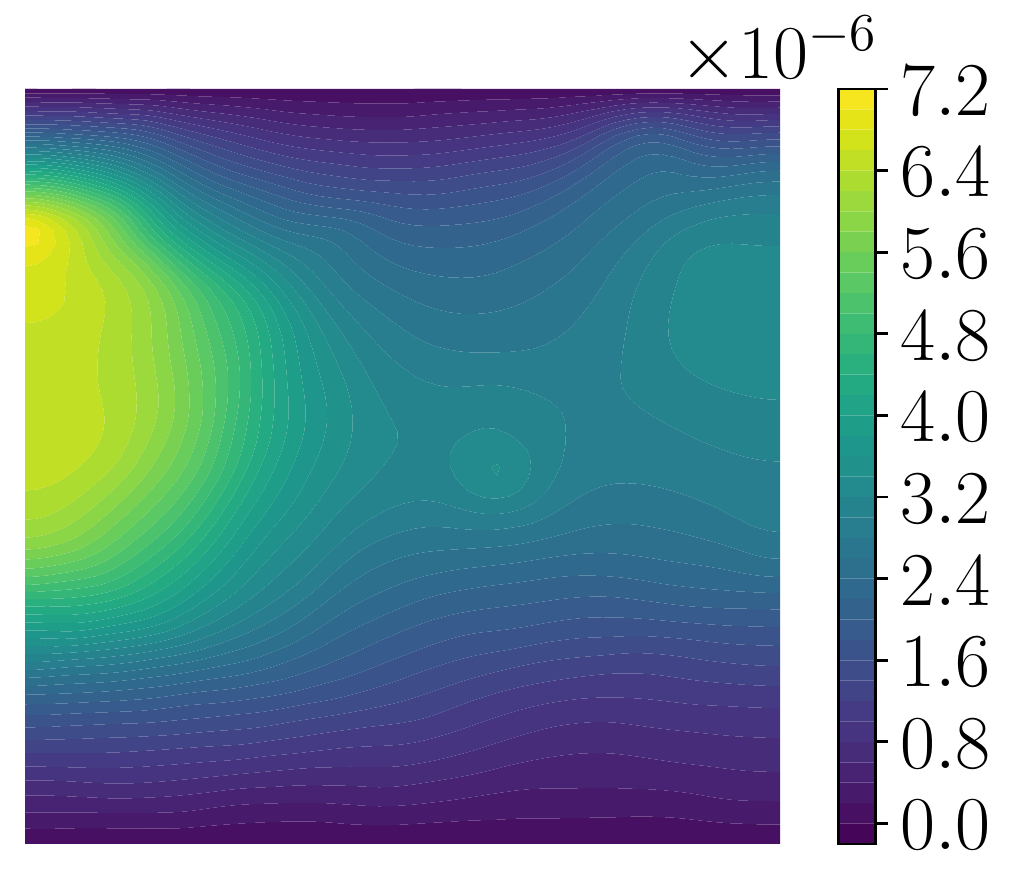}\\
    \end{tabular}
    \addtolength{\tabcolsep}{6pt} 
    \caption{Visualization of the prior samples, model solutions, and neural operator performance with and without error correction for the reaction--diffusion problem introduced in Section~\ref{ss:poisson}. From left to right, we have (i) three Gaussian random fields, $m_j$, $j=1,2,3$, sampled from the prior distribution $\nu_M$ with approximately a pointwise variance of $1$ and correlation length of $20\%$ of the domain size length, each occupies a row, (ii) the corresponding coefficient field samples, $\kappa_j$, defined by $\kappa_j = \exp(m_j)$, (iii) the corresponding finite element solutions of the reaction--diffusion problem, $\mathcal{F}^h(m_j)$, (iv) the absolute prediction errors at $m_j$ for the \textit{best performing} neural operator ($\sim90\%$ accurate as shown in Figure~\ref{fig:nonlinearpoisson_nns}), $|\mathcal{F}^h(m_j) - \operator_{\boldsymbol{w}}(m_j)|$, and (v) the absolute errors at $m_j$ for the error-corrected predictions based on the best performing neural operator.}
    \label{fig:poisson_prior}
\end{figure}

We consider the following parameter space, state space, and solution set for the nonlinear PDE problem above:
\begin{equation}
\begin{gathered}
    \mathcal{M} \coloneqq L^2(\Omega)\,,\quad
    \mathcal{U} \coloneqq H^1(\Omega)\,,\\
    \mathcal{U}_0 \coloneqq  \{u\in H^1(\Omega):u|_{\Gamma_b} = 0 \text{ and } u|_{\Gamma_t} = 0\}\,,\quad \mathcal{V}_u\coloneqq \{u\in H^1(\Omega):u|_{\Gamma_b} = 0 \text{ and } u|_{\Gamma_t} = 1\}\,,
\end{gathered}
\end{equation}
where the restriction to the boundary is defined with the trace operator. The variational problem for the model is given by:
\begin{equation}\label{eq:poissonWeakForm}
\begin{aligned}
    &\text{Given } m\in \mathcal{M}, \text{ find }u \in \mathcal{V}_u \text{ such that }\\
	&\qquad\dualDot{\mathcal{R}(u, m)}{v}\coloneqq\int_{\Omega} \exp(m)\grad u(\boldsymbol{x}) \cdot\grad v(\bx)\,\dd \bx + \int_\Omega u(\bx)^3 v(\bx)\,\dd\bx = 0\,,\quad\forall v\in\mathcal{U}_0\,.
\end{aligned}
\end{equation}

\subsubsection{Numerical approximation and neural operator performance} 
The numerical evaluation of the forward operator, $\mathcal{F}^h$, sampling of the prior distribution, $\nu_M^h$, and the residual-based error correction problem is implemented via the finite element method. In particular, the domain $\Omega$ is discretized with $64\times64$ cells of uniform linear and parabolic Lagrangian triangular elements that form the finite element spaces $\mathcal{M}^h\subset\mathcal{M}$ and $\mathcal{U}^h\subset\mathcal{U}$, respectively. The state finite element space has $d_u = 16641$ degrees of freedom and the parameter finite element space has $d_m = 4225$. The variational problems of the model, prior sampling, and error correction are then approximated and solved in these finite element spaces. We employ the Newton iteration for solving the nonlinear reaction--diffusion problem, which takes on an average of $2.5$ iterations to converge for parameter samples generated from the prior distribution.

Applying neural operator construction and training specified in Section~\ref{section:derivate_neural_operator} to the reaction--diffusion problem, we produced $7$ neural operators using increasing number of training samples, $n_{\text{train}} = 100$, $201$, $403$, $806$, $1382$, $3225$, $6912$, assuming $p=2$ in the Bochner norm. In Figure~\ref{fig:nonlinearpoisson_nns}, the accuracy of the trained neural operators at different numbers of training samples is shown. The accuracy number is computed according to~\eqref{eq:t_accuracy} using $512$ samples from the prior distribution that are unseen during training. The accuracy ceiling around $90\%$ is reached at $n_{\text{train}} = 1382$.

For each trained neural operator, the accuracy for the error-corrected neural operators using the same 512 samples is also computed and shown in Figure~\ref{fig:nonlinearpoisson_nns}. The error-corrected mapping for all $6$ trained operators are close to $100\%$ accurate. The visualization of absolution errors for the predictions by the best performing neural operator with $n_{\text{train}} = 6912$ and its error-corrected predictions at samples from the prior distribution are shown in Figure~\ref{fig:poisson_prior}. We observe that the error correction step leads to a drop of maximum absolute pointwise error from the order of $10^{-2}$ to the order of $10^{-5}$.

\begin{figure}[H]
\center
\includegraphics[width = 0.6\textwidth]{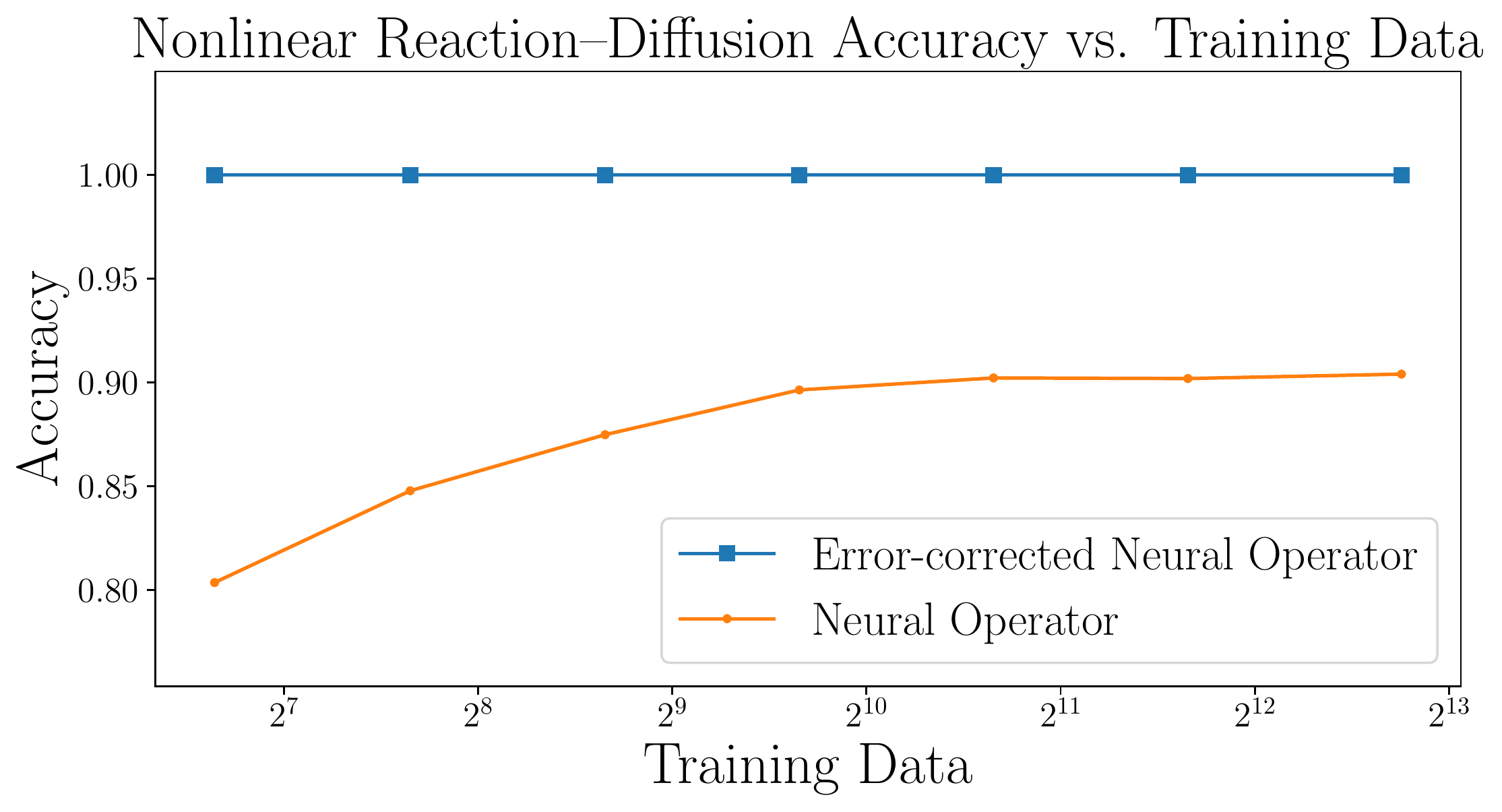}
\caption{A study of the generalization accuracy, as defined in~\eqref{eq:t_accuracy}, of $7$ neural operators trained using a varying number of training samples for the nonlinear reaction--diffusion problem. The neural operators are constructed using a derivative-informed projected ResNet; see Section 5.1 for details on its construction and training. Generalization accuracy is computed using $512$ data unseen during training. An empirical accuracy ceiling of $\sim 90\%$ is reached for the given neural network architecture.
}
\label{fig:nonlinearpoisson_nns}
\end{figure}

\subsubsection{Bayesian inverse problem setting}\label{subsec:poisson_inverse}
We consider a set of synthetic observation data $\boldsymbol{y}^*$ generated according to the data model in~\eqref{eq:data_model} for the reaction--diffusion problem at a synthetic parameter fields $m^*$. It has distinctive curvatures generated using a Rosenbrock function. We visualize the synthetic parameter $m^*$ as well as its corresponding coefficient field $\kappa^*$ and model solution $u^* = \mathcal{F}^h(m^*)$ in Figure~\eqref{fig:nonlinearpoisson_setting}.

To complete the data model, we define an observation operator and a noise distribution. Here we consider a linear observation operator that extracts discrete observations of the model solution at a uniform grid of $10\times 10$ points in the domain $\Omega$. Let $\{\boldsymbol{x}_j\}_{j=1}^{100}$ denote the observation points. Given a function $u\in\mathcal{U}$, the observation operator $\bdmc{B}(u) \in \R^{100}$ returns local averages of a given state around the observation points:
\begin{equation}
    \bdmc{B}(u) = \begin{bmatrix}
    \displaystyle |B_r(\bx_1)|^{-1}\int_{B_r(\bx_1)} u(\bx)\,\dd\bx &  \cdots & \displaystyle|B_r(\bx_{100})|^{-1}\int_{B_r(\bx_{100})} u(\bx)\,\dd\bx
    \end{bmatrix} \,,
\end{equation}
where $B_r(\bx_j)$ is a circle with a small radius $r>0$ centered at the observation point and $|B_r(\bx_j)|$ is the size of the circle. We assume the observation is corrupted with white noise, i.e., $\boldsymbol{N} \sim \mathcal{N}(\boldsymbol{0}, \sigma^2\boldsymbol{I})$, with a standard deviation of $\sigma = 0.0073$ that is $1\%$ of the maximum value in $\bdmc{B}(u^*)$.

\begin{figure}[H]
    \centering
    \addtolength{\tabcolsep}{-6pt} 
    \begin{tabular}{cccc}
        $m^*$ & $\kappa^*$ & $\mathcal{F}^h(m^*)$ & $\boldsymbol{y}^*\in\R^{100}$ \\
        \includegraphics[width = 0.24\linewidth]{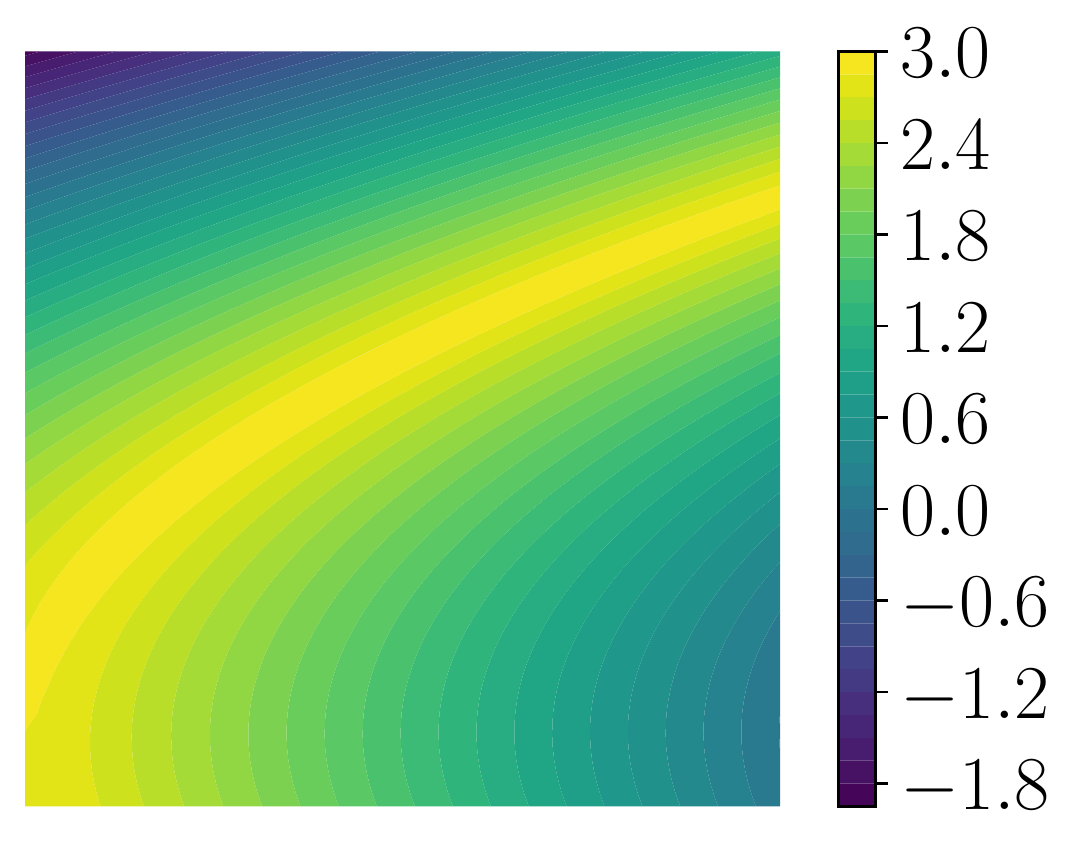} & \includegraphics[width = 0.24\linewidth]{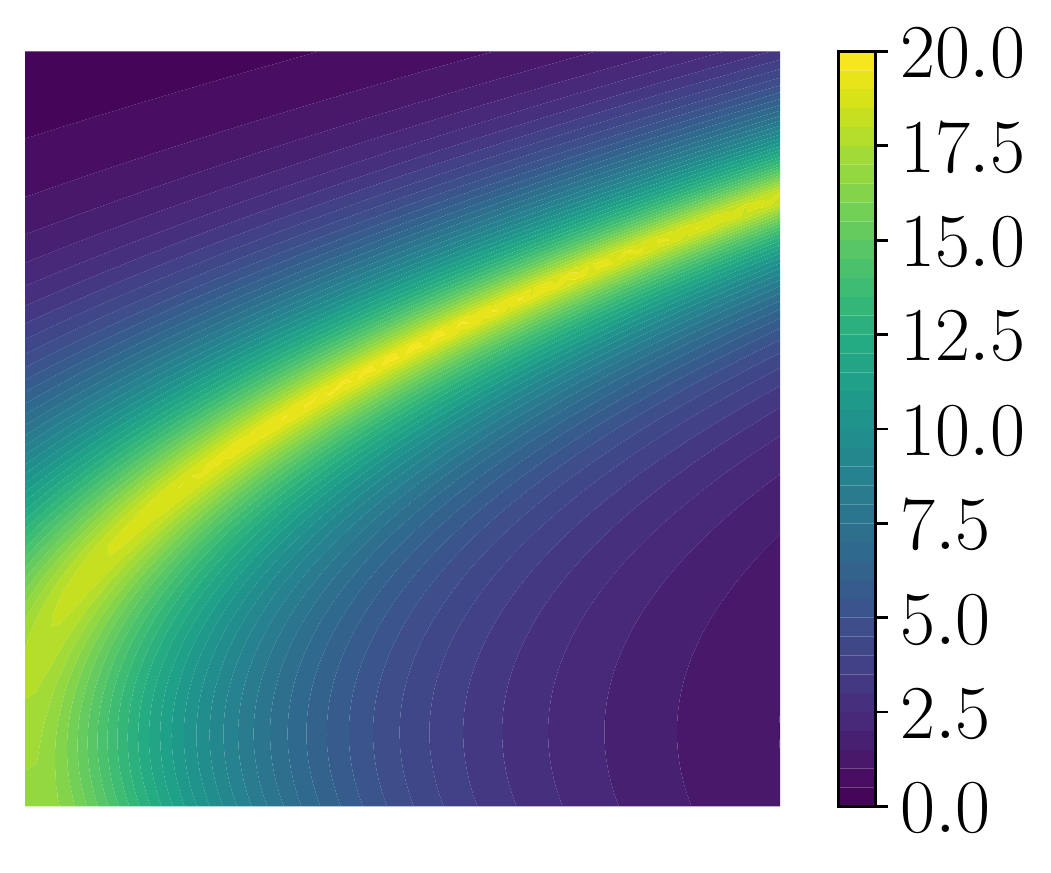} &
        
        \includegraphics[width = 0.24\linewidth]{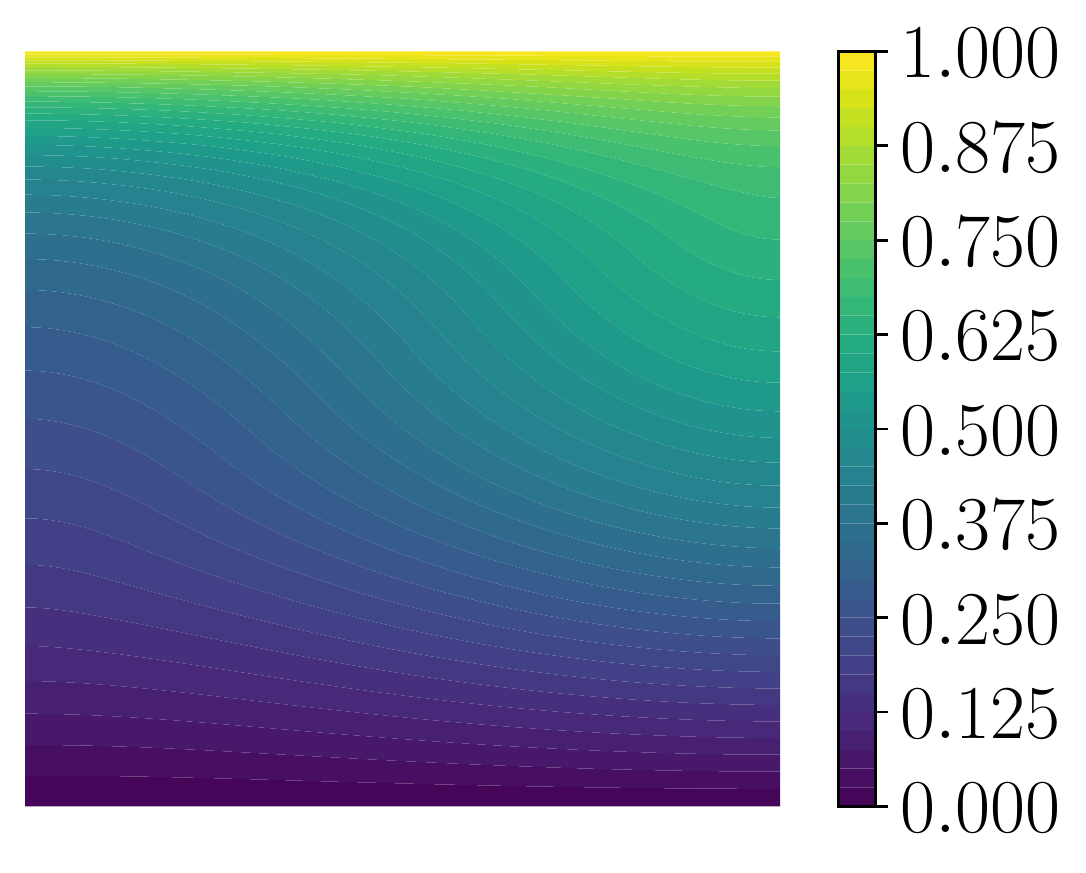} & \includegraphics[width = 0.24\linewidth]{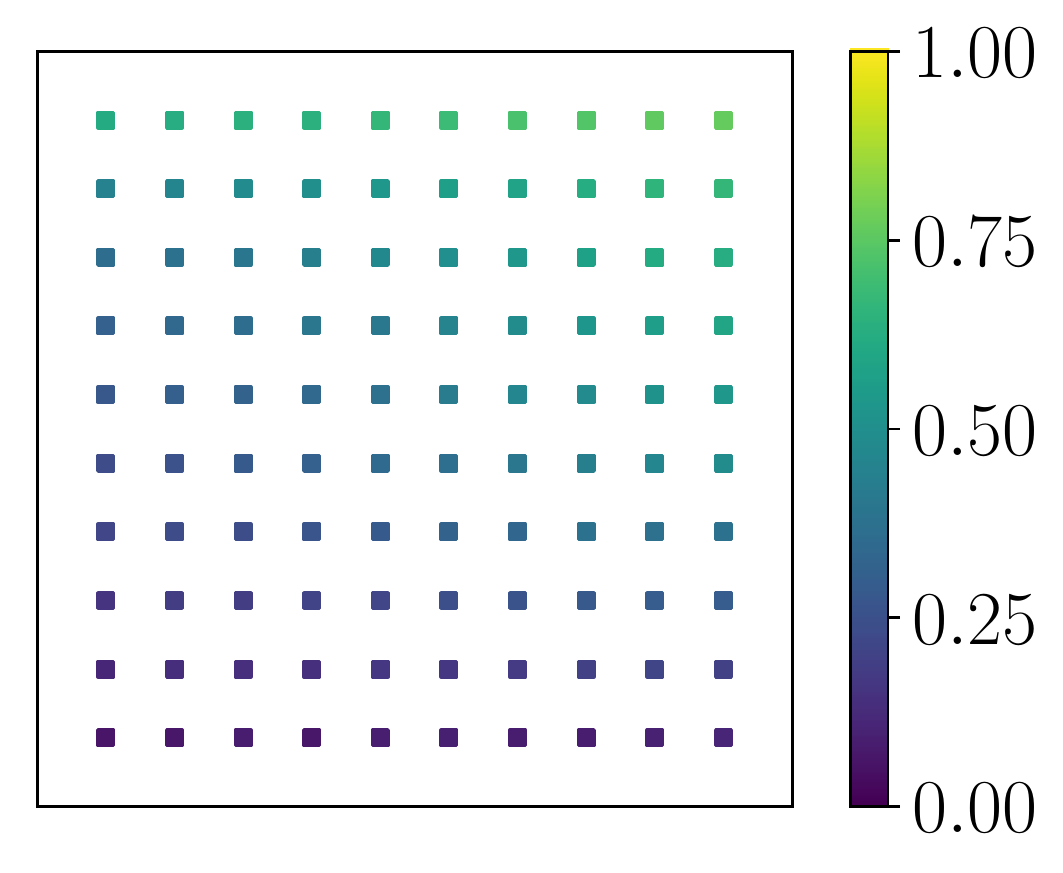}\\    
        \end{tabular}
    \caption{Visualization of the setting for a synthetic Bayesian inverse problem based on the nonlinear Poisson problem introduced in Section~\ref{subsec:poisson_inverse}. From left to right, we have (i) the synthetic parameter field, $m^*$, defined using a Rosenbrock function, (ii) the corresponding synthetic coefficient field, $\kappa^*$, (iii) the finite element solution at $m^*$, $\mathcal{F}^h(m^*)$, (iv) the synthetic observed data, $\by^*$, extracted from locally averaged values of $\mathcal{F}^h(m^*)$ at a $10\times10$ grid of observation points, corrupted by a randomly sampled additive white noise with a standard deviation of $0.0073$.}
        \addtolength{\tabcolsep}{6pt} 
    \label{fig:nonlinearpoisson_setting}
\end{figure}

\subsubsection{Posterior visualization and cost analysis}
To visualize and compare posterior distributions with likelihood functions evaluated with the model via the finite element method, neural operators, and error-corrected neural operators, we generate samples from these posterior distributions via the pCN algorithm introduced in Section 2.3, and visualize their posterior predictive means of the coefficient field by sample average approximation,
\begin{equation}\label{eq:poisson_pred_mean}
    \kappa_{\text{mean}} \approx \frac{1}{n_{\text{post}}}\sum_{j=1}^{n_{\text{post}}} \exp(m_j)\,,
\end{equation}
where $\{m_j\in\mathcal{M}^h\}_{j=1}^{n_{\text{post}}}$ are posterior samples. For each posterior distributions, $8$ MCMC chains are constructed with a mixing parameter of $\beta_{\text{pCN}} = 0.03$ and samples of total $n_{\text{post}} = 120,000$ are collected. While the mixing of the chains is seen to be rapid, a conservative burn-in rate of $25\%$ is used. The average sample acceptance rate for MCMC sampling using the model is around $20\%$. 

In Figure~\ref{fig:poisson_no_mean}, we visualize the model generated posterior predictive mean estimate of the coefficient field alongside the ones generated by the three best-performing neural operators. We observe that the estimates by these neural operators are unable to recover the distinctive curvature of the coefficient field captured by the model estimate, even though they are near the accuracy ceiling of neural operator training where the accuracy increment is quickly diminishing with respect to the size of the training data. In Figure~\ref{fig:poisson_correction_mean}, we provide the same visualization of the estimates generated by the three neural operators with error correction. We observe that they produce estimates that are much more consistently similar to the one generated by the model.

\begin{figure}[H]
    \centering
    \addtolength{\tabcolsep}{-6pt} 
    \begin{tabular}{cccc}
    \textbf{Model $\mathcal{F}^h$} & \multicolumn{3}{c}{\textbf{Neural operator} $\operator_{\boldsymbol{w}}$}\\
     &   $n_{\text{train}} = 1382$ & $n_{\text{train}} = 3225$ & $n_{\text{train}} = 6912$  \\
    \includegraphics[width = 0.23\linewidth]{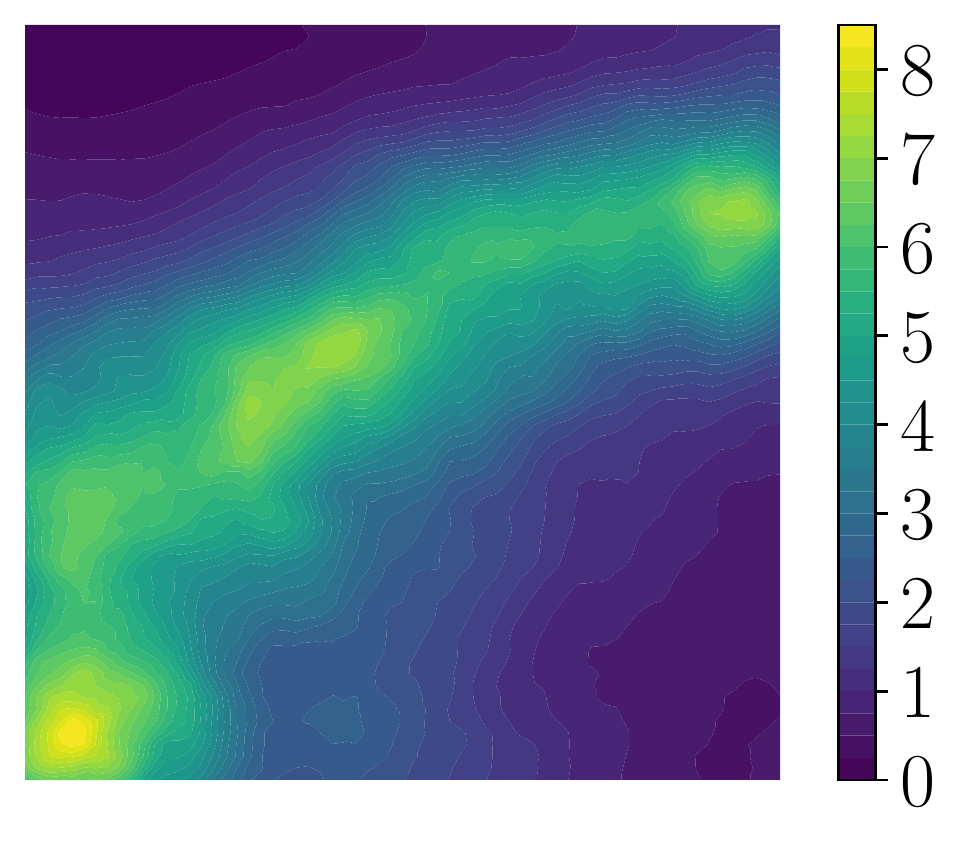} & \includegraphics[width = 0.25\linewidth]{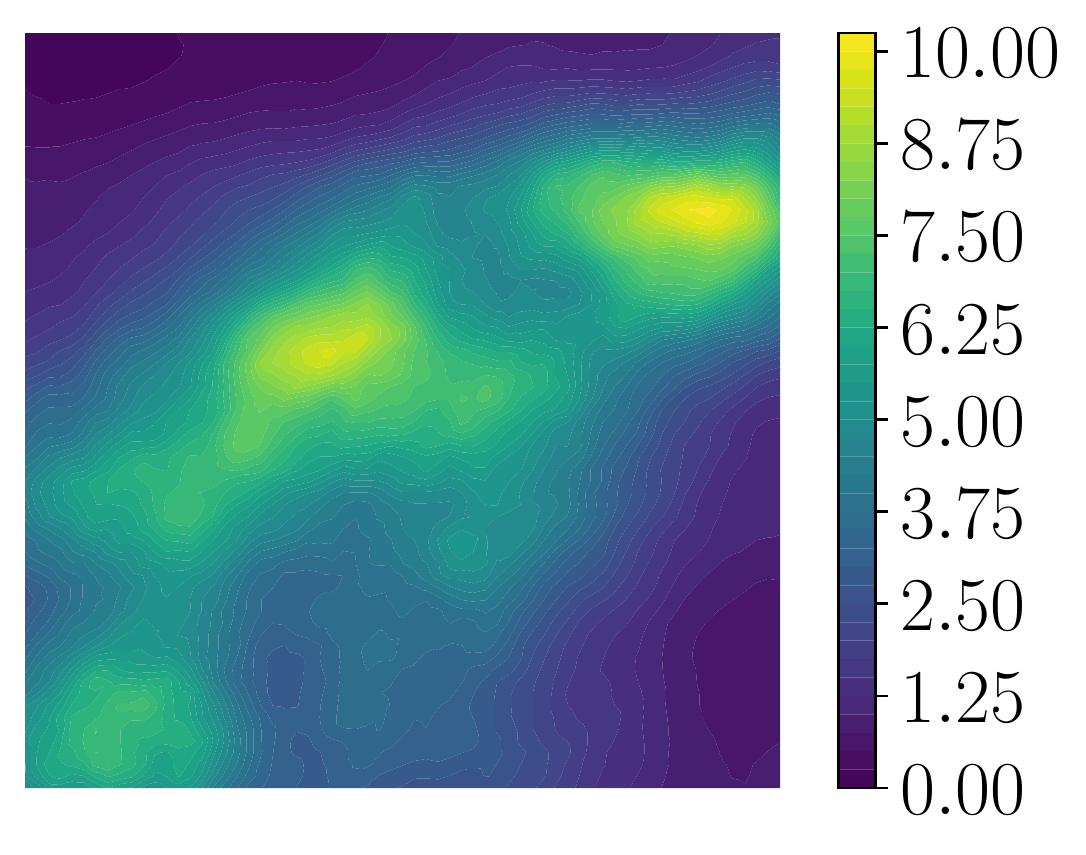}  & \includegraphics[width = 0.26\linewidth]{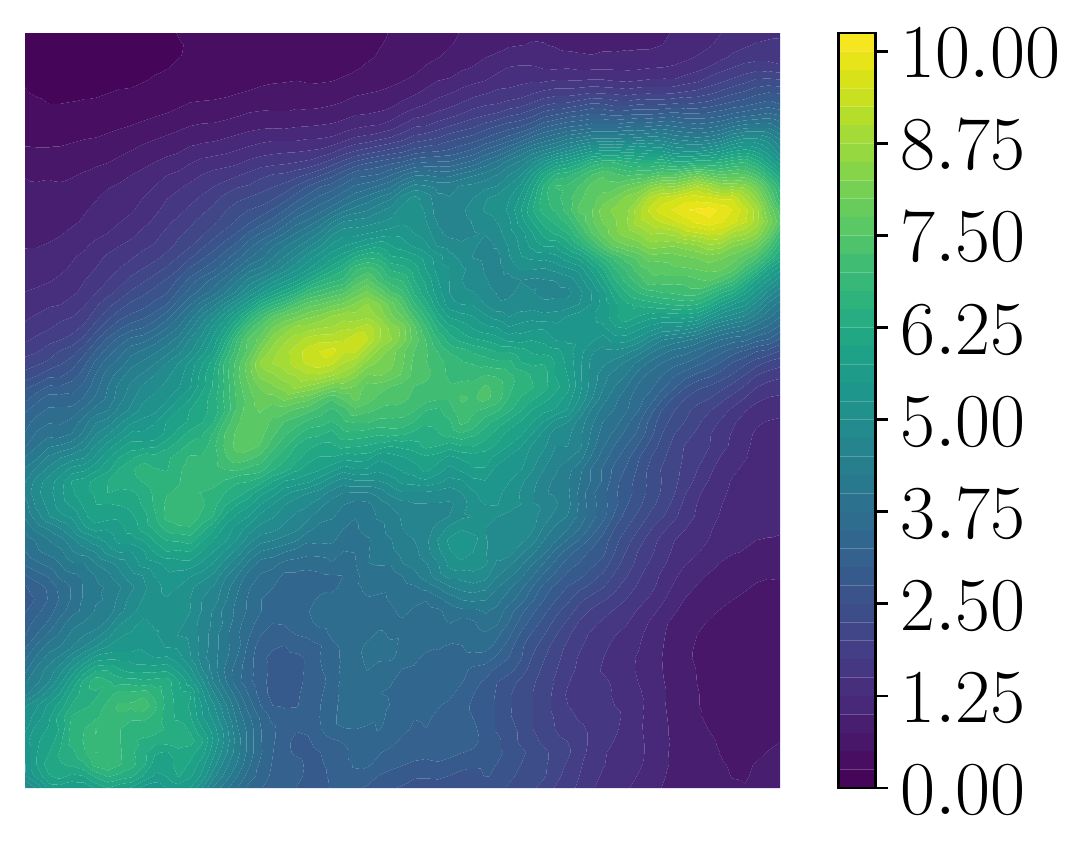} &  \includegraphics[width = 0.23\linewidth]{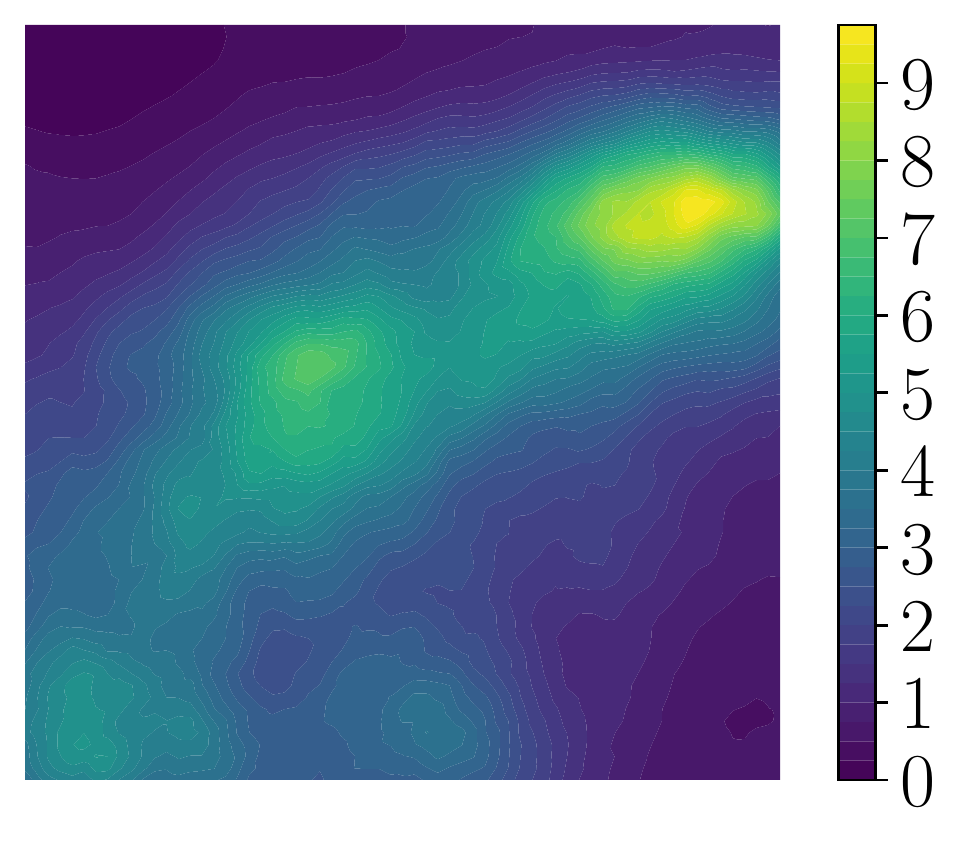}\\
    \end{tabular}
    \addtolength{\tabcolsep}{6pt} 
    \caption{Visualization of posterior predictive mean estimates in~\eqref{eq:poisson_pred_mean} of the coefficient field $\kappa$ for a synthetic Bayesian inverse problem introduced in Section~\ref{subsec:poisson_inverse}. From left to right, we have the estimates by (i) the model via the finite element method, (2) the neural operator trained with $n_{\text{train}} = 1382$, (3) the neural operator trained with $n_{\text{train}} = 3225$, (4) and the neural operator trained with $n_{\text{train}} = 6912$. The accuracy of the neural operators is around $90\%$ as shown in Figure~\ref{fig:nonlinearpoisson_nns}.}
    \label{fig:poisson_no_mean}
\end{figure}

\begin{figure}[H]
    \centering
    \addtolength{\tabcolsep}{-6pt} 
    \begin{tabular}{cccc}
    \textbf{Model $\mathcal{F}^h$} & \multicolumn{3}{c}{\textbf{Neural operator with error correction $\operator_C$}}\\
     &   $n_{\text{train}} = 1382$ & $n_{\text{train}} = 3225$ & $n_{\text{train}} = 6912$  \\
    \includegraphics[width = 0.24\linewidth]{model_mean_coefficient.pdf} & \includegraphics[width = 0.24\linewidth]{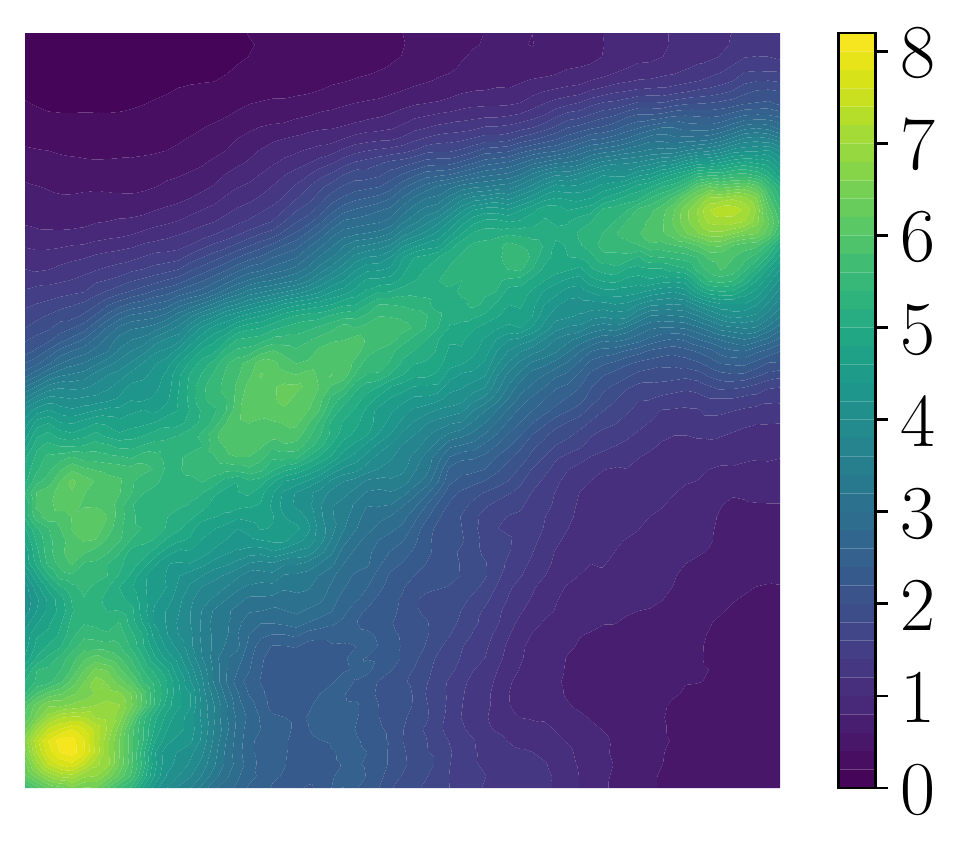} & \includegraphics[width = 0.24\linewidth]{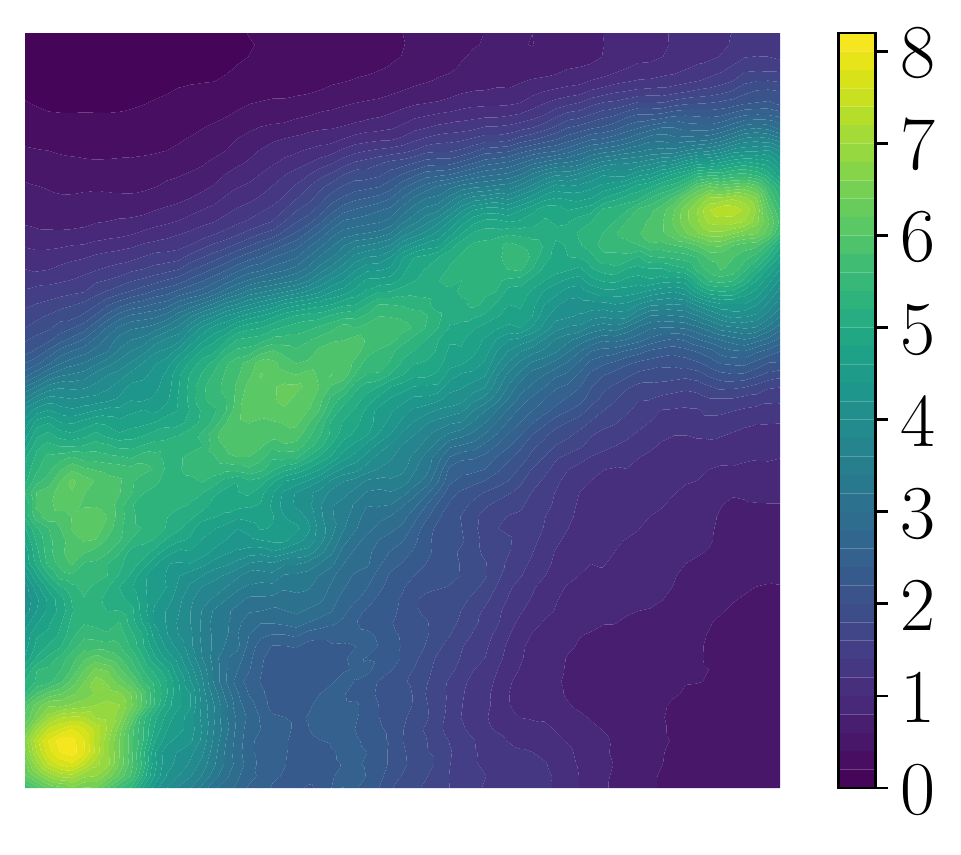} &  \includegraphics[width = 0.24\linewidth]{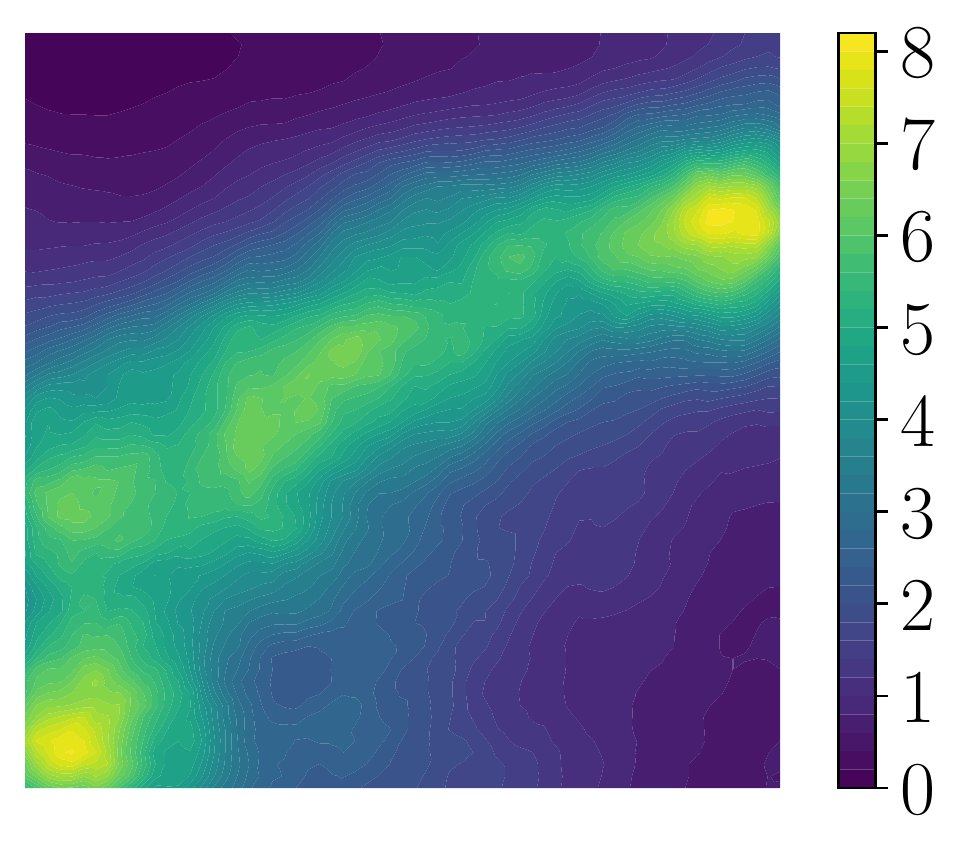}\\
    \end{tabular}
    \addtolength{\tabcolsep}{6pt} 
    \caption{Visualization of posterior predictive mean estimates of the coefficient field in~\eqref{eq:poisson_pred_mean} for a synthetic Bayesian inverse problem introduced in Section~\ref{subsec:poisson_inverse}. From left to right, we have the estimates by (i) the model via the finite element method, (2) the neural operator trained with $n_{\text{train}} = 1382$ with error correction, (3) the neural operator trained with $n_{\text{train}} = 3225$ with error correction, (4) and the neural operator trained with $n_{\text{train}} = 6912$ with error correction. The accuracy of the neural operators with error correction is close to $100\%$ as shown in Figure~\ref{fig:nonlinearpoisson_nns}.}
    \label{fig:poisson_correction_mean}
\end{figure}

In Figure~\ref{fig:nonlinearpoisson_speedup}, we visualize the observed and asymptotic speedups for the posterior sampling using the $7$ trained neural operators with or without the error correction. The asymptotic speedup, as defined in Section 4.4, assumes $n_{\text{chain}}\to\infty$ in the sense that a long Markov chain is generated or repetitive use of the trained neural operators in different problems. As a result, the offline costs of the neural operator construction and training are neglected. The asymptotic speedup of the error-corrected neural operators is about $2.5$, which is the number of Newton iterations for solving the nonlinear problem, averaged over the posterior distribution. The asymptotic speedup of the neural operators is over two orders of magnitude. The observed speedups additionally account for the offline cost of reduced basis approximation, training data generation, and optimization. The finite $n_{\text{chain}}$ used for the posterior sampling presented above is used for the computation of the observed speedups. As a result, the observed speedups for the neural operator decay substantially as the number of training data increases. We observe almost an order of magnitude drop of speedup for $n_{\text{train}} = 6912$ compared to $n_{\text{train}} = 100$ A similar decay is observed for the error-corrected neural operators, but the dominant cost remains the cost of solving the linear systems associated with the error correction steps.

\begin{figure}[H]
\centering
\includegraphics[width = 0.9\textwidth]{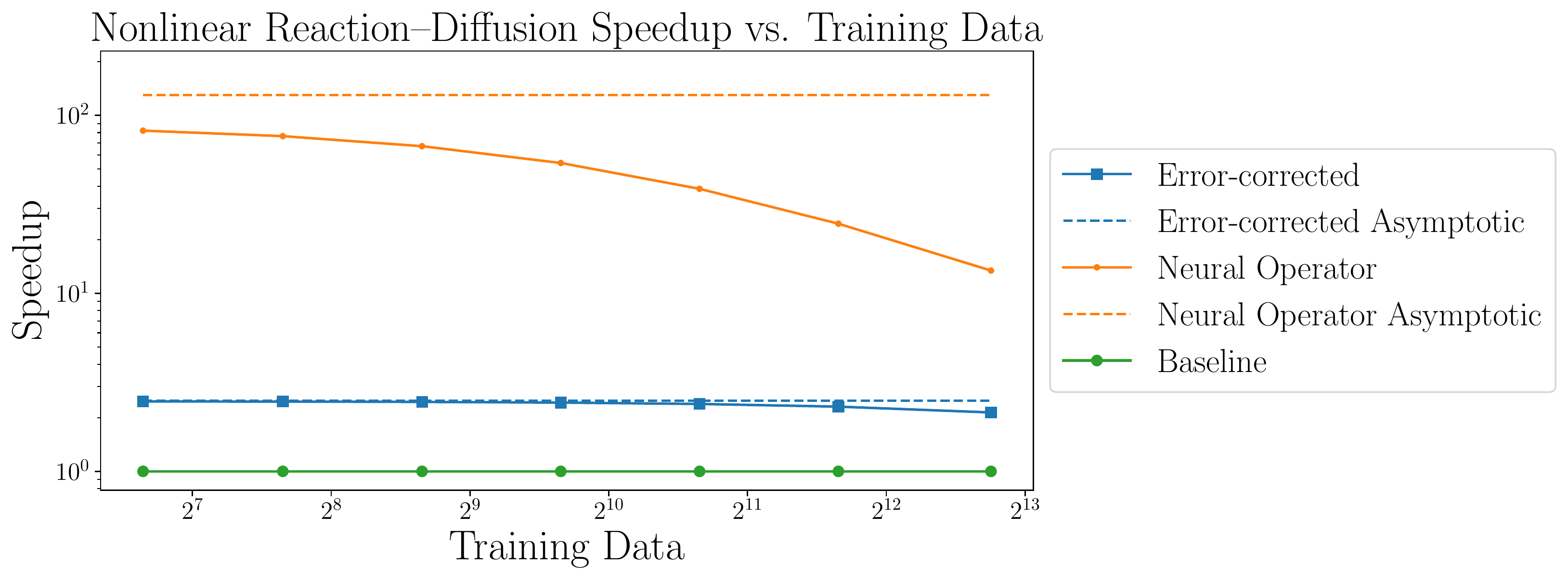}
\caption{The observed and asymptotic speedups, as defined in Section 4.4, for the posterior sampling via pCN for the neural operators and the error-corrected neural operators for the nonlinear reaction--diffusion problem. The asymptotic speedups assume $n_{\text{chain}}\to\infty$, thus neglecting the offline cost of neural operator construction and training. The observed speedups additionally account for the offline cost and the total number of iterative solves within generated Markov chains used for the posterior visualization in Figure~\ref{fig:poisson_no_mean} and~\ref{fig:poisson_correction_mean}.}
\label{fig:nonlinearpoisson_speedup}
\end{figure}

\subsection{Hyperelastic material properties discovery}\label{ss:hyperelastic}
Here we introduce the physical and mathematical setting for hyperelastic material properties discovery. This is a problem that has attracted many research interests for its important role in various engineering and medical applications~\cite{gokhale2008solution, goenezen2011solution, affagard2015identification, mei2018comparative}. We consider an experimental scenario where a square thin film of a hyperelastic material is fixed on one edge, with a traction force applied on the opposite edge, leading to a deformation of the material. We attempt to infer the material properties as spatially-varying functions from noisy measurements of material displacement at discrete positions via Bayes' rule. The schematic of the problem setup is shown in~\ref{fig:hyperSetup}.

\begin{figure}[H]
    \centering
    \includegraphics[width=0.35\linewidth]{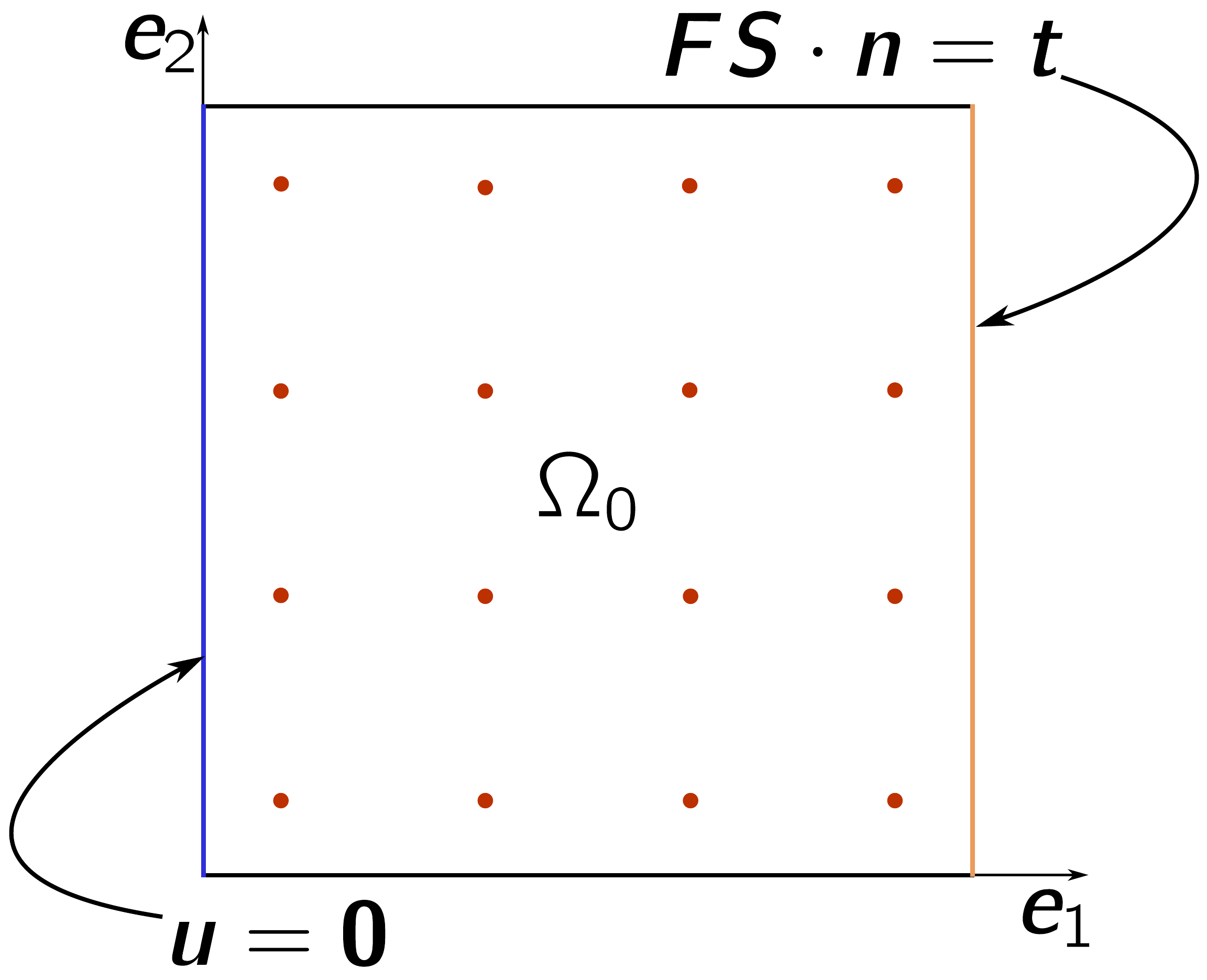}
    \caption{Schematics of a numerical experiment involving deformation of a hyperelastic material. Here the red dots are the points on the reference domain at which displacement (both components) are observed. On the left edge, the material is clamped so that the displacement is zero at this edge. On the right edge, traction in the form of a function $\bt = \bt(\bX)$ is prescribed. The top and bottom edges are traction free.}
    \label{fig:hyperSetup}
\end{figure}

\subsubsection{A model for hyperelastic material deformation}\label{subsubsec:hyperelastic_model}
Let $\Omega_0 = (0,1)^2$ be a normalized unit square material domain for the material of interest under the thin film approximation, and $\bX \in \Omega_0$ denote the material point. The current configuration of the material after deformation is represented by a map $\bchi: \Omega \to \R^2$. The material point $\bX$ is mapped to a spatial point $\boldsymbol{x} = \bchi(\bX) = \bX + \bu(\bX)$, where $\bu  = \bu(\bX)$ is the displacement of material points. As material deforms relative to a given reference configuration, internal forces are developed within the body. These internal forces depend on the underlying stored internal energy; for a hyperelastic material, there is a strain energy density, $W = W(\bX, \bC)$, as a function of material coordinate $\bX$ and the right Cauchy--Green strain tensor $\bC = \bF^T \bF$, $\bF = \nabla \bchi = \bI + \nabla \bu$ being the deformation gradient. The stress in the reference configuration, $\bS = \bS(\bX, \bC)$ is the \textit{second Piola--Kirchhoff stress tensor} given by
\begin{equation}
\bS(\bX, \bC) = 2\frac{\partial W(\bX, \bC)}{\partial \bC}\,.
\end{equation}
We consider the model for neo-Hookean materials \cite{jog2009energy, jog2015continuum} for which strain energy density function takes the form
\begin{equation}
	W(\bX, \bC) = \frac{\mu(\bX)}{2} (\mathrm{tr}(\bC) - 3) + \frac{\lambda(\bX)}{2} \left(\ln(J)\right)^2 - \mu(\bX) \ln(J)\,,
\end{equation}
where $\mathrm{tr}(\bC)$ is the trace of the second order tensor $\bC$, and $J$ is the determinant of the deformation gradient $\bF$. Here $\lambda$ and $\mu$ are the so-called \textit{Lam\'e parameters} which we assume to be related to Young's modulus of elasticity, $E$, and Poisson ratio, $\nu$, as follows:
\begin{equation}
\lambda = \frac{E \nu}{(1+\nu)(1-2\nu)}\,, \qquad \mu = \frac{E}{2(1+\nu)}\,.
\end{equation}
In what follows, we assume prior knowledge of (i) Poisson ratio $\nu = 0.4$, and (ii) an epistemically uncertain and spatially-varying Young's modulus, $E:\Omega_0\to\R_+$, that follows a log-normal prior distribution:
\begin{equation}
    E = \exp(M) + E_L\,,\quad M\sim \nu_M\coloneqq\mathcal{N}(m_{\text{pr}}, \mathcal{C}_{\text{pr}})\,,
\end{equation}
where $E_L>0$ is a lower bound on the pointwise value of the random function, normalized to have a value of $1$, and $m_{\text{pr}} = 0.37$ is a constant over $\Omega_0$. The goal of the Bayesian inverse problem is to learn the material properties $E$ through the parameter random field $M$ from observed displacements via Bayes' rule.

The prior distribution of $M$ is specified with a mean of a constant value $0.36$ and covariance constructed according to~\eqref{eq:gaussian_prior} with hyperparameters of $d = 2$, $\alpha = 4/3$, $\beta =0.12$, and $\boldsymbol{\Theta} = \boldsymbol{I}$. They correspond to Gaussian random fields with a pointwise variance of approximately $1$, a correlation length of approximately $0.3$, and small boundary artifacts. In Figure~\ref{fig:hyper_elasticity_prior}, we visualize several samples of the prior and their corresponding Young's modulus fields.

\begin{figure}[H]
    \centering
    \addtolength{\tabcolsep}{-6pt} 
    \begin{tabular}{ccccc}
        $M\sim \nu_M$ & $E$ & $\mathcal{F}^h(M)$ & $|\mathcal{F}^h(M) - \operator_{\boldsymbol{w}}(M)|$ & $|\mathcal{F}^h(M) - \operator^h_{C}(M)|$\\
        \includegraphics[width = 0.19\linewidth]{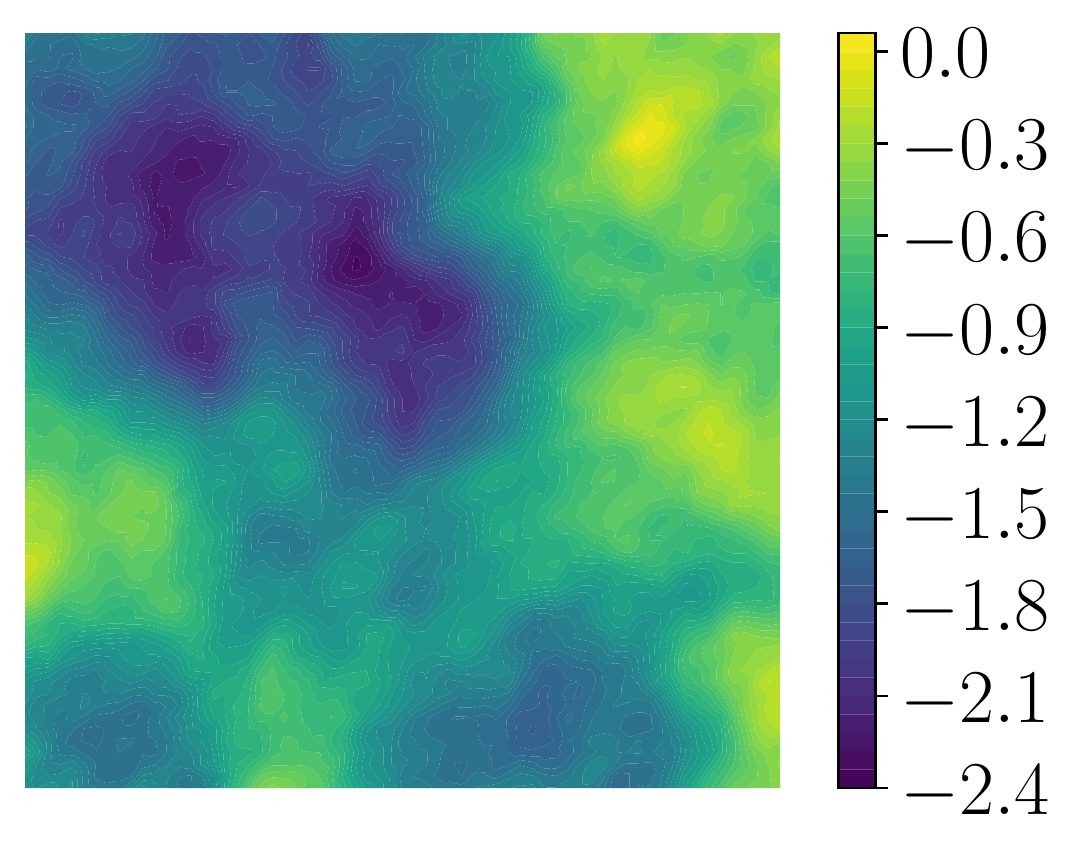}  & \includegraphics[width = 0.19\linewidth]{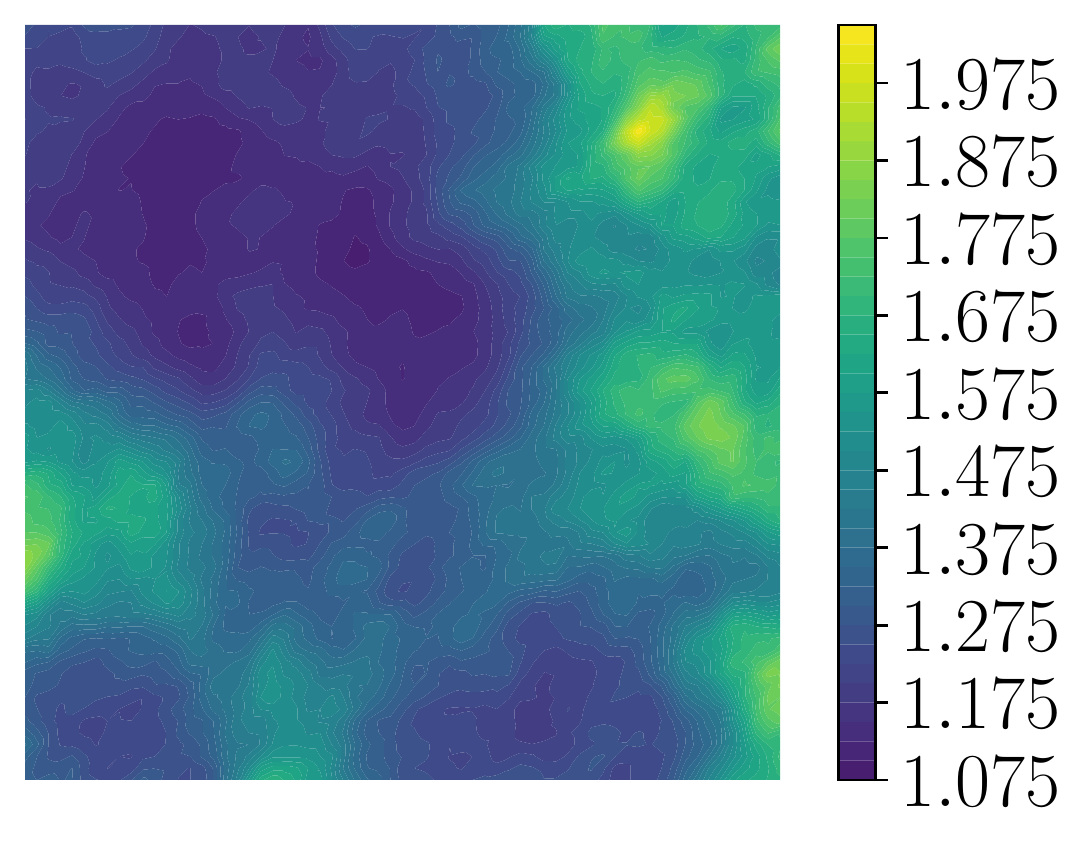} & \includegraphics[width = 0.19\linewidth]{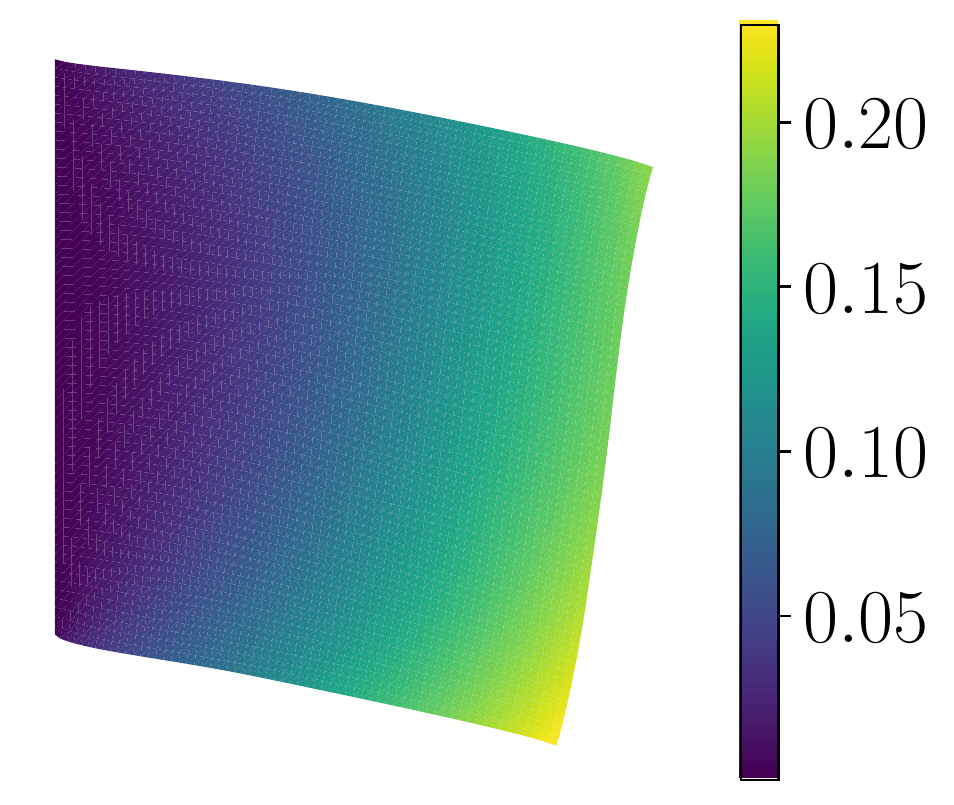} & \includegraphics[width = 0.19\linewidth]{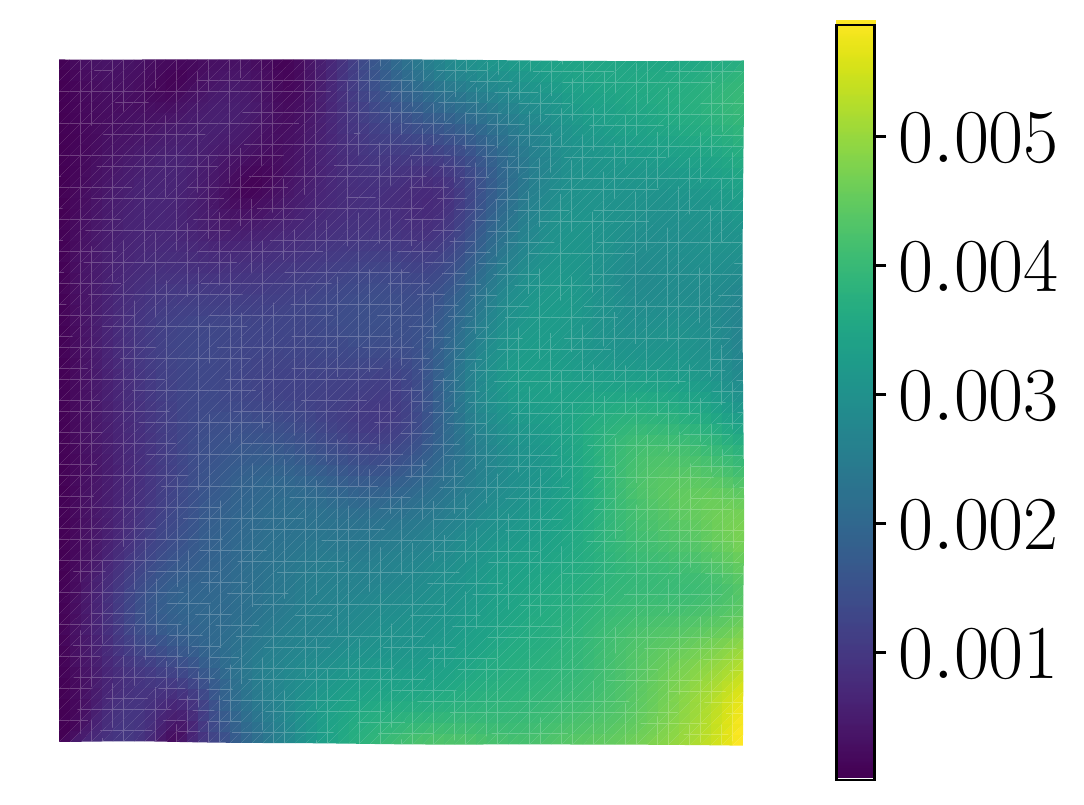} & \includegraphics[width = 0.19\linewidth]{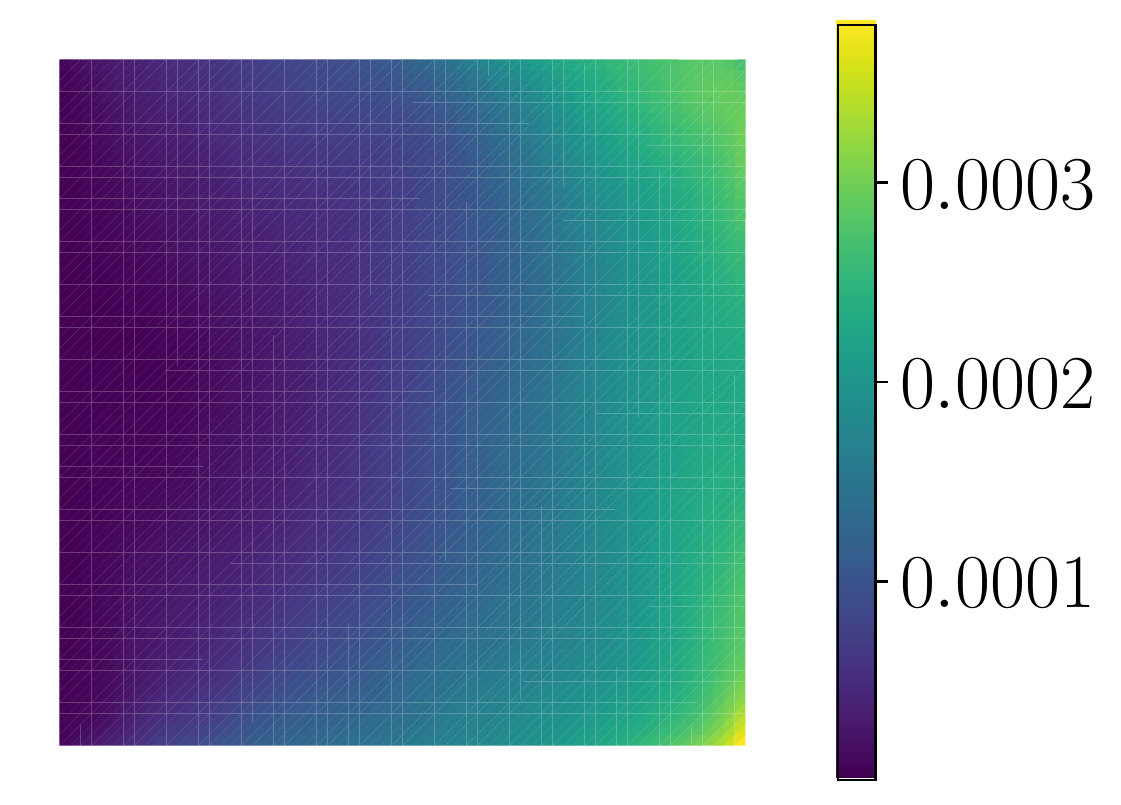}\\
        \includegraphics[width = 0.19\linewidth]{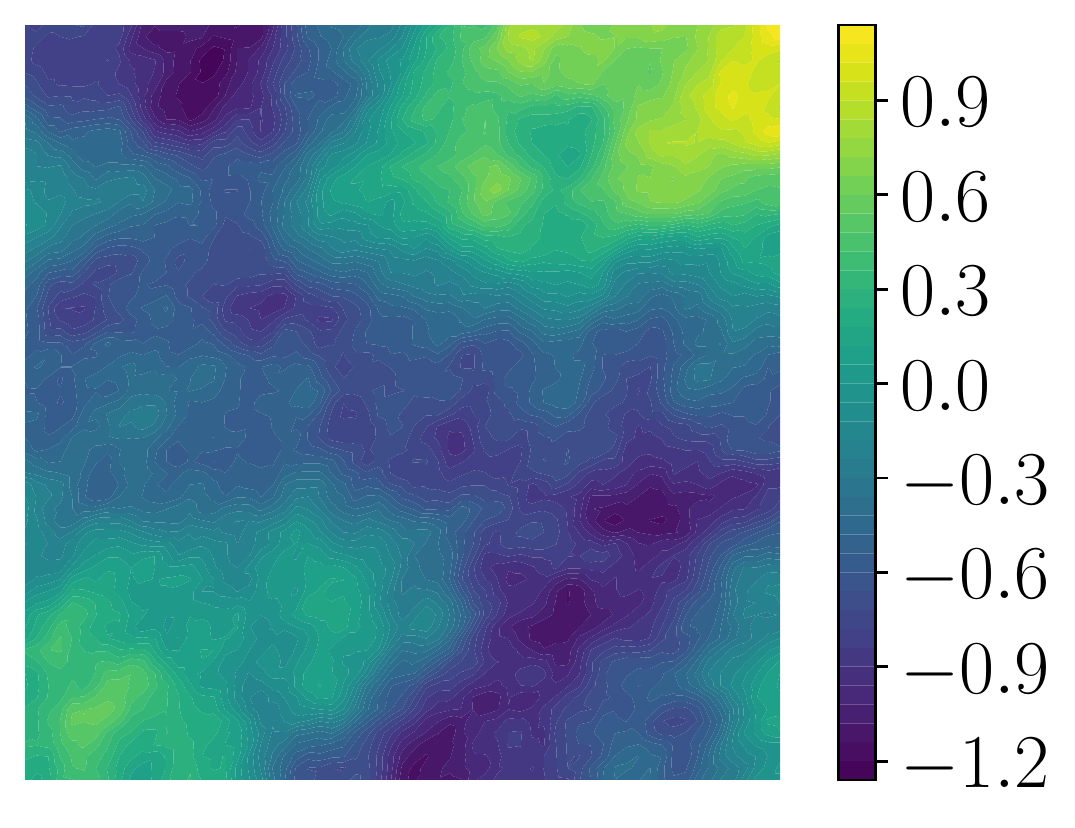} & \includegraphics[width = 0.19\linewidth]{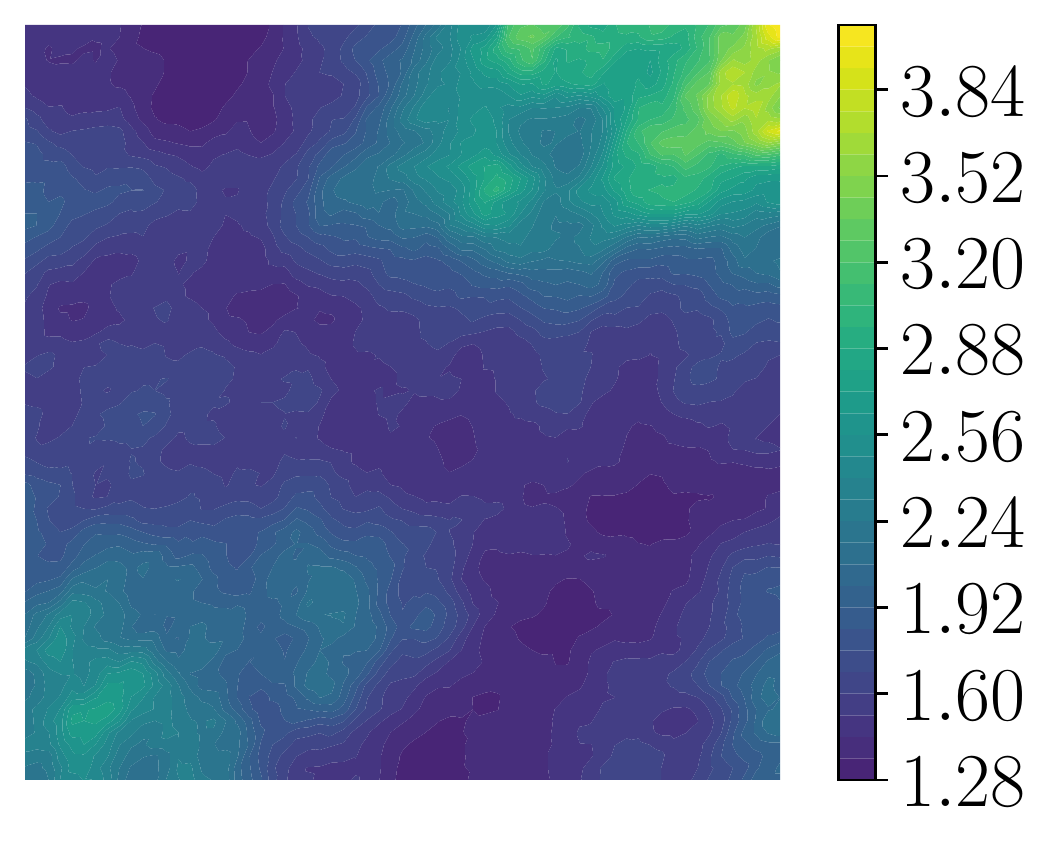} &\includegraphics[width = 0.19\linewidth]{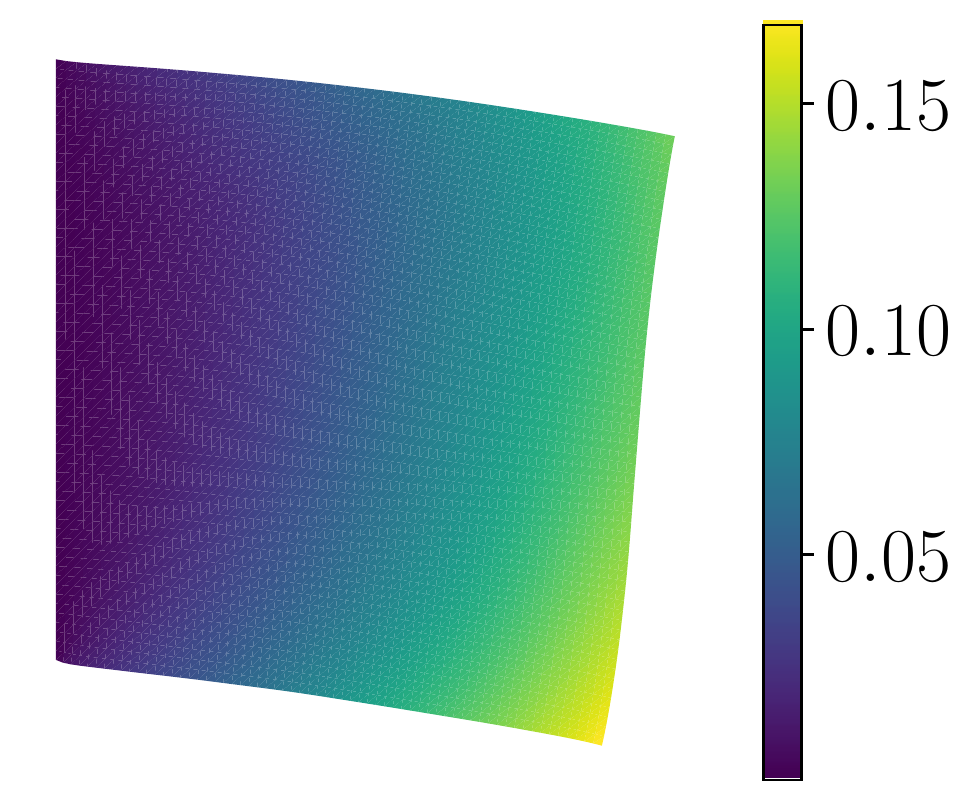} & \includegraphics[width = 0.19\linewidth]{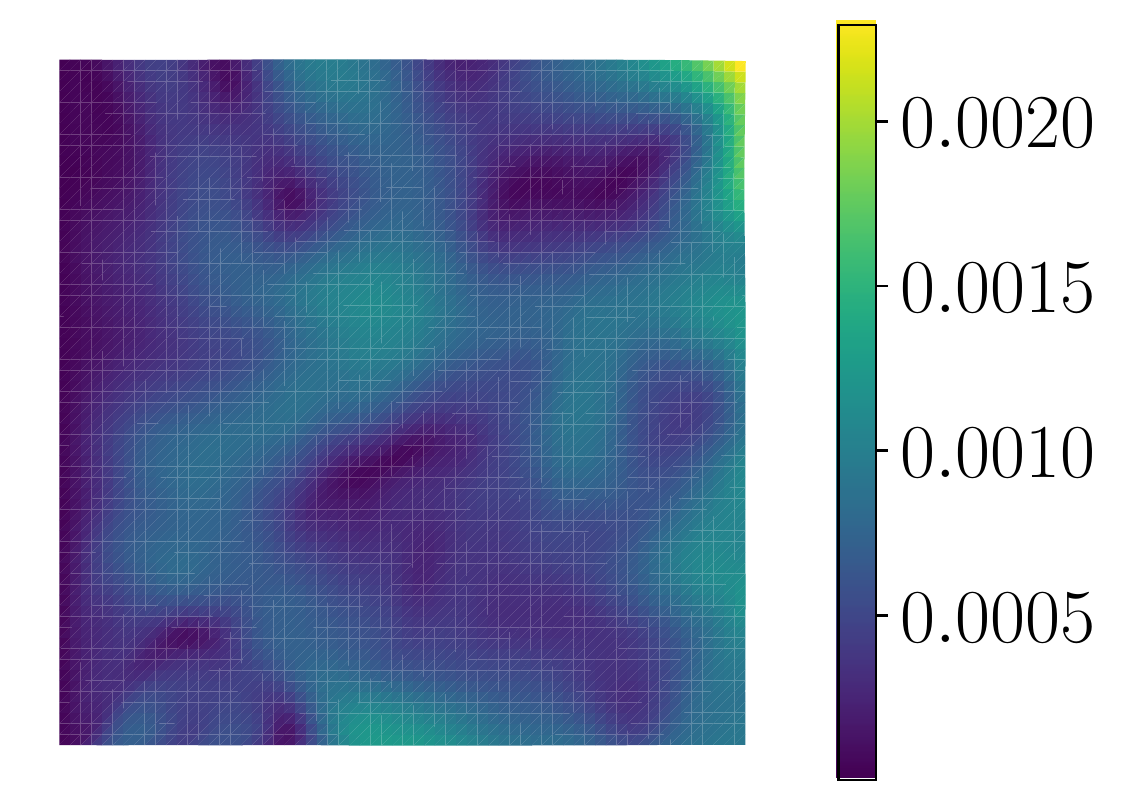} & \includegraphics[width = 0.19\linewidth]{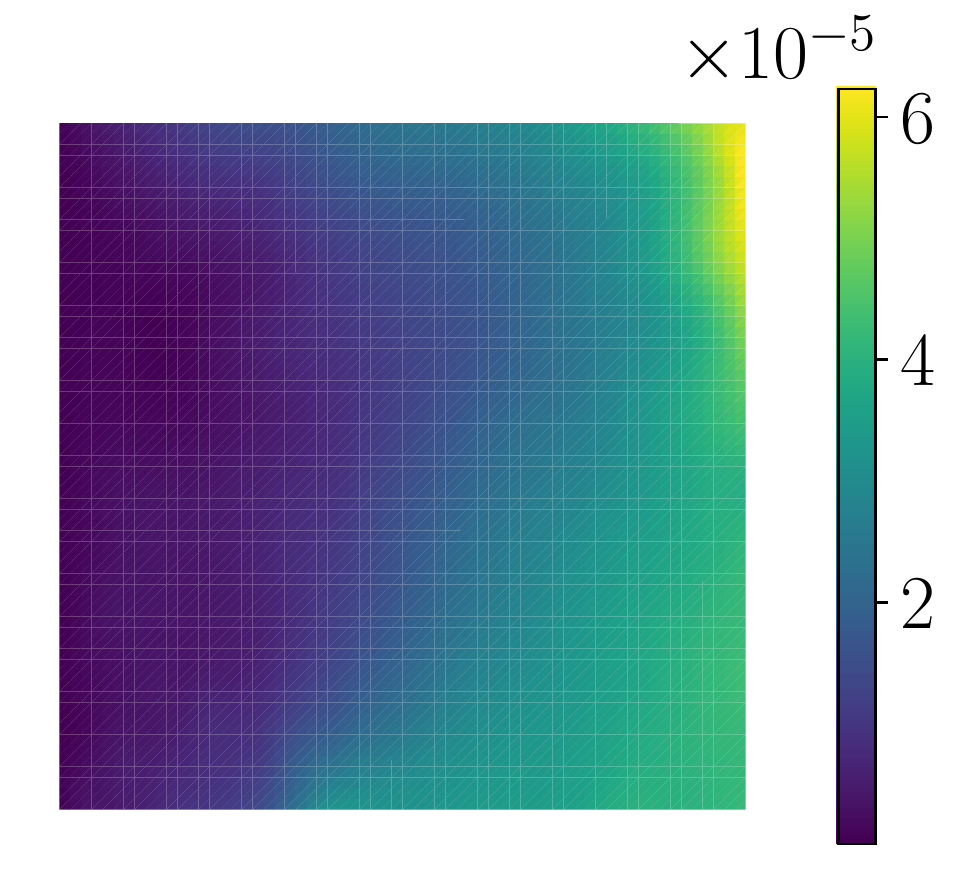}\\
        \includegraphics[width = 0.19\linewidth]{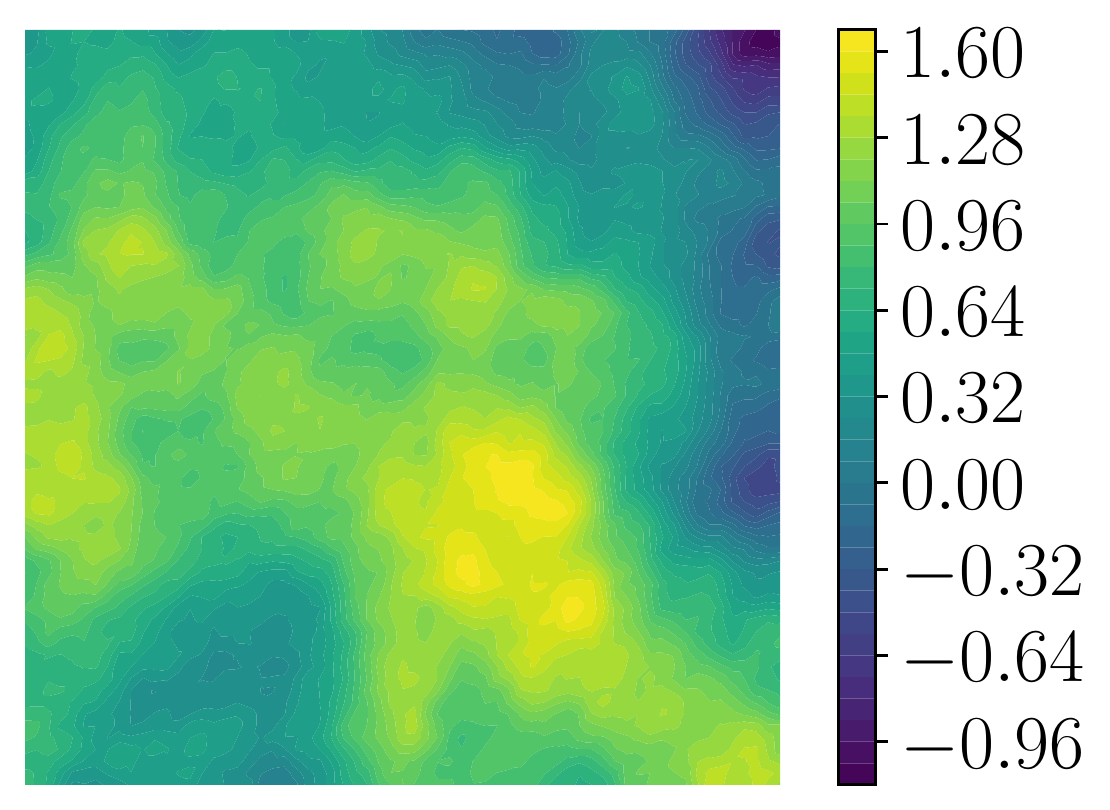} & \includegraphics[width = 0.19\linewidth]{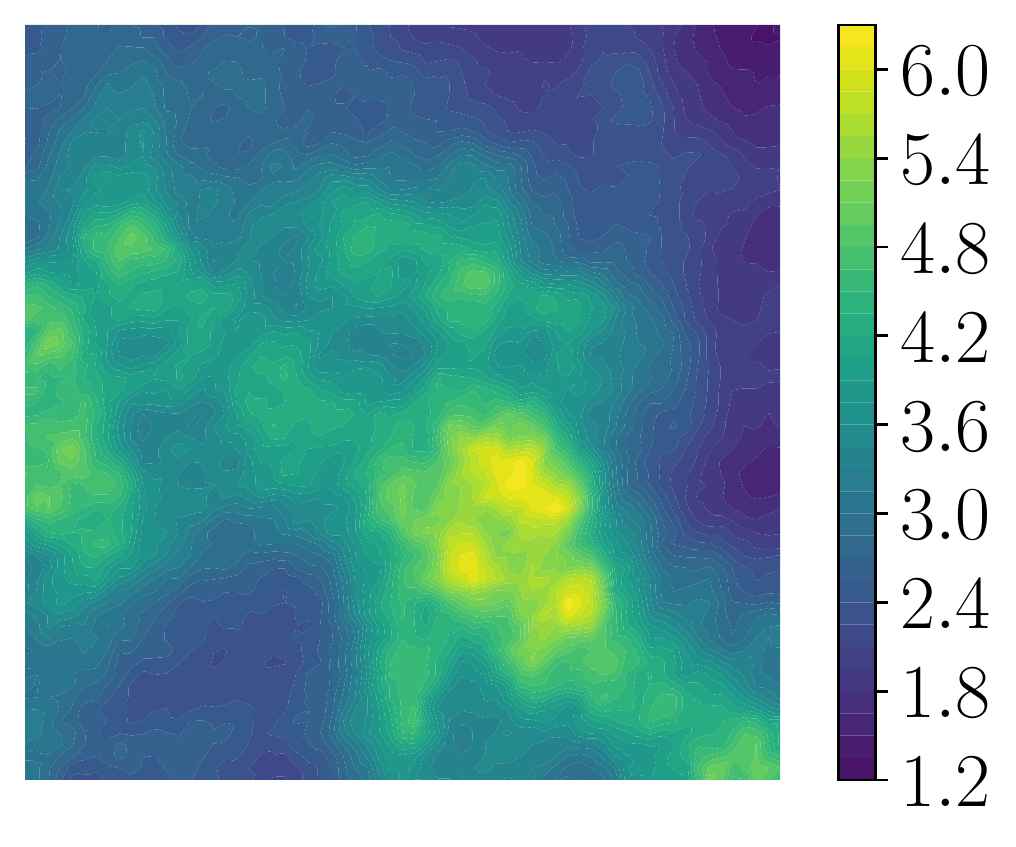} &\includegraphics[width = 0.19\linewidth]{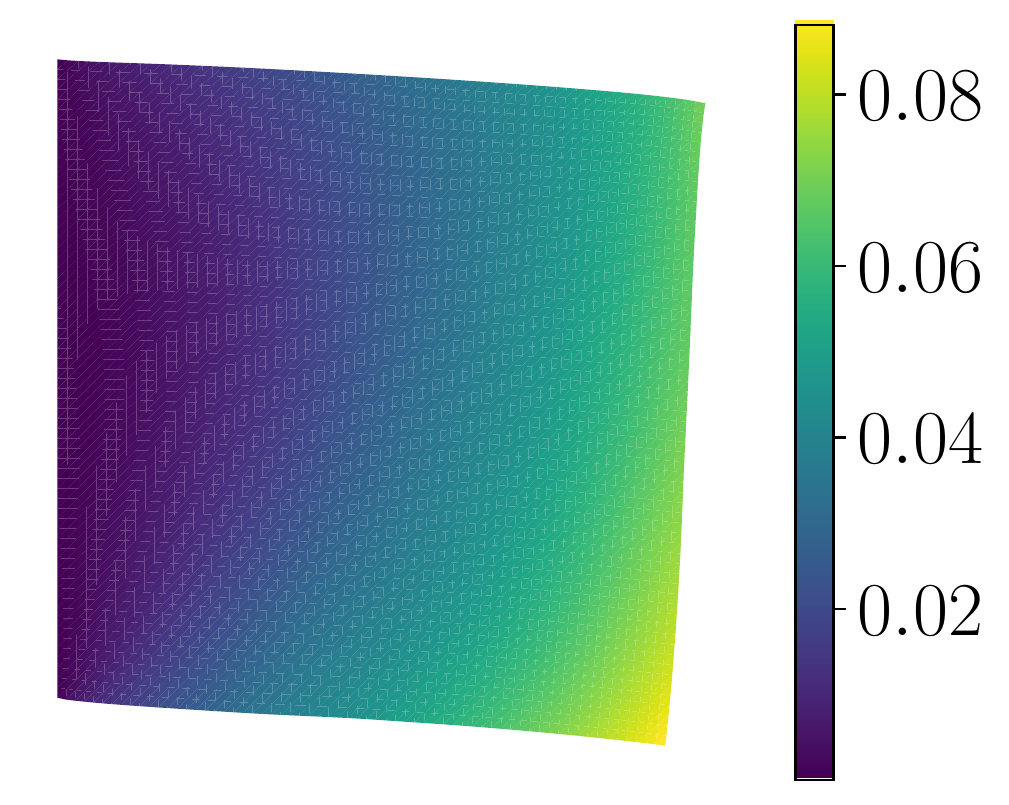} & \includegraphics[width = 0.19\linewidth]{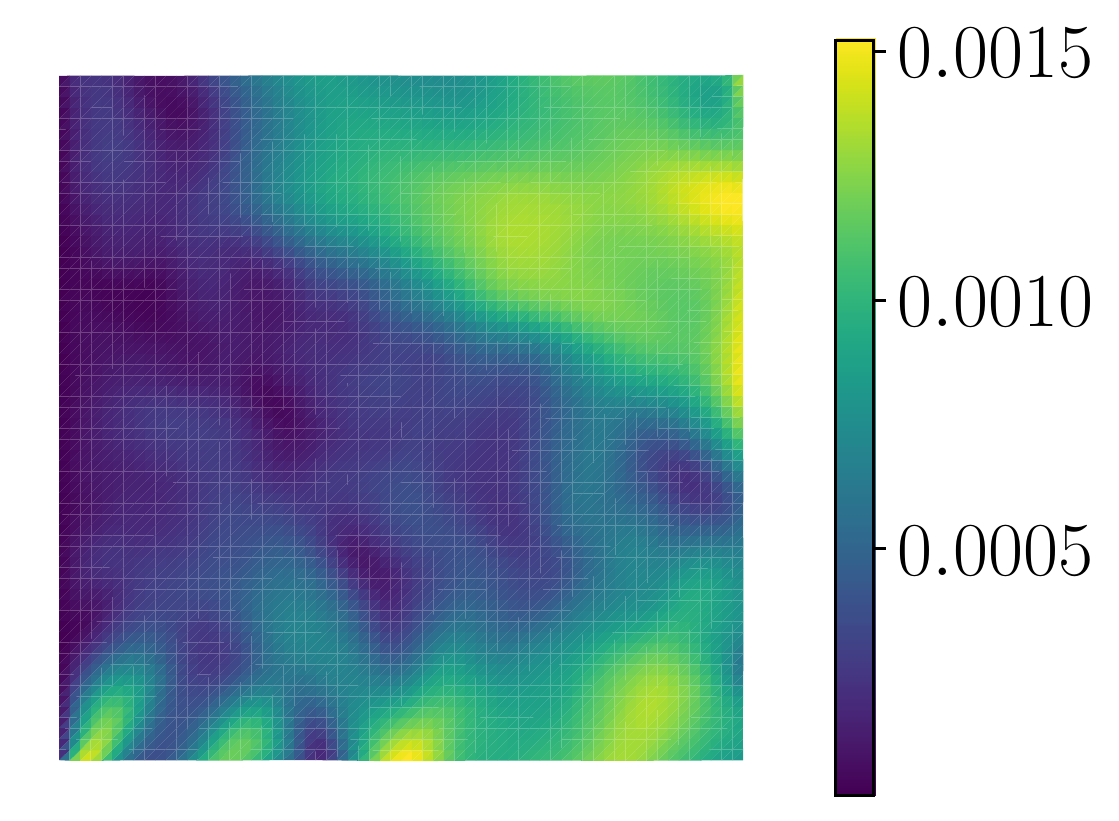} & \includegraphics[width = 0.19\linewidth]{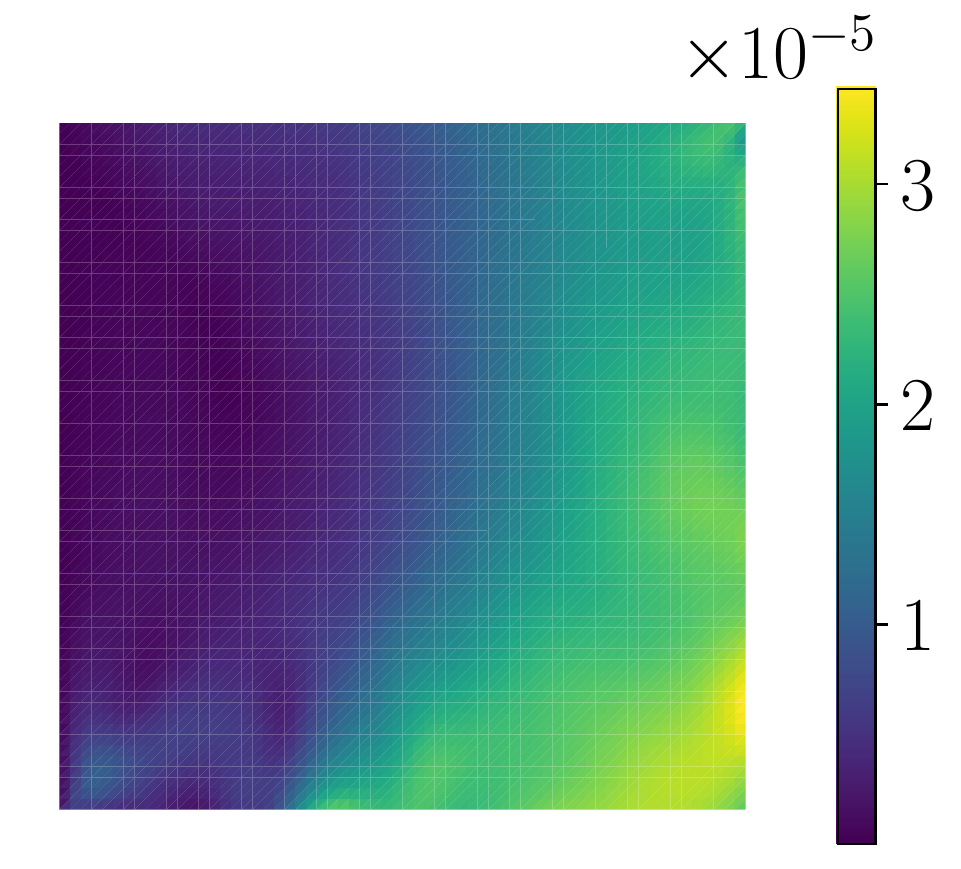}\\
    \end{tabular}
    \addtolength{\tabcolsep}{6pt} 
    \caption{Visualization of the prior distribution, model solutions, and neural operator performance with and without error correction for the hyperelastic material deformation problem introduced in Section~\ref{subsubsec:hyperelastic_model}. From left to right, we have (i) three Gaussian random fields, $m_j$, $j=1,2,3$, sampled from the prior distribution $\nu_M$ with approximately a pointwise variance of $1$ and a correlation length of $30\%$ domain side length, each occupies a row; (ii) the corresponding Young's modulus samples, $E_j$, defined by $E_j = \exp(m_j) + 1$, (viii) the corresponding finite element solutions of the current configurations, $\mathcal{F}^h(m_j)$, (iv) the absolute prediction errors at $m_j$ for the best performing neural operator $\operator_{\boldsymbol{w}}$ ($\sim95\%$ accurate as shown in in Figure~\ref{fig:hyperelasticity_nns}), $|\mathcal{F}^h(m_j) - \operator_{\boldsymbol{w}}(m_j)|$, and (v) the absolute errors at $m_j$ for the error-corrected predictions based on the best performing neural operator.}
    \label{fig:hyper_elasticity_prior}
\end{figure}

Assuming a quasi-static model in which external forces are applied very slowly so that dependence of displacement on time can be ignored and negligible body forces, the balance of linear momentum leads to the following nonlinear PDE for a given parameter $m:\Omega_0\to\R$.
\begin{subequations}
\begin{align}\label{eq:linmombal}
	\nabla \cdot (\bF(\bX)\bS(\bX, m(\bX), \bC(\bX))) &= \bzero\,, && \bX\in \Omega_0\,;\\
	\bu &= \bzero\,, \qquad && \bX\in \Gamma_{l}\,;\\
	\bF(\bX) \bS(\bX, m(\bX), \bC(\bX)) \cdot \bn  &= \bzero\,, && \bX\in \Gamma_{t}\cup\Gamma_{b};\\
	\bF(\bX) \bS(\bX, m(\bX), \bC(\bX)) \cdot \bn &= \bt(\bX)\,, && \bX\in \Gamma_{r}\,.
\end{align}
\end{subequations}
Here, $\Gamma_t$, $\Gamma_r$, $\Gamma_b$, and $\Gamma_l$ denote the top, right, bottom, and left boundary of the material domain, and $\bt$ is the traction given by 
\begin{equation}
\bt(\bX) = a \exp\left(-\frac{|X_2 - 0.5|^2}{b}\right) \be_1 + c \left(1 + \frac{X_2}{d}\right) \be_2
\end{equation}
with $a = 0.06$, $b = 4$,  $c = 0.03$, and $d = 10$. The applied traction on the right side is a combination of shear and tensile forces.

We consider the following parameter space, state space, and solution set for the nonlinear PDE problem above:
\begin{equation}
    \mathcal{M} \coloneqq L^2(\Omega_0)\,,\quad
    \mathcal{U} \coloneqq H^1(\Omega_0;\R^2)\,,\quad
    \mathcal{U}_0 = \mathcal{V}_u \coloneqq  \{\bu\in H^1(\Omega_0;\R^2):\bu|_{\Gamma_l} = \bzero\}\,,
\end{equation}
where the restriction to the boundary is defined with the trace operator. The variational problem for the experimental scenario is:
\begin{equation}\label{eq:hyperWeakForm}
\begin{aligned}
    &\text{Given } m\in \mathcal{M}, \text{ find }\bu \in \mathcal{V}_u \text{ such that }\\
	&\qquad\dualDot{\mathcal{R}(\bu, m)}{v}\coloneqq\int_{\Omega_0} \bF\bS(\bX, m(\bX) \bC(\bX)\bcolon \nabla \bv \,\dd \bX - \int_{\Gamma_{t}} \bt \cdot \bv\, \dd \bX = 0\,,\quad\forall \bv\in\mathcal{U}_0\,.
\end{aligned}
\end{equation}
In Figure~\ref{fig:hyper_elasticity_prior}, we show the model predictions via the finite element method, to be specified in the following subsection, of the current configurations at random samples from the prior.

\subsubsection{Numerical approximation and neural operator performance} 

The numerical evaluation of the forward operator, $\mathcal{F}^h$, sampling of the prior distribution, $\nu_M^h$, and the residual-based error correction problem is implemented via the finite element method. In particular, the domain $\Omega_0$ is discretized with $64\times64$ cells of uniform linear Lagrangian triangular elements, which forms finite element spaces $\mathcal{M}^h\subset\mathcal{M}$ and $\mathcal{U}^h\subset\mathcal{U}$ with $8450$ and $4225$ degrees of freedom, respectively. The variational problems of the model, prior sampling, and error correction are then approximated and solved in these finite element spaces. We employ the Newton iteration for solving the hyperelastic material deformation problem, which takes on average 8 iterations to converge for parameters samples generated from the prior distribution

\begin{figure}[H]
\center
\includegraphics[width = 0.6\textwidth]{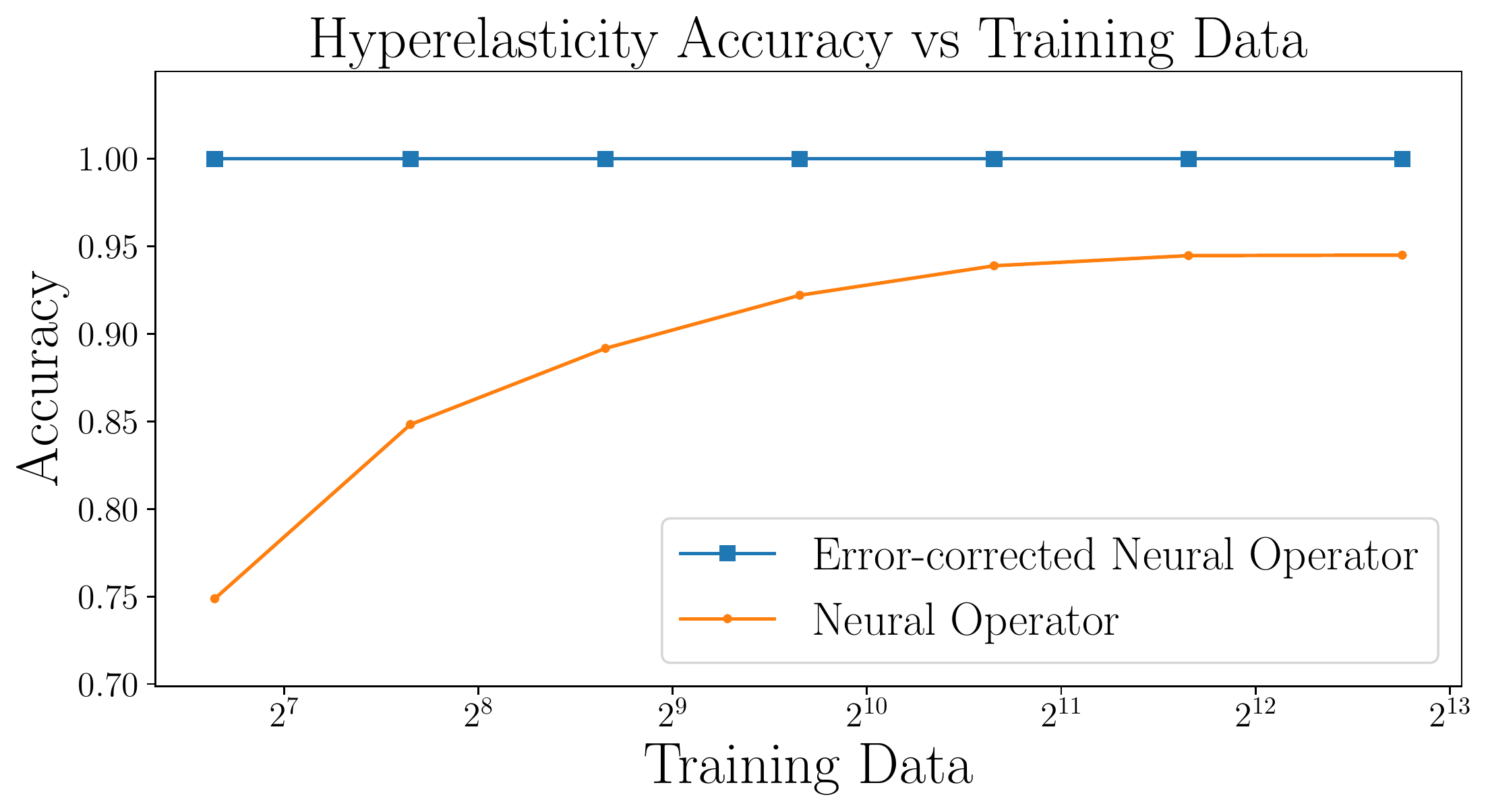}
\caption{A study of the generalization accuracy, as defined in~\eqref{eq:t_accuracy}, of $7$ neural operators trained using a varying number of training samples for the hyperelastic material deformation problem. The neural operators are constructed using a derivative-informed projected ResNet; see Section 5.1 for details on its construction and training. Generalization accuracy is computed using $512$ data unseen during training. An empirical accuracy ceiling of $\sim 95\%$ is reached for the given neural network architecture.
}
\label{fig:hyperelasticity_nns}
\end{figure}

Applying neural operator construction and training specified in Section~\ref{section:derivate_neural_operator} to the hyperelastic material deformation problem, we produced $7$ neural operators using a increasing number of training samples, $n_{\text{train}} = 100$, $201$, $403$, $806$, $1612$, $3225$, $6912$, assuming $p=2$ in the Bochner norm. In Figure~\ref{fig:hyperelasticity_nns}, the accuracy of the trained neural operators at different numbers of training samples is shown. The accuracy number is computed according to~\eqref{eq:t_accuracy} using $512$ samples from the prior distribution that are unseen during training. The accuracy ceiling around $95\%$ is reached at $n_{\text{train}} = 1612$, and a slight drop in accuracy is observed at $n_{\text{train}} = 6912$ samples.

For each trained neural operator, the accuracy for the error-corrected neural operator using the same 512 samples is also computed and shown in Figure~\ref{fig:hyperelasticity_nns}. The error-corrected neural operator mapping for all $7$ trained operators is close to $100\%$ accurate.

The visualization of absolution errors for the predictions by the best performing neural operator with $n_{\text{train}} = 3225$ and its error-corrected predictions at samples from the prior distribution is shown in Figure~\ref{fig:hyper_elasticity_prior}. We observe that the error correction step leads to a drop of maximum absolute pointwise error from the order of $10^{-3}$ to the order of $10^{-4}$. For prior samples that lead to small deformations, an additional order of magnitude drop in the maximum absolute pointwise error is observed.

\subsubsection{Bayesian inverse problem setting}\label{subsub:hyperelastic_inverse}
We consider a set of synthetic observation data $\boldsymbol{y}^*$ generated according to the data model in~\eqref{eq:data_model} for hyperelastic material deformation at a synthetic parameter field $m^*$ generated by a sum of three Gaussian bumps with different diagonal covariance and weights. This parameter corresponds to a synthetic Young's modulus field of $E^*$ that may represent a mixture of a small amount of spatially concentrated stiffer materials and a large amount of softer materials, e.g., cancer mass enclosed in healthy tissue. We visualize the synthetic parameter $m^*$, corresponding Young's modulus $E^*$, and displacement field $\boldsymbol{u}^* = \mathcal{F}^h(m^*)$ in Figure~\eqref{fig:hyperelastictity_setting}.

To complete the data model, we define an observation operator and a noise distribution. We, again, consider a linear observation operator that extracts discrete observations of the displacement vector at a uniform grid of $10\times 10$ points in the reference domain $\Omega_0$. This form of the observation operator is compatible with image analysis techniques such as digital image correlation; see, e.g., \cite{chevalier2001digital, moerman2009digital, MCCORMICK201052, jurjo2015analysis, li2017algorithm, ribeiro2017hybrid, flaschel2021unsupervised}. Let $\{\bX_j\}_{j=1}^{100}$ denote the observation points. Given a displacement field $\bu\in\mathcal{U}$, the observation operator $\bdmc{B}(\bu) \in \R^{2\times 100}$ returns local averages of the displacements around the observation points:
\begin{equation}
    \bdmc{B}(\bu) = \begin{bmatrix}
    \displaystyle |B_r(\bX_1)|^{-1}\int_{B_r(\bX_1)} \bu(\bX)\,\dd\bX &  \cdots & \displaystyle|B_r(\bX_{100})|^{-1}\int_{B_r(\bX_{100})} \bu(\bX)\,\dd\bX
    \end{bmatrix} \,.
\end{equation}
We assume the observation is corrupted with white noise, i.e., $\boldsymbol{N} \sim \mathcal{N}(\boldsymbol{0}, \sigma^2\boldsymbol{I})$, with a standard deviation of $\sigma = 0.0089$ that is $1\%$ of the maximum values in $\bdmc{B}(\bu^*)$.

\begin{figure}[H]
    \centering
    \addtolength{\tabcolsep}{-6pt} 
    \begin{tabular}{cccc}
        $m^*$ & $E^*$ & $\mathcal{F}^h(m^*)$ & $|\boldsymbol{y}^*|\in\R^{100}$ \\
        \includegraphics[width = 0.24\linewidth]{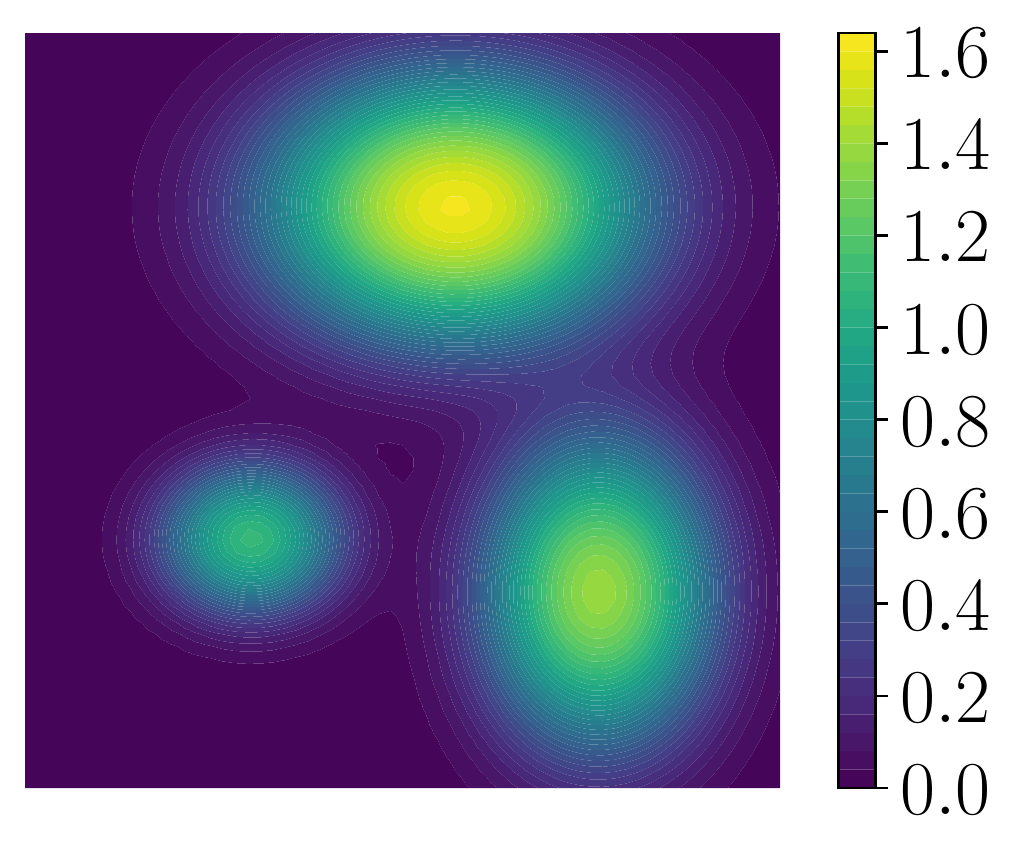} & \includegraphics[width = 0.24\linewidth]{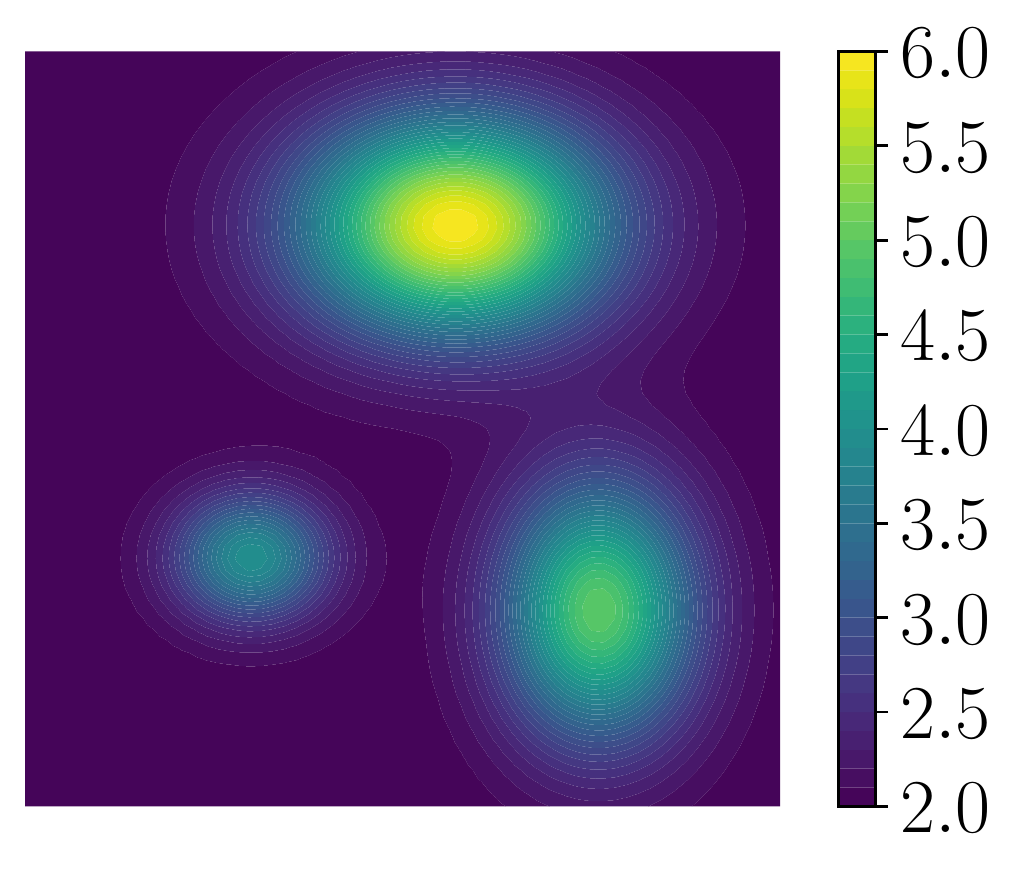} & \includegraphics[width = 0.24\linewidth]{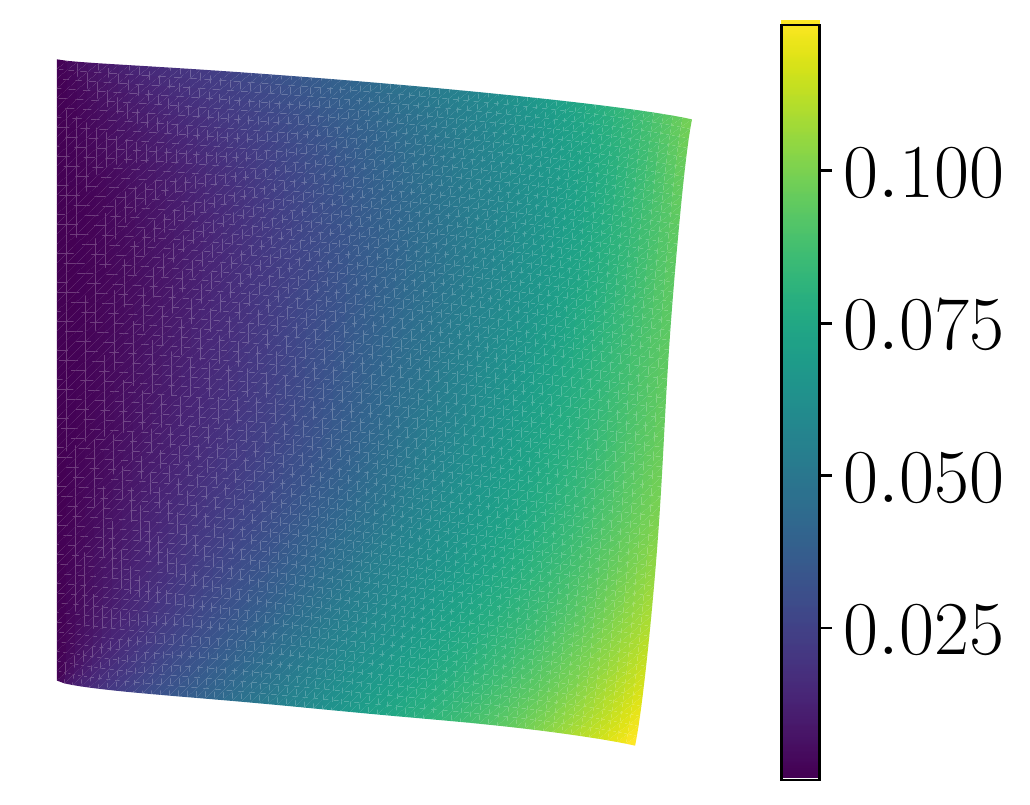} & \includegraphics[width = 0.24\linewidth]{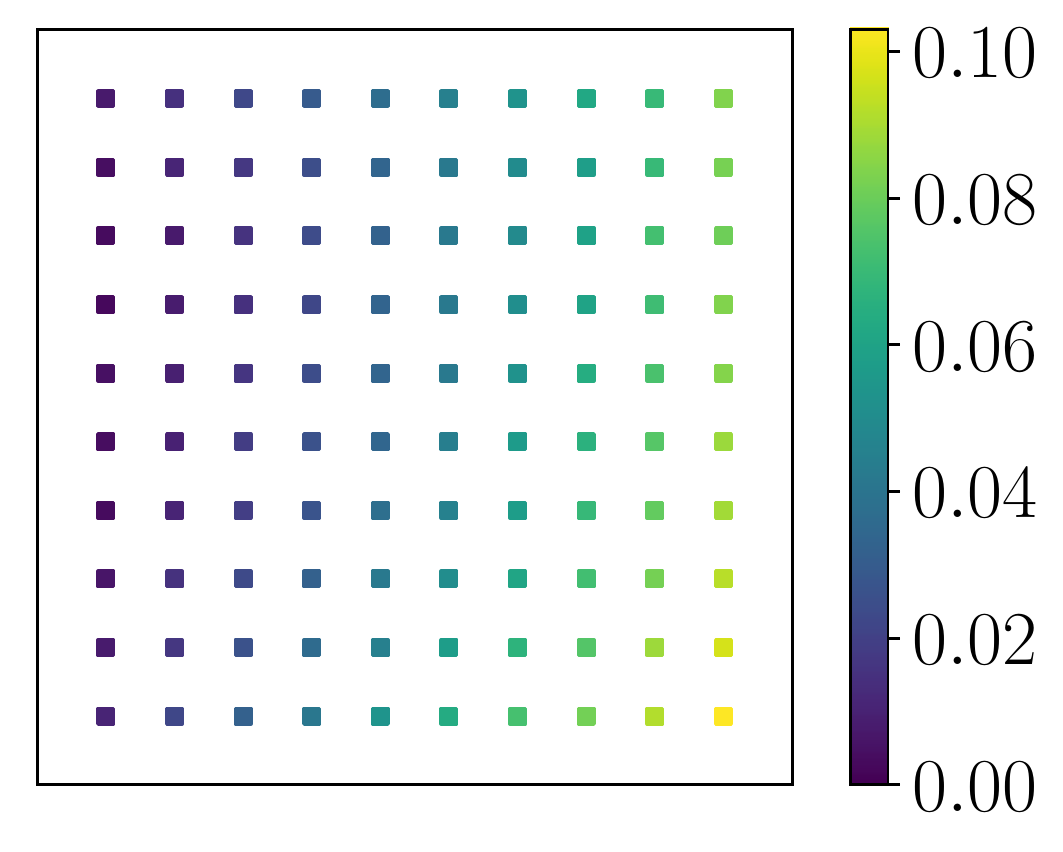}\\    
    \end{tabular}
    \addtolength{\tabcolsep}{6pt} 
    \caption{Visualization of the setting for a synthetic Bayesian inverse problem based on the hyperelastic material deformation introduced in Section~\ref{subsub:hyperelastic_inverse}. From left to right, we have (i) the synthetic parameter field, $m^*$, defined as the sum of three Gaussian bumps with different weights and diagonal covariances, (ii) the corresponding synthetic Young's modulus field, $E^*$, (iii) the finite element solution at $m^*$, $\mathcal{F}^h(m^*)$, (iv) the synthetic observed data, $\by^*$, extracted from locally averaged values of $\mathcal{F}^h(m^*)$ at a $10\times10$ grid of observation points, corrupted by a randomly sampled additive white noise with a standard deviation of $0.0089$.}
    \label{fig:hyperelastictity_setting}
\end{figure}

\subsubsection{Posterior visualization and cost analysis}
To visualize and compare posterior distributions defined with likelihood functions evaluated using the model via the finite element method, neural operators, and error-corrected neural operators, we generate samples from these posterior distributions via the pCN algorithms introduced in Section 2.3, and visualize the posterior predictive mean of Young's modulus by sample average approximation,
\begin{equation}\label{eq:hyper_pred_mean}
    E_{\text{mean}} \approx \frac{1}{n_{\text{post}}}\sum_{j=1}^{n_{\text{post}}} (\exp(m_j) + 1)\,,
\end{equation}
where $\{m_j\in\mathcal{M}^h\}_{j=1}^{n_{\text{post}}}$ are the posterior samples. For each posterior distributions, $8$ MCMC chains are constructed with a mixing parameter of $\beta_{\text{pCN}} = 0.05$ and collect, in total, posterior samples of $n_{\text{post}} = 120,000$. While mixing of the chains is rapid, a conservative burn-in rate of $25\%$ is used. The average sample acceptance ratio for MCMC sampling using the model is around $10\%$.

In Figure~\ref{fig:hyper_no_mean}, we visualize the model generated posterior predictive mean estimate of Young's modulus alongside the ones generated by the three best-performing neural operators. We observe that the estimate by the neural operator with $n_{\text{train}} = 1612$ completely misplaced two of the bumps clearly visible in the model estimate. A doubling of training data and a small increase of accuracy from $n_{\text{train}} = 1612$ to $n_{\text{train}} = 3225$ leads to a visually significant increase in the accuracy of the neural operator estimate. The neural operator with $n_{\text{train}} = 3225$ is able to qualitatively recover most features in the model estimate, with some difficulty in capturing the shape of the top bump and the location of the lower left bump. However, a doubling of training data from $n_{\text{train}} = 3225$ to $n_{\text{train}} = 6912$ in an attempt to further improve this result leads to a slight drop in the accuracy of the neural operator and a significant drop in accuracy of the estimate. This observation reflects the unreliability of approximation error reduction of neural operators and thus the unreliability of the surrogate modeling approach for inverse problems based solely on trained neural operators.

In Figure~\ref{fig:hyper_correction_mean}, we provide the same visualization for the estimates produced by the same neural operators but with error correction. We observe that they generate estimates that are qualitatively much similar to the model generated estimate, where all three bumps are captured with positions and magnitudes matching that of the model. These estimates are relatively consistent across the three neural operators, even though the three neural operators show drastically different accuracy in their estimates, demonstrating the robustness of our approach for error reduction of neural operators in Bayesian inverse problems.

\begin{figure}[H]
    \centering
    \addtolength{\tabcolsep}{-6pt} 
    \begin{tabular}{cccc}
    \textbf{Model $\mathcal{F}^h$} & \multicolumn{3}{c}{\textbf{Neural operator $\operator_{\boldsymbol{w}}$}}\\
     &   $n_{\text{train}} = 1612$ & $n_{\text{train}} = 3225$ & $n_{\text{train}} = 6912$  \\
    \includegraphics[width = 0.24\linewidth]{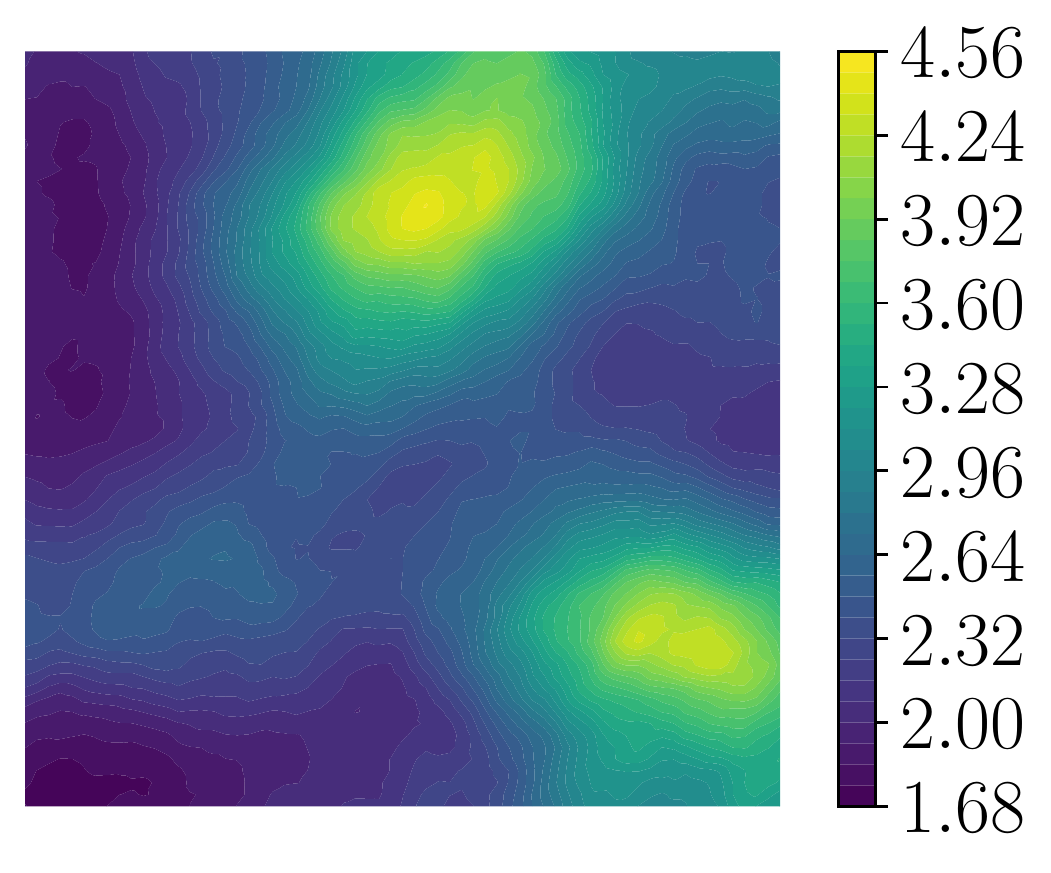} & \includegraphics[width = 0.23\linewidth]{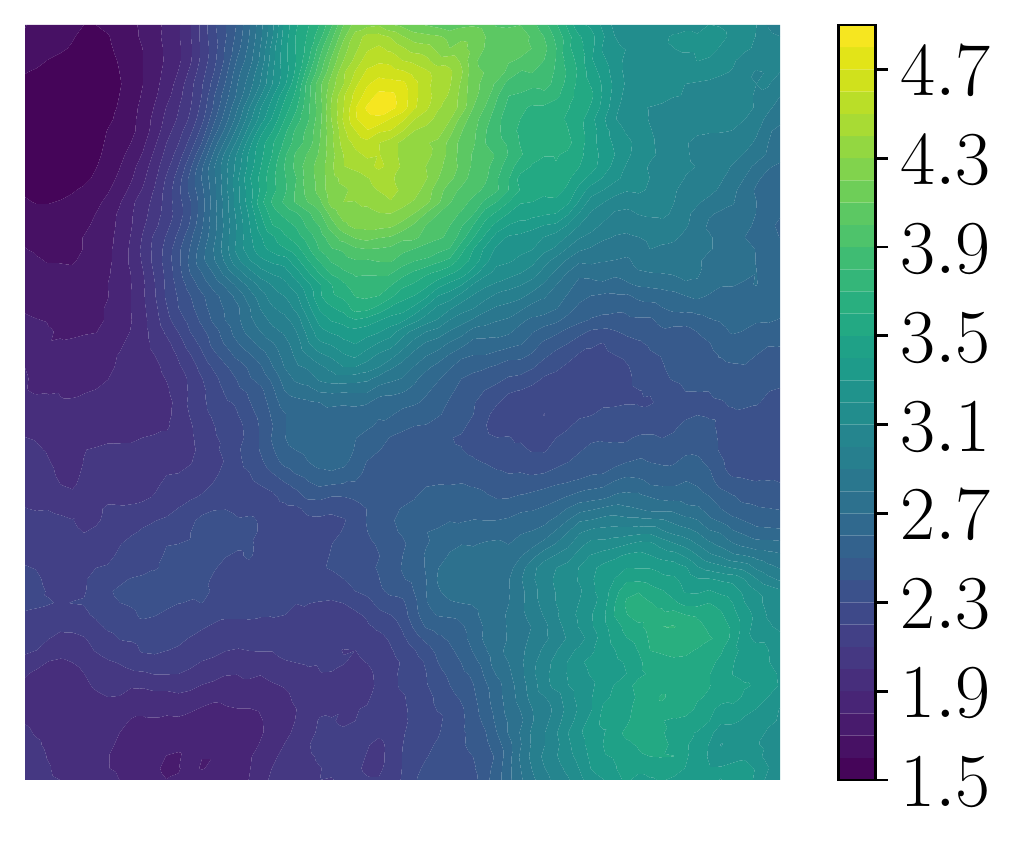} & \includegraphics[width = 0.24\linewidth]{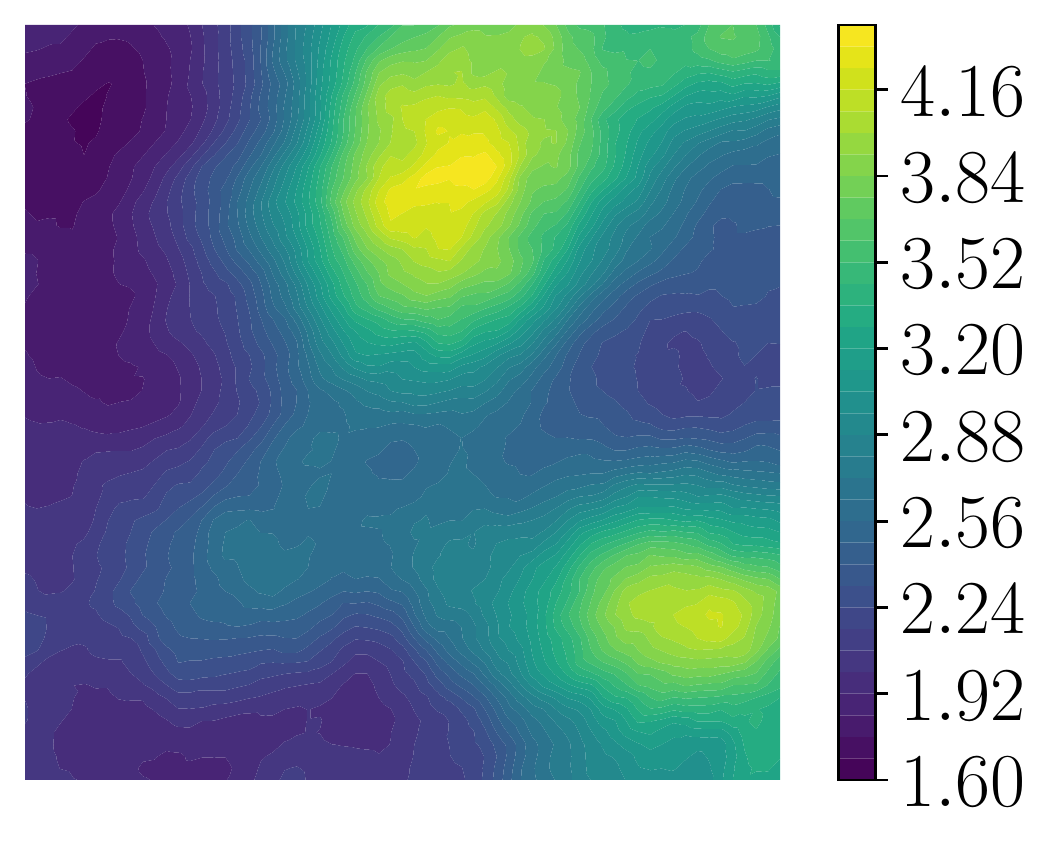} &  \includegraphics[width = 0.24\linewidth]{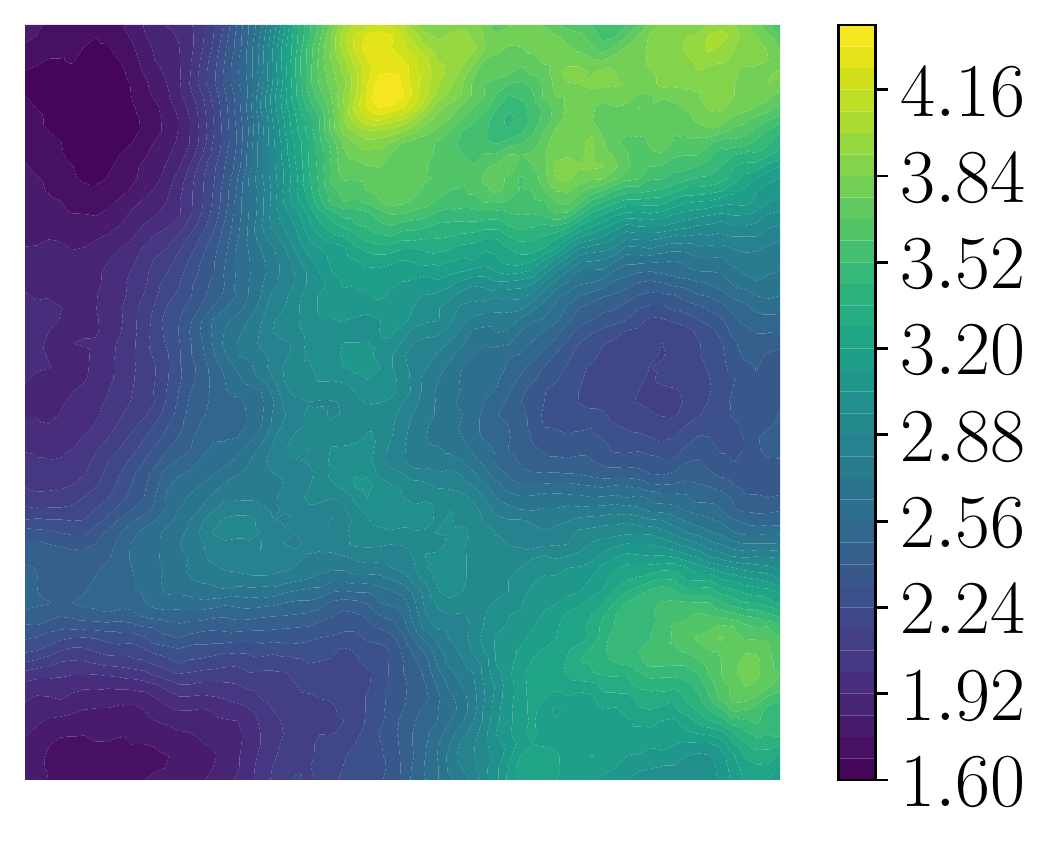}\\
    \end{tabular}
    \addtolength{\tabcolsep}{6pt} 
    \caption{Visualization of the posterior predictive estimates of Young's modulus field in~\eqref{eq:hyper_pred_mean} for a synthetic Bayesian inverse problem introduced in Section~\ref{subsub:hyperelastic_inverse}. From left to right, we have the estimates by (i) the model via the finite element method, (2) the neural operator trained with $n_{\text{train}} = 1612$, (3) the neural operator trained with $n_{\text{train}} = 3225$, (4) and the neural operator trained with $n_{\text{train}} = 6912$.}
    \label{fig:hyper_no_mean}
\end{figure}

\begin{figure}[H]
    \centering
    \addtolength{\tabcolsep}{-6pt} 
    \begin{tabular}{cccc}
    \textbf{Model $\mathcal{F}^h$} & \multicolumn{3}{c}{\textbf{Neural operator with error correction $\operator_{C}$}}\\
     &   $n_{\text{train}} = 1612$ & $n_{\text{train}} = 3225$ & $n_{\text{train}} = 6912$  \\
    \includegraphics[width = 0.24\linewidth]{model_mean_modulus.pdf} & \includegraphics[width = 0.24\linewidth]{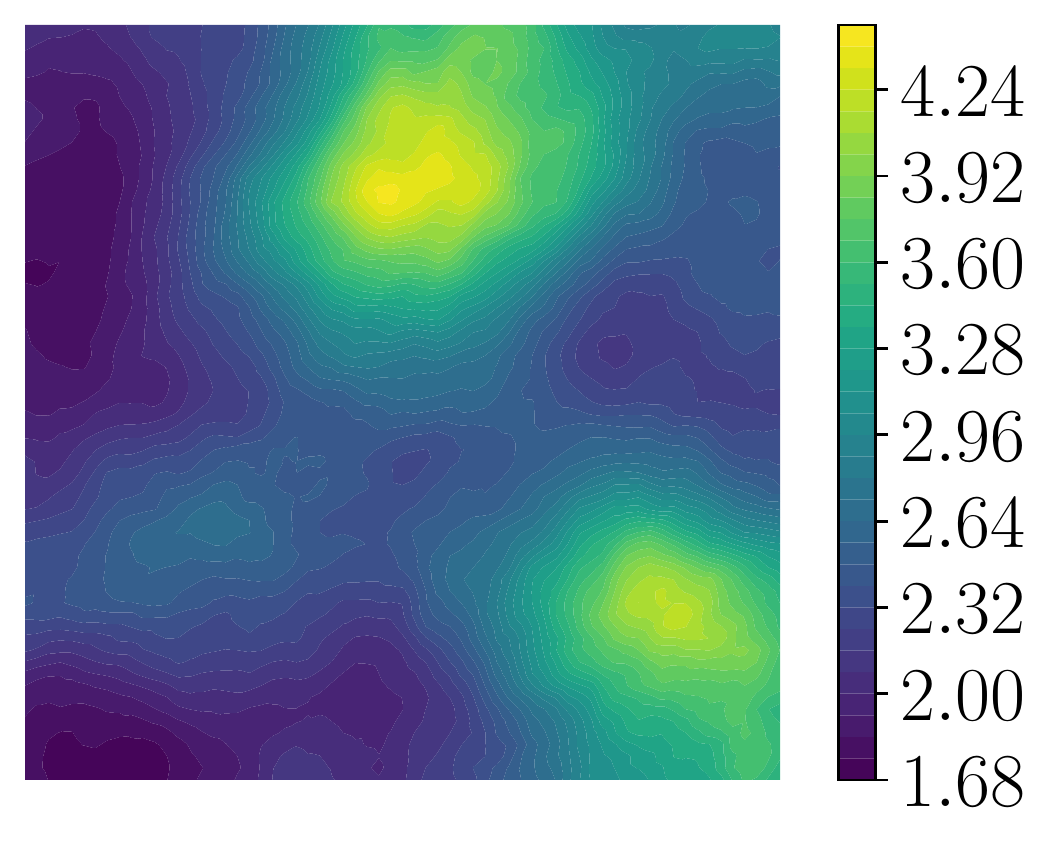} & \includegraphics[width = 0.24\linewidth]{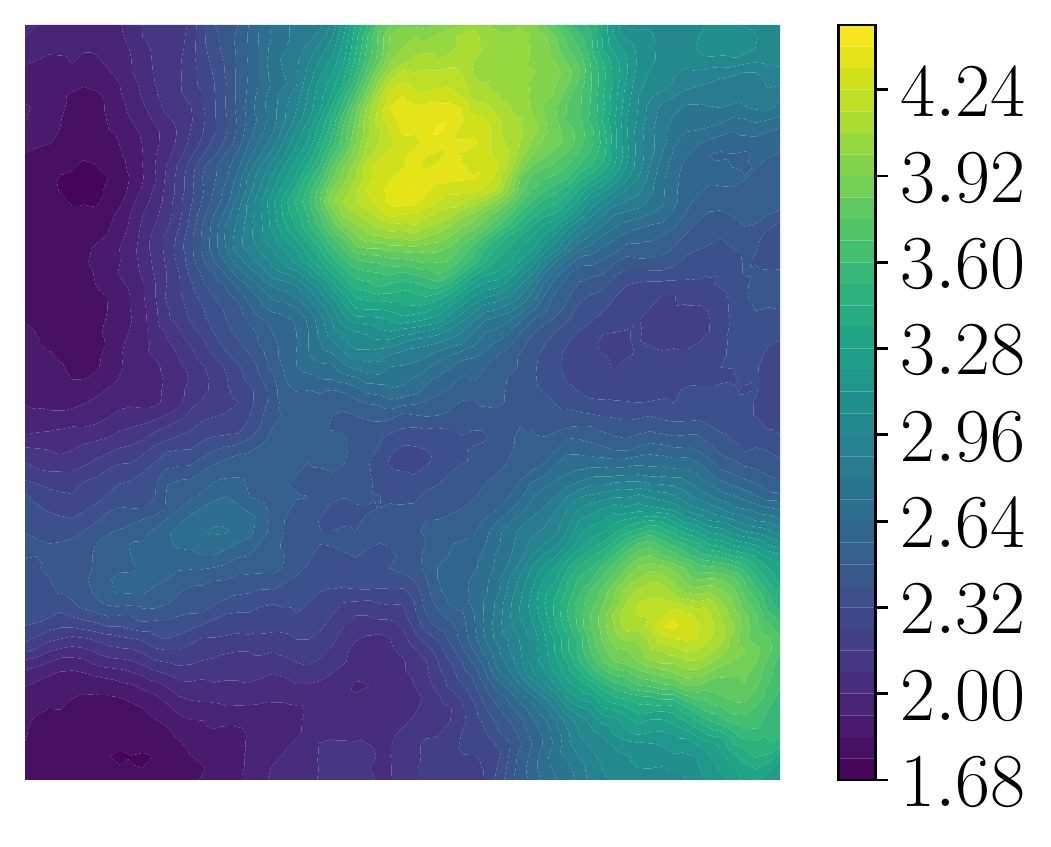} & \includegraphics[width = 0.24\linewidth]{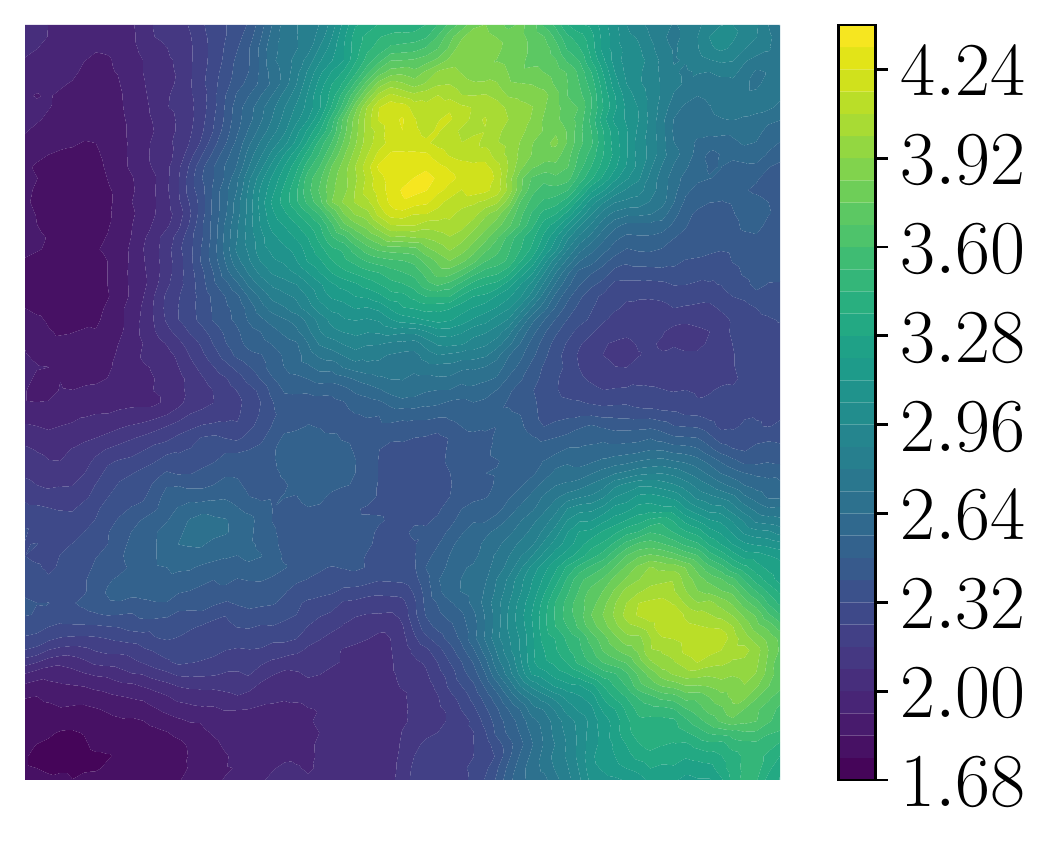} \\
    \end{tabular}
    \addtolength{\tabcolsep}{6pt} 
    \caption{Visualization of posterior the predictive mean estimates of Young's modulus field in~\eqref{eq:hyper_pred_mean} for a synthetic Bayesian inverse problem introduced in Section~\ref{subsub:hyperelastic_inverse}. From left to right, we have the estimates by (i) the model via the finite element method, (2) the neural operator trained with $n_{\text{train}} = 1612$ with error correction, (3) the neural operator trained with $n_{\text{train}} = 3225$ with error correction, (4) and the neural operator trained with $n_{\text{train}} = 6912$ with error correction.}
    \label{fig:hyper_correction_mean}
\end{figure}

In Figure~\ref{fig:hyperelasticity_speedup}, we visualize the observed and asymptotic speedups for the posterior sampling using the $7$ trained neural operators with or without the error correction. Similarly to the computational cost analysis for the reaction--diffusion problem, the asymptotic speedups here assume $n_{\text{chain}}\to\infty$, and the offline cost of the neural operator construction and training are neglected. The asymptotic speedup of the error-corrected neural operators is about one order of magnitude, which is the number of Newton iterations for solving the nonlinear problem, averaged over the posterior distribution. The asymptotic speedup of the neural operators is over two orders of magnitude. The observed speedups additionally account for the offline cost of reduced basis approximation, training data generation, and optimization, and included a finite $n_{\text{chain}}$ used for the posterior sampling presented above. The resulting observed speedups for the neural operators decay substantially as the number of training data increases, and nearly drop an order of magnitude from $n_\text{train} = 100$ to $n_{\text{train}} = 6912$. The observed speedup at the latter is in the same order as the asymptotic speedup of the error-correction neural operators. A similar decay of the observed speedups is seen for the error-corrected neural operators, but the dominant cost remains the cost of solving the linear systems associated with the error-correction steps.

\begin{figure}[H]
\center
\includegraphics[width = 0.9\textwidth]{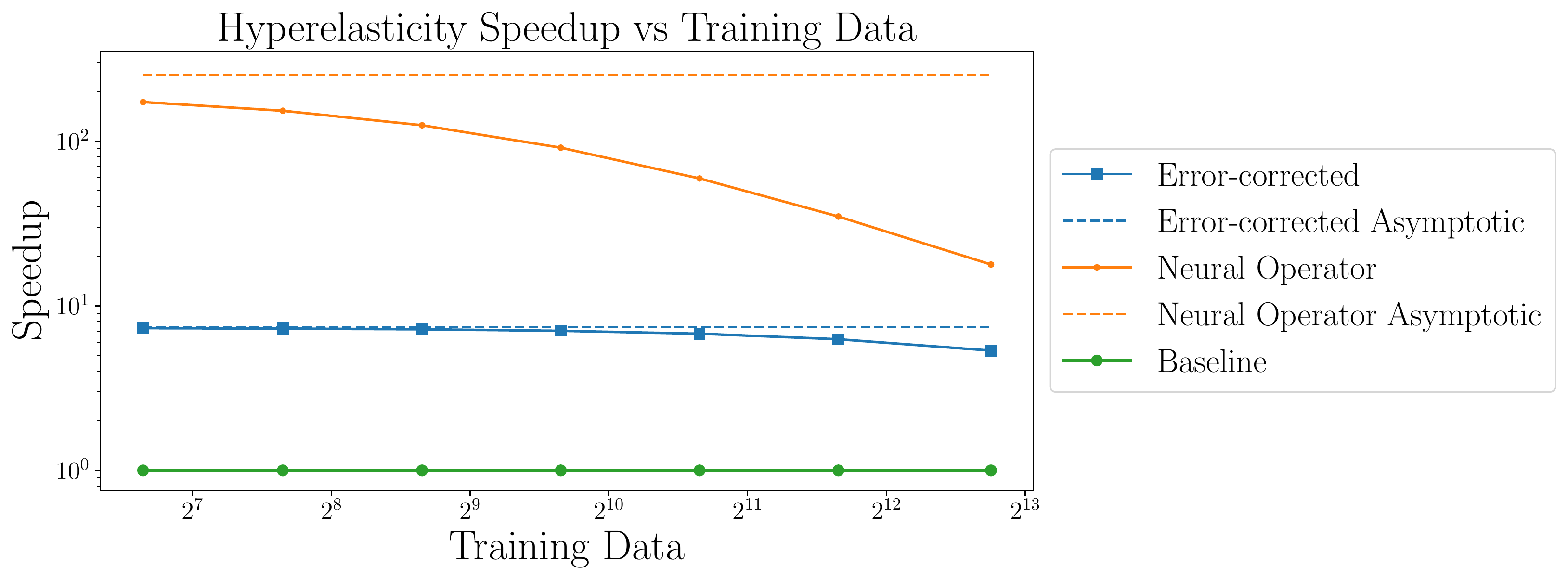}

\caption{The observed and asymptotic speedups, as defined in Section 4.4, for the posterior sampling via pCN for both the neural operators and the error-corrected neural operators. The asymptotic speedups assume $n_{\text{chain}}\to\infty$, thus neglecting the offline cost of neural operator construction and training. The observed speedups additionally account for the offline costs and the total number of iterative solves within generated Markov chains used for the posterior visualization in Figure~\ref{fig:hyper_no_mean} and~\ref{fig:hyper_correction_mean}.}
\label{fig:hyperelasticity_speedup}
\end{figure}

\section{Conclusion and Outlook}

In this work, we presented a residual-based error correction strategy for the reliable deployment of trained neural operators as a surrogate of the parameter-to-state map defined by nonlinear PDEs for accelerating infinite-dimensional Bayesian inverse problems. The strategy is motivated by the \textit{a posterior} error estimation techniques applied to estimating modeling error. For a trained neural operator, we utilized its prediction at a given parameter to formulate and solve a linear variational problem, or an error-correction problem, based on the PDE residual and its derivative. The resulting solution can lead to a quadratic reduction of local error due to the equivalency of solving the error-correction problem and generation one Newton step under some mild conditions. We show that this can be extended to a global reduction, i.e., over the prior distribution, of approximation errors for well-trained neural operators.

The proposed strategy addresses an important issue facing the application of operator learning using neural networks, which is the unreliability of neural operator performance improvement or approximation error reduction via training. Through deriving an \textit{a priori} error bound, we demonstrate that the approximation error of trained neural operators controls the error in the posterior distributions of Bayesian inverse problems when they are used as surrogate parameter-to-state maps in likelihood functions. Furthermore, the resulting error in the posterior distributions may be significantly magnified for Bayesian inverse problems that have uninformative prior distributions, high-dimensional data, small noise corruption, and inadequate models. In situations where the approximation error of a neural operator is persistent and not easily reduced to an acceptable level for the target Bayesian inverse problem via training, our proposed strategy offers an effective alternative of constructing error-corrected neural operators for achieving these accuracy requirements, all while retaining substantial computational speedups by leveraging the predictability of trained neural operators. For models governed by large-scale highly nonlinear PDEs, where the costs of evaluating trained neural operators are relatively negligible, this strategy provides a great computational speedup for posterior characterization, which is approximately the expected number of iterative linear solves within a nonlinear PDE solve at parameters sampled from the posterior distribution,

We demonstrate the advantages of our proposed strategy through two numerical examples: inference of an uncertain coefficient field in an equilibrium nonlinear reaction--diffusion equation and hyperelastic material properties discovery. For both problems, the performance of trained neural operators shows diminishing improvement from $<80\%$ accuracy with a small number of training samples to their empirical accuracy ceilings of $<95\%$ with a much larger number of training samples, while the performance of error-corrected neural operators is consistent with near $100\%$ accuracy. The visualization of the posterior predictive mean estimates suggests that trained neural operators alone as surrogates of the parameter-to-state map cannot reliably recover distinctive features of the physical parameters that are shown to be retrievable via inference with full-scale model solves, while the error-corrected neural operators as surrogates show consistency in its ability to recover these features.

Many other important outer-loop problems in engineering, sciences, and medicine, such as infinite-dimensional optimal control and design problems, can also benefit from fast and accurate full-state predictions using error-corrected neural operators. Moreover, many of these outer-loop problems, including Bayesian inverse problems, are based on physical systems modeled by nonlinear time-evolving PDEs. Although the combination of residual-based error correction and neural operators is generally applicable to these outer-loop problems and models, thorough theoretical and numerical investigations are needed to understand its range of applications and limitations in these settings.

From our theoretical and numerical analysis of the computational speedups of error-corrected neural operators, it is clear that the dominant computational cost of the proposed strategy is the accumulated cost of solving the full linear error correction problem at each neural operator prediction. This burden can also be further mitigated through either incorporating approximation techniques such as model order reduction or developing problem-specific fast solvers for the error correction problem. Other possible applications of the error-corrected neural operators are in multilevel or multifidelity methods for outer-loop problems.

\section*{Acknowledgement}
The work was supported by the U.S. Department of Energy, Office of Science, Office of Advanced Scientific Computing Research, Mathematical Multifaceted Integrated Capability Centers (MMICCs) program under Award DE-SC0019303. The authors would like to thank Peng Chen, Jiaqi Li, and Barbara Wohlmuth for technical suggestions that helped improve this work. The authors would like to thank Michael B. Giles for detailed review of the manuscript and pointing out mistakes in the original pre-print version.

\addcontentsline{toc}{section}{References}
\bibliographystyle{model1-num-names}
\biboptions{sort,numbers,comma,compress}                 
\bibliography{main.bib}

\appendix

\section{The full statement and proof of Theorem 1}
\newtheorem{innercustomthm}{Theorem}
\newenvironment{customthm}[1]
  {\renewcommand\theinnercustomthm{#1}\innercustomthm}
  {\endinnercustomthm}

\begin{customthm}{1}[Operator learning errors in Bayesian inverse problems]
Assume $\mathcal{U}$ and $\mathcal{M}$ are real-valued separable Hilbert spaces equipped with inner product--induced norm $\norm{\cdot}_{\mathcal{U}}$ and $\norm{\cdot}_{\mathcal{M}}$. Let $\nu_M$ be a probability measure on $\mathcal{M}$, and $\mathcal{F},\operator\in L^p(\mathcal{M},\nu_M;\mathcal{U})$, $p\in[2, \infty]$, where the Bochner space is equipped with the norm
\begin{equation*}
    \norm{\mathcal{G}}_{L^p(\mathcal{M},\nu_M;\mathcal{U})} = 
    \begin{cases}
        \left(\mathbb{E}_{M\sim\nu_M} \left[ \norm{\mathcal{G}(M)}^p_{\mathcal{U}}\right]\right)^{1/p}\, & p\in[1,\infty)\,\\
        \esssup_{M\sim\nu_M} \norm{\mathcal{G}(M)}_{\mathcal{U}}\, & p=\infty\,.
    \end{cases}
\end{equation*}
Assume the observation operator $\bdmc{B}:\mathcal{U}\to\R^{n_y}$ satisfies
\begin{equation*}
\begin{gathered}
    \norm{(\bdmc{B}\circ\mathcal{F})(m)}_2\leq c_B\norm{\mathcal{F}(m)}_{\mathcal{U}}\,,\quad \norm{(\bdmc{B}\circ\operator)(m)}_2\leq \widetilde{c_B}\norm{\mathcal{F}(m)}_{\mathcal{U}}\,\quad\nu_M\text{-a.e.}\,,\\
    \norm{(\bdmc{B}\circ\mathcal{F})(m) - (\bdmc{B}\circ\operator)(m)}_2\leq c_L\norm{\mathcal{F}(m) -\operator(m)}_{\mathcal{U}}\,\quad\nu_M\text{-a.e.}\,.
\end{gathered}
\end{equation*}
Assume the additive noise $\R^{n_y}\ni\boldsymbol{N}\sim\nu_{\boldsymbol{N}} = \mathcal{N}(\boldsymbol{0},\boldsymbol{C}_{\boldsymbol{N}})$ is normally distributed with a probability density function $\pi_{\boldsymbol{N}}(\bx) = ((2\pi)^{n_y}\det(\bC_{\bN}))^{-1/2}\exp(-\frac{1}{2}\bx^T\bC_{\bN}\bx)$, $\bx\in\R^{n_y}$.

For a set of observed data $\boldsymbol{y}^*\in\R^{n_y}$, we have

\begin{equation*}
    \mathcal{E}_{\text{post}}\leq c\norm{\mathcal{E}}_{L^p(\mathcal{M},\nu_M;\mathcal{U})}\,,\quad c >0\,,
\end{equation*}
where $\mathcal{E}_{\text{post}}$ is the Kullback--Leibler divergence between the posterior distribution defined by $\mathcal{F}$ and $\operator$.
\begin{equation*}
    \mathcal{E}_{\text{post}} \coloneqq D_{KL}(\widetilde{\nu_{M|\boldsymbol{Y}}}(\cdot|\boldsymbol{y}^*)||\nu_{M|\boldsymbol{Y}}(\cdot|\boldsymbol{y}^*))\,,\quad
    \mathcal{E}(m) \coloneqq \mathcal{F}(m) -\operator(m)\quad \nu_M\text{-a.e.}\,,
\end{equation*}
and the divergence is governed by Bayes' rule
\begin{align*}
    \frac{\dd\nu_{M|\bY}(\cdot|\by^*)}{\dd\nu_{M}}(m) &= \frac{1}{Z(\by^*)}\underbrace{\pi_{\noise}\left(\by^*-(\bdmc{B}\circ\mathcal{F})(m)\right)}_{\eqqcolon\mathcal{L}(m;\boldsymbol{y}^*)}\quad\as\,,\quad Z(\boldsymbol{y}^*) = \mathbb{E}_{M\sim \nu_M}[\mathcal{L}(M;\by^*)]\,,\\
    \frac{\dd\widetilde{\nu_{M|\bY}}(\cdot|\by^*)}{\dd\nu_{M}}(m) &= \frac{1}{\widetilde{Z}(\by^*)}\underbrace{\pi_{\noise}\left(\by^*-(\bdmc{B}\circ\operator)(m)\right)}_{\eqqcolon\widetilde{\mathcal{L}}(m;\boldsymbol{y}^*)}\quad\as\,,\quad \widetilde{Z}(\boldsymbol{y}^*) = \mathbb{E}_{M\sim \nu_M}[\widetilde{\mathcal{L}}(M;\by^*)]\,,
\end{align*}
where we assume $\post{\cdot}\sim\postt{\cdot}\sim\nu_M{\cdot}$, meaning that they are mutually absolutely continuous.
\end{customthm}

\begin{proof}
Let us first dissect $\mathcal{E}_{\text{post}}$ into two parts:
\begin{align*}
    \mathcal{E}_{\text{post}} &= \mathbb{E}_{M\sim\postt{\cdot}}\left[\ln\left(\frac{\dd \postt{\cdot}}{\dd\post{\cdot}}(M)\right)\right] && \text{(Def. of KL divergence)} \notag\\
    & = \mathbb{E}_{M\sim\nu_M}\left[\ln\left(\frac{\dd \postt{\cdot}}{\dd\post{\cdot}}(M)\right)\frac{1}{\widetilde{Z}(\by^*)}\widetilde{\mathcal{L}}(M;\by^*)\right] && \text{(Change of var. formula)} \notag\\
    & = \mathbb{E}_{M\sim\nu_M}\left[\ln\left(\frac{\dd \postt{\cdot}}{\dd\nu_M}(M)\frac{\dd\nu_M}{\dd\post{\cdot}}(M)\right)\frac{1}{\widetilde{Z}(\by^*)}\widetilde{\mathcal{L}}(M;\by^*)\right] && \text{(Mutually abs. continuous)}\notag\\
    & = \mathbb{E}_{M\sim\nu_M}\left[\ln\left(\frac{Z(\by^*)}{\widetilde{Z}(\by^*)}\frac{\widetilde{\mathcal{L}}(M;\by^*)}{\mathcal{L}(M;\by^*)}\right)\frac{1}{\widetilde{Z}(\by^*)}\widetilde{\mathcal{L}}(M;\by^*)\right] && \text{(Bayes' rule)}\notag\\
    & = \underbrace{\ln\left(\frac{Z(\by^*)}{\widetilde{Z}(\by^*)}\right)}_{\tcircle{A}} + \underbrace{\frac{1}{\widetilde{Z}(\by^*)}\mathbb{E}_{M\sim\nu_M}\left[\ln\left(\frac{\widetilde{\mathcal{L}}(M;\by^*)}{\mathcal{L}(M;\by^*)}\right)\widetilde{\mathcal{L}}(M;\by^*)\right]}_{\tcircle{B}}\,. && \text{(Def. of model evidence)}\notag
\end{align*}

The term involving the likelihoods can be bounded by
\begin{align*}
    \tcircle{B}  &= \frac{1}{\widetilde{Z}(\by^*)}\mathbb{E}_{M\sim\nu_M}\left[\left(\Phi(M)-\widetilde{\Phi}(M)\right)\widetilde{\mathcal{L}}(M;\by^*)\right]&& \left(\begin{cases}
        \Phi(m) = \frac{1}{2}\norm{\by^* - (\bdmc{B}\circ\mathcal{F})(m)}^2_{\bC_{\bN}^{-1}}\\
        \widetilde{\Phi}(m) = \frac{1}{2}\norm{\by^* - (\bdmc{B}\circ\operator)(m)}^2_{\bC_{\bN}^{-1}}
    \end{cases}\right)\\
    & \leq \frac{1}{\widetilde{Z}(\by^*)} \mathbb{E}_{M\sim\nu_M}\left[\left|\Phi(M)-\widetilde{\Phi}(M)\right|\widetilde{\mathcal{L}}(M;\by^*)\right]&& \text{(Jensen's ineq.)}\\
    & \leq \frac{\norm{\widetilde{\mathcal{L}}(\cdot;\by^*)}_{L^{q^*}(\mathcal{M,\nu_M})}}{\widetilde{Z}(\by^*)}\norm{\Phi-\widetilde{\Phi}}_{L^{p^*}(\mathcal{M},\nu_M)}\,. && \left(\text{H\"older's ineq., } q^* \text{ is the conj. exp. to } p^*\in[1,\infty]\right)
\end{align*}
We make several remarks on the last inequality. First, The validity of the inequality is conditional upon whether we can bound $\norm{\Phi-\widetilde{\Phi}}_{L^{p^*}(\mathcal{M},\nu_M)}$ for some $p^*\in[1,\infty]$. Second, if such a $p^*$ exists, the fraction term is larger than one if $p^*\in[1,\infty)$ and equal to one for $p^*=\infty$, as the following inequalities hold for any $q\in[1,\infty]$ due to inclusion of Lebesgue spaces defined on domains with finite measures ($\nu_M(\mathcal{M}) = 1$):
\begin{align*}
    \widetilde{Z}(\by^*) = \norm{\widetilde{\mathcal{L}}(\cdot;\by^*)}_{L^1(\mathcal{M},\nu_M)}\leq\norm{\widetilde{\mathcal{L}}(\cdot;\by^*)}_{L^{q}(\mathcal{M},\nu_M)}\leq \left((2\pi)^{n_y}\det(\bC_{\bN})\right)^{-1/2}\,.
\end{align*}
Now we seek to
\begin{enumerate}
    \item bound $\mathcal{E}_P(m)\coloneqq\Phi(m) - \widetilde{\Phi}(m)$, $\nu_M$-a.e., from above for some $p^*$, and
    \item bound $\widetilde{Z}(\by^*)$ from below.
\end{enumerate}
From now on, we assume $p\in[2,\infty)$ as the extensions to $p = \infty$ is straightforward.

First, we examine $\bdmc{B}\circ\mathcal{F}$, $\bdmc{B}\circ\operator$, and $\bdmc{E}_B(m)\coloneqq(\bdmc{B}\circ\mathcal{F})(m)-(\bdmc{B}\circ\operator)(m)$, $\nu_M$-a.e.,
\begin{align*}
    \norm{\bdmc{B}\circ\mathcal{F}}^p_{L^p(\mathcal{M},\nu_M;\R^{n_y})} &= \mathbb{E}_{M\sim \nu_M}\left[\norm{(\bdmc{B}\circ\mathcal{F})(M)}_2^p\right] &&\text{(Def. of Bochner space norm)}\\
    & \leq c_B^p\mathbb{E}_{M\sim \nu_M}\left[\norm{\mathcal{F}(M)}_\mathcal{U}^p\right] &&\text{(Property of $\bdmc{B}$)}\\
    & = c_B^p\norm{\mathcal{F}}^p_{L^p(\mathcal{M},\nu_M;\mathcal{U})}<\infty\,. &&\text{(Def. of Bochner space norm)}
\end{align*}
We thus have $\bdmc{B}\circ\mathcal{F},\bdmc{B}\circ\operator\in L^p(\mathcal{M},\nu_M;\R^{n_y})$. Similarly,
\begin{align*}
    \norm{\bdmc{E}_B}^p_{L^p(\mathcal{M},\nu_M;\R^{n_y})} &= \mathbb{E}_{M\sim \nu_M}\left[\norm{(\bdmc{B}\circ\mathcal{F})(M)-(\bdmc{B}\circ\operator)(M)}_2^p\right] &&\text{(Def. of Bochner space norm)}\\
    & \leq c_L^p\mathbb{E}_{M\sim \nu_M}\left[\norm{\mathcal{F}(M) - \operator(M)}_\mathcal{U}^p\right] &&\text{(Property of $\bdmc{B}$)}\\
    & = c_L^p\norm{\mathcal{E}}^p_{L^p(\mathcal{M},\nu_M;\mathcal{U})} \,. &&\text{(Def. of Bochner space norm)}
\end{align*}
Now we have
\begin{align*}
    \norm{\mathcal{E}_P}_{L^{p/2}(\mathcal{M},\nu_M)}^{p/2} &= \mathbb{E}_{M\sim\nu_M}\left[|\Phi(M) - \widetilde{\Phi}(M)|^{p/2}\right]\\
    & = \mathbb{E}_{M\sim\nu_M}\left[\left|\left(\frac{1}{2}(\bdmc{B}\circ\mathcal{F})(M) + \frac{1}{2}(\bdmc{B}\circ\operator_{\bw})(M) -\by^*\right)^T\boldsymbol{C}_{\boldsymbol{N}}^{-1}\bdmc{E}_B(M)\right|^{p/2}\right]\\
    &\leq \mathbb{E}_{M\sim\nu_M}\left[\norm{\frac{1}{2}\boldsymbol{C}_{\boldsymbol{N}}^{-1}\left((\bdmc{B}\circ\mathcal{F})(M) + (\bdmc{B}\circ\operator_{\bw})(M) -2\by^*\right)}_2^{p/2}\norm{\bdmc{E}_B(M)}_2^{p/2}\right]\\
    &\leq {\underbrace{\mathbb{E}_{M\sim\nu_M}\left[\norm{\frac{1}{2}\boldsymbol{C}_{\boldsymbol{N}}^{-1}((\bdmc{B}\circ\mathcal{F})(M) + (\bdmc{B}\circ\operator_{\bw})(M) -2\by^*)}_2^{p}\right]}_{\displaystyle c_1^p}}^{1/2}\mathbb{E}_{M\sim\nu_M}\left[\norm{\bdmc{E}_B(M)}_2^{p}\right]^{1/2}\,.
\end{align*}
Where we apply the Cauchy–Schwarz inequality for the two inequalities above. The first expectation above is bounded by
\begin{align*}
    c_1^p &\leq \frac{1}{2}\norm{\bC_{\bN}^{-1}}^p_{2}\left(\norm{\bdmc{B}\circ\mathcal{F}}_{L^{p}(\mathcal{M},\nu_M;\R^{n_y})} + \norm{\bdmc{B}\circ\operator}_{L^{p}(\mathcal{M},\nu_M;\R^{n_y})} + 2\norm{\by^*}_2\right)^p&& \text{(Minkowski ineq.)}\\
    &\leq \frac{1}{2}\norm{\bC_{\bN}^{-1}}^p_{2}\left(c_B\norm{\mathcal{F}}_{L^p(\mathcal{M},\nu_M;\mathcal{U})} + \widetilde{c_B}\norm{\operator}_{L^p(\mathcal{M},\nu_M;\mathcal{U})} + 2\norm{\by^*}_2\right)^p< \infty \,.
\end{align*}
Consequently
\begin{align*}
    \norm{\mathcal{E}_P}_{L^{p^*}(\mathcal{M},\nu_M)} &\leq c_1\norm{\bdmc{E}_B}_{L^p(\mathcal{M},\nu_M;\R^{n_y})}\leq c_1c_L\norm{\mathcal{E}}_{L^p(\mathcal{M},\nu_M;\mathcal{U})}<\infty\,,\quad p^*\in [1, p/2]\,.
\end{align*}
Second, we examine the lower bound for $\widetilde{Z}(\by^*)$:
\begin{align}
    c_2^{-1}\coloneqq\left((2\pi)^{n_y}\det(\bC_{\bN})\right)^{1/2}\widetilde{Z}(\by^*) &= \mathbb{E}_{M\sim\nu_M}\left[\exp(-\widetilde{\Phi}(M))\right] \notag\\
    &\geq \exp\left(-\mathbb{E}_{M\sim\nu_M}\left[\widetilde{\Phi}(M)\cdot 1\right]\right) && \text{(Jen. ineq.)} \notag\\
    & \geq \exp\left(-\frac{1}{2}\mathbb{E}_{M\sim\nu_M}\left[\norm{\by^* - (\bdmc{B}\circ\operator)(M)}_{\bC_{\bN}^{-1}}^p\right]^{1/p}\right) && \text{(H\"od. ineq.)} \notag\\
    & \geq \exp\left(- \frac{1}{2}\norm{\bC_{\bN}^{-1}}_{2}\left(\norm{\by^*}_2 + \norm{\bdmc{B}\circ\operator}_{L^{p}(\mathcal{M},\nu_M;\R^{n_y})}\right)\right)&& \text{(Min. ineq.)} \notag\\
    &\geq \exp\left(- \frac{1}{2}\norm{\bC_{\bN}^{-1}}_{2}\left(\norm{\by^*}_2 + \widetilde{c_B}\norm{\operator}_{L^{p}(\mathcal{M},\nu_M;\mathcal{U})}\right)\right)> 0\,. \label{eq:const_c2}
\end{align}
Therefore, for $q\in[p/(p-2), \infty]$ we have
\begin{align*}
    c_3\coloneqq\frac{\norm{\widetilde{\mathcal{L}}(\cdot;\by^*)}_{L^{q}(\mathcal{M,\nu_M})}}{\widetilde{Z}(\by^*)} \leq \frac{\norm{\widetilde{\mathcal{L}}(\cdot;\by^*)}_{L^{\infty}(\mathcal{M,\nu_M})}}{\widetilde{Z}(\by^*)}\leq c_2 < \infty\,.
\end{align*}
Consequently, the term \tcircle{B} is bounded from above as follows
\begin{equation*}
    \tcircle{B}\leq c_1c_3c_L\norm{\mathcal{E}}_{L^p(\mathcal{M},\nu_M;\mathcal{U})}\,.
\end{equation*}

Following, the term \tcircle{A} involving normalization constants can be bounded by
\begin{align*}
    \tcircle{A} & = \ln\left(1 + \frac{Z(\by^*)-\widetilde{Z}(\by^*)}{\widetilde{Z}(\by^*)}\right)\\
    & \leq \ln\left(1 + \frac{|Z(\by^*)-\widetilde{Z}(\by^*)|}{\widetilde{Z}(\by^*)}\right) && \left(\ln(\cdot) \text{ monotonic incres.}\right)\\
    &\leq \frac{1}{\widetilde{Z}(\by^*)}\left|Z(\by^*) - \widetilde{Z}(\by^*)\right|&& \left(\log(1 + x)\leq x\quad \forall x\geq0\right)\\
    &\leq \frac{1}{\left((2\pi)^{n_y}\det(\bC_{\bN})\right)^{1/2}\widetilde{Z}(\by^*)}\underbrace{\left|\mathbb{E}_{M\sim\nu_M}\left[\exp(-\Phi(m))-\exp(-\widetilde{\Phi}(m))\right]\right|}_{\tcircle{D}} \,.
\end{align*}
Next, we estimate $\tcircle{D}$ as follows
\begin{align*}
    \tcircle{D} &\leq \mathbb{E}_{M\sim \nu_M}\left[|\exp(-\Phi(m))-\exp(-\widetilde{\Phi}(m)|\right] && (\text{Jensen's ineq.})\\
    &\leq \mathbb{E}_{M\sim \nu_M}\left[|\mathcal{E}_P\cdot 1|\right] && (|e^{-x_1} - e^{-x_2}|\leq|x_1-x_2|,\, x_1,x_2\geq0)\\
    &\leq \norm{\mathcal{E}_P}_{L^{p/2}(\mathcal{M},\nu_M)} && \text{(H\"older's ineq.)}\\
    &\leq c_1c_L\norm{\mathcal{E}}_{L^p(\mathcal{M},\nu_M;\mathcal{U})}\,.
\end{align*}
Combining the two inequalities above and noting the definition of a constant $c_2$ in \eqref{eq:const_c2}, we have shown
\begin{equation*}
    \tcircle{A} \leq c_1c_2c_L\norm{\mathcal{E}}_{L^p(\mathcal{M},\nu_M;\mathcal{U})}\,.
\end{equation*}
In conclusion, we have
\begin{equation*}
    \mathcal{E}_{\text{post}} \leq c_1(c_2+c_3)c_L\norm{\mathcal{E}}_{L^p(\mathcal{M},\nu_M;\mathcal{U})} \,.
\end{equation*}

\end{proof}

\section{The full statement of the corollary to the Newton--Kantorovich theorem}

We state the corollary to the Newton--Kantorovich theorem assuming the equivalency between solving the error correction problem in the solution set $\mathcal{V}_u$ and generating one Newton step in $\mathcal{U}_0$ with a given ``lifting" element $u_L\in\mathcal{V}_u$ for the nonlinear equation
\begin{equation*}
\text{Given } m\in\mathcal{M}, \text{ find } v\in \mathcal{U}_0 \text{ such that}\quad \widetilde{\mathcal{R}}(v,m) = 0\,,\quad\widetilde{\mathcal{R}}(v,m)\coloneqq\mathcal{R}(v + u_L,m)\quad\in\mathcal{U}_0^* \,.
\end{equation*}
We thus redefine the forward operator and the neural operator to $\mathcal{F}(\cdot) - u_L$ and $\widetilde{\mathcal{F}}_{\boldsymbol{w}}(\cdot)-u_L - \widetilde{u}^{\perp}$, where $\widetilde{u}^{\perp}\in\mathcal{U}_0^{\perp}$ represent $\widetilde{\mathcal{F}}_{\boldsymbol{w}}(\cdot)-u_L$ projected to $\mathcal{U}_0^{\perp}$. Additionally, we define the space of bounded linear operator between two Banach spaces $\mathcal{X}$ and $\mathcal{Y}$ as $B(\mathcal{X}, \mathcal{Y})$ equipped with the operator norm,
\begin{equation*}
    \norm{\mathcal{G}}_{B(\mathcal{X}, \mathcal{Y})} = \sup_{y\not = 0} \frac{\norm{\mathcal{G}(x)}_{\mathcal{Y}}}{\norm{x}_{\mathcal{X}}}\,.
\end{equation*}
The following result is a direct translation of the Newton--Kantorovich theorem in Banach spaces as stated by Ciarlet~\cite{Ciarlet2013} to our setting.

\begin{corollary}[Global error reduction for residual-based error correction]
Let $\mathcal{U}$ and $\mathcal{M}$ be real-valued separable Hilbert spaces equipped with inner products--induced norm $\norm{\cdot}_{\mathcal{U}}$ and $\norm{\cdot}_{\mathcal{M}}$. Let $\nu_M$ be a probability measure on $\mathcal{M}$ and $\operator\in L^{\infty}(\mathcal{M},\nu_M;\mathcal{U})$. Assume there exists $\mathcal{F}\in L^{\infty}(\mathcal{M},\nu_M;\mathcal{U})$ such that
\begin{equation}
    \mathcal{R}(\mathcal{F}(m),m) \equiv 0\,,\quad \nu_M\text{-a.e.}
\end{equation}
for $\mathcal{R}:\mathcal{U}\times\mathcal{M}\to\mathcal{U}^*$.
Assume for any $m\in\mathcal{M}$, $\nu_M$-a.e., there exists an open set $\mathcal{D}_m\subseteq\mathcal{U}$ with $\operator(m)\in \mathcal{D}_m$ such that $\mathcal{R}(\cdot, m):\mathcal{D}_m\to\mathcal{U}^*$ is differentiable and its derivative at $\operator(m)$, $\delta_u\mathcal{R}(\operator(m),m)\in B(\mathcal{U},\mathcal{U}^*)$, is bijective. Assume that there exists three constants $c_1$, $c_2$, $c_3$ such that
\begin{equation*}
    0<c_1c_2c_3\leq\frac{1}{2} \text{ and } B_r(\operator(m))\subset \mathcal{D}_m,\text{ where } r\coloneqq\frac{1}{c_2c_3}\,,
\end{equation*}
and
\begin{equation*}
    \begin{split}
        \norm{\delta_u \mathcal{R}(\operator(\cdot),\cdot)^{-1}\mathcal{R}(\operator(\cdot),\cdot)}_{L^{\infty}(\mathcal{M},\nu;\mathcal{U})}&\leq c_1\,,\\
        \quad\norm{\delta_u \mathcal{R}(\operator(\cdot),\cdot)^{-1}}_{L^{\infty}(\mathcal{M},\nu_M; B(\mathcal{U}^*, \mathcal{U}))}&\leq c_2\,,\\
        \norm{\delta_u \mathcal{R}(u_1,m) - \delta_u \mathcal{R}(u_2, m) }_{B(\mathcal{U}, \mathcal{U}^*)}&\leq c_3\norm{u_1-u_2}_{\mathcal{U}}\,,\quad\forall u_1, u_2\in B_r(\operator(m))\,\quad \nu_M\text{-a.e.}
    \end{split}
\end{equation*}
Then for any $m\in\mathcal{M}$, $\nu_M$-a.e., $\delta_u\mathcal{R}(u, m)$ is bijective at each $u\in B_r(\operator(m))$ and the sequence $\{u_j\}_{j=0}^{\infty}$ with $u_0=\operator(m)$ defined as
\begin{equation*}
    u_{j+1} = u_j - \delta_u\mathcal{R}(u_j,m)^{-1}\mathcal{R}(u_j,m)\,,\quad k\geq 0\,,
\end{equation*}
is contained in the ball $B_{r_-}(\operator(m))$, where
\begin{equation*}
    r_-\coloneqq \frac{1-\sqrt{1-2c_1c_2c_3}}{c_2c_3}\leq r\,,
\end{equation*}
and converges to $\mathcal{F}(m)$. Besides, for each $j\geq 0$,
\begin{equation*}
\begin{dcases}
    \norm{u_j-\mathcal{F}(\cdot)}_{\mathcal{L^\infty}(\mathcal{M},\nu_M;\mathcal{U})} \leq \frac{r}{2^{j}}(\frac{r_-}{r})^{2^{j}}\,, & \quad c_1 < \frac{1}{2c_2c_3}\,,\\
    \norm{u_j-\mathcal{F}(\cdot)}_{\mathcal{L^\infty}(\mathcal{M},\nu_M;\mathcal{U})} \leq \frac{r}{2^{j}}\,, &\quad c_1 = \frac{1}{2c_2c_3}\,.
\end{dcases}
\end{equation*}

\end{corollary}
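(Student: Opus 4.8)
The plan is to derive the corollary by applying the classical Newton--Kantorovich theorem in Banach spaces --- in the form stated by Ciarlet~\cite{Ciarlet2013} --- \emph{pointwise} in the parameter, i.e.\ for $\nu_M$-almost every $m\in\mathcal{M}$, and then to aggregate the resulting pointwise conclusions into statements in $L^{\infty}(\mathcal{M},\nu_M;\mathcal{U})$. The three displayed hypotheses are precisely the three ingredients of the classical theorem: $c_1$ bounds the first Newton increment $\delta_u\mathcal{R}(\operator(m),m)^{-1}\mathcal{R}(\operator(m),m)$, $c_2$ bounds the norm of the inverse Jacobian at the starting point $u_0=\operator(m)$ (a bounded operator by the bounded inverse theorem, since $\delta_u\mathcal{R}(\operator(m),m)$ is assumed bijective), and $c_3$ is the Lipschitz modulus of $u\mapsto\delta_u\mathcal{R}(u,m)$ on $B_r(\operator(m))$; the remaining hypotheses $0<c_1c_2c_3\le\tfrac12$ and $B_r(\operator(m))\subset\mathcal{D}_m$ with $r=1/(c_2c_3)$ constitute the Kantorovich smallness and inclusion condition.

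First I would fix such an $m$ and apply the theorem to $\mathcal{R}(\cdot,m)=0$ on $\mathcal{D}_m$ with starting point $\operator(m)$. This directly yields: (i) $\delta_u\mathcal{R}(u,m)$ is bijective for every $u\in B_r(\operator(m))$; (ii) the Newton sequence $\{u_j(m)\}_{j\ge0}$ is well defined and stays in $B_{r_-}(\operator(m))$ with $r_-=\bigl(1-\sqrt{1-2c_1c_2c_3}\bigr)/(c_2c_3)\le r$; (iii) the sequence converges to a zero $u^{\star}(m)$ of $\mathcal{R}(\cdot,m)$ in that ball; and (iv) the a posteriori rate bounds $\norm{u_j(m)-u^{\star}(m)}_{\mathcal{U}}\le \tfrac{r}{2^{j}}(r_-/r)^{2^{j}}$ in the strictly subcritical case $c_1c_2c_3<\tfrac12$, and $\norm{u_j(m)-u^{\star}(m)}_{\mathcal{U}}\le r/2^{j}$ in the critical case $c_1c_2c_3=\tfrac12$. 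Next I would identify the limit: since $\mathcal{F}(m)$ is a zero of $\mathcal{R}(\cdot,m)$, the uniqueness clause of Newton--Kantorovich (uniqueness of the zero in $B_{r_+}(\operator(m))$, $r_+=\bigl(1+\sqrt{1-2c_1c_2c_3}\bigr)/(c_2c_3)$) together with the standing assumption that~\eqref{eq:residual} has a unique solution gives $u^{\star}(m)=\mathcal{F}(m)$, so the rate bounds become bounds on $\norm{u_j(m)-\mathcal{F}(m)}_{\mathcal{U}}$.

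Finally I would pass from the pointwise estimates to the Bochner estimates. The radii $r$, $r_-$, $r_+$ and the quadratic-rate bounds depend only on $c_1,c_2,c_3$ and not on $m$, so every pointwise inequality above holds $\nu_M$-a.e.\ with an $m$-independent right-hand side; taking the essential supremum over $m\sim\nu_M$ produces exactly the stated bounds in $L^{\infty}(\mathcal{M},\nu_M;\mathcal{U})$. The $j=1$ instance corresponds to a single error-correction step and is the source of the quadratic-error-reduction estimate invoked in Section~\ref{ss:neural_op_corr}, after bounding the first Newton increment by $c_1\lesssim\norm{\mathcal{E}}$ via the mean value inequality applied to $\mathcal{R}(\operator(m),m)=\mathcal{R}(\operator(m),m)-\mathcal{R}(\mathcal{F}(m),m)$. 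A preliminary measurability check is also needed so the Bochner norms make sense: by induction, $u_0=\operator$ is strongly measurable by hypothesis, and $u_{j+1}$ is the composition of $u_j$ with the Carath\'eodory map $(u,m)\mapsto\mathcal{R}(u,m)$, the $B(\mathcal{U},\mathcal{U}^*)$-valued Carath\'eodory map $(u,m)\mapsto\delta_u\mathcal{R}(u,m)$, and operator inversion (continuous on the open set of invertible operators), hence strongly measurable.

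I expect the substance to be almost entirely bookkeeping: matching Ciarlet's formulation of Newton--Kantorovich --- typically phrased with a single combined constant --- to the split constants $(c_1,c_2,c_3)$ so that the ball radius is exactly $r_-=\bigl(1-\sqrt{1-2c_1c_2c_3}\bigr)/(c_2c_3)$ and the rate is exactly $\tfrac{r}{2^{j}}(r_-/r)^{2^{j}}$; confirming the limit-identification clause; and handling the measurability of the iterates carefully enough that the $\esssup$ aggregation is legitimate. The genuinely analytic content --- local bijectivity of the Jacobian, the three uniform bounds, and the smallness condition --- is assumed in the statement, so there is no hidden difficulty; the only real work is transferring a pointwise convergence result to an $L^{\infty}$-a.e.\ result with $m$-independent constants.
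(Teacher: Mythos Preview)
Your proposal is correct and matches the paper's approach exactly: the paper presents this corollary without proof, describing it simply as ``a direct translation of the Newton--Kantorovich theorem in Banach spaces as stated by Ciarlet~\cite{Ciarlet2013} to our setting,'' which is precisely the pointwise-application-plus-$L^\infty$-aggregation argument you outline. Your treatment of the limit identification via uniqueness and the measurability of the iterates goes beyond what the paper makes explicit, but is consistent with and fills in the details implied by that one-line justification.
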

\end{document}